\renewcommand\eqref[1]{(\ref{#1})} 
\def\A{{\mathcal A}}
\def\o{\omega}
\def\O{\Omega}
\def\th{\theta}
\def\N{\mathbb{N}}
\def\T{\mathbb{T}}
\def\V{\mathcal V}
\def\B{{\mathbb B}}
\def\H{\mathcal H}
\def\K{\mathcal K}
\def\R{\mathbb{R}}
\def\CC{{\mathfrak C}}
\def\W{{\mathcal W}}
\def\di{\diamond}
\def\e{{\sf e}}
\def\g{{\mathfrak g}}
\def\m{{\sf m}}
\def\wm{\widehat{\sf m}}
\def\({\left(}
\def\[{\left[}
\def\){\right)}
\def\]{\right]}
\def\si{\sigma}
\def\G{{\sf G}}
\def\wG{\widehat{\sf{G}}}
\def\p{\parallel}
\def\<{\langle}
\def\>{\rangle}
\providecommand{\CC}{\mathfrak{C}}
\def\fscr{\mathscr}
\newtheorem{Theorem}{Theorem}[section]
\newtheorem{Remark}[Theorem]{Remark}
\newtheorem{Lemma}[Theorem]{Lemma}
\newtheorem{Corollary}[Theorem]{Corollary}
\newtheorem{Proposition}[Theorem]{Proposition}
\newtheorem{Definition}[Theorem]{Definition}
\newtheorem{Example}[Theorem]{Example}
\numberwithin{equation}{section}
\begin{document}


\title{Pseudo-differential operators, Wigner transform\\ and Weyl systems on type I locally compact groups}

\date{\today}

\author{M. M\u antoiu and M. Ruzhansky \footnote{
\textbf{2010 Mathematics Subject Classification: Primary 46L65, 47G30, Secondary 22D10, 22D25.}
\newline
\textbf{Key Words:}  locally compact group, nilpotent Lie group, noncommutative Plancherel theorem, pseudo-differential operator, $C^*$-algebra, dynamical system.}
}
\date{\small}
\maketitle \vspace{-1cm}


\begin{abstract}
Let $\G$ be a unimodular type I second countable locally compact group and $\wG$ its unitary dual. We introduce and study a global pseudo-differential calculus for operator-valued symbols defined on $\G\times\wG$\,, and its relations to suitably defined Wigner transforms and Weyl systems. We also unveil its connections with crossed products $C^*$-algebras associated to certain $C^*$-dynamical systems, and apply it to the spectral analysis of covariant families of operators. Applications are given to nilpotent Lie groups, in which case we relate quantizations with operator-valued and scalar-valued symbols. 
\end{abstract}

\tableofcontents

\section{Introduction}\label{duci}

Let $\G$ be a locally compact group with unitary dual $\wG$\,, composed of classes of unitary equivalence of strongly continuous irreducible representations. To have a manageable Fourier transformation, it will be assumed second countable, unimodular and postliminal (type I). The formula 
\begin{equation}\label{fred}
\[{\sf Op}(a)u\]\!(x)=\int_\G\Big(\int_{\wG}{\rm Tr}_\xi\!\[\xi(y^{-1}x)a(x,\xi)\]\!d\wm(\xi)\Big)u(y)d\m(y)
\end{equation}
is our starting point for a global pseudo-differential calculus on $\G$\,; it involves operator-valued symbols defined on $\G\times\wG$\,.  In \eqref{fred} $d\m$ is the Haar measure of the group $\G$\,, $d\wm$ is the Plancherel measure on the space $\wG$ and for the pair  $(x,\xi)$ formed of an element $x$ of the group and a unitary irreducible representation $\xi:\G\rightarrow\mathbb B(\H_\xi)$\,, the symbol $a(x,\xi)$ is essentially assumed to be a trace-class operator in the representation Hilbert space $\H_\xi$\,.
In further extensions of the theory it is important to also include densely defined symbols to cover, for example, 
differential operators on Lie groups (in which case one can make sense of  \eqref{fred} for such $a(x,\xi)$ by letting it act on the dense in $\H_\xi$ subspace of smooth vectors of the representation $\xi$\,, see \cite{FR1}).

\medskip
Particular cases of \eqref{fred} have been previously initiated in \cite{RT,RT1} and then intensively developed further in \cite{DaR, DaR1,DR1,Fi,RTW,RW} for compact Lie groups, and in \cite{FR,FR1,FR2} for large classes of nilpotent Lie groups (graded Lie groups), as far-reaching versions of the usual Kohn-Nirenberg quantization on $\G=\mathbb R^n$\,, cf. \cite{Fo}\,. The idea to use the irreducible representation theory of a type I group in defining pseudo-differential operators seems to originate in \cite[Sect. I.2]{Ta}, but it has not been developed before in such a generality. All the articles cited above already contain historical discussions and references to the literature treating pseudo-differential operators (quantization) in group-like situations, so we are not going to try to put this subject in a larger perspective. 

\medskip
Let us just say that an approach involving pseudo-differential operators with representation-theoretical operator-valued symbols has the important privilege of being global. On most of the smooth manifolds there is no notion of full scalar-valued symbol for a pseudo-differential operator defined using local coordinates. This is unfortunately true even in the rather simple case of a compact Lie group, for which the local theory, only leading to a principal symbol, has been shown to be equivalent to the global operator-valued one (cf. \cite{RT,RTW}).  On the other hand, in the present article we are not going to rely on compactness, on the nice properties implied by nilpotency, not even on the smooth structure of a Lie group. In the category of type I second countable locally compact groups one has a good integration theory on $\G$ and a manageable integration theory on $\wG$\,, allowing a general form of the Plancherel theorem, and this is enough to develop the basic features of a quantization. Unimodularity has been assumed, for simplicity, but by using tools from \cite{DM} it might be possible to develop the theory without it.

\medskip
More structured cases (still more general than those studied before) will hopefully be analysed in the close future, having the present paper as a framework and a starting point. In particular, classes of symbols of H\"ormander type would need more than a smooth structure on $\G$\,. The smooth theory, still to be developed, seems technically difficult if the class of Lie groups is kept very general. Of course, only in this setting one could hope to cover differential operators and certain types of connected applications. On the other hand, the setting of our article allows studying multiplication and invariant operators as very particular cases, cf. Subsection \ref{fourtinitin}.

\medskip
The formula (\ref{fred}) is a generalisation of the Kohn-Nirenberg quantization rule for the particular case $\G=\mathbb R^n$. But for $\mathbb R^n$ there are also the so-called $\tau$-quantizations
\begin{equation*}\label{tao}
\[{\sf Op}^\tau\!(a)u\]\!(x)=\int_{\R^n}\!\Big(\int_{\R^n}a\big((1-\tau)x+\tau y,\eta\big)e^{i(x-y)\eta}d\eta\Big)u(y)dy\,,
\end{equation*} 
related to ordering issues, with $\tau\in[0,1]$\,, and the Kohn-Nirenberg quantization is its special case for $\tau=0$.
It is possible to provide extensions of the pseudo-differential calculus on type I groups corresponding to any measurable function $\tau:\G\rightarrow\G$\,. The general formula turns out to be
\begin{equation}\label{bilfred}
\[{\sf Op}^\tau\!(a)u\]\!(x)=\int_\G\!\Big(\int_{\wG}\,{\rm Tr}_\xi\Big[\xi(y^{-1}x)a\big(x\tau(y^{-1}x)^{-1}\!,\xi\big)\Big]d\wm(\xi)\Big)u(y)d\m(y)\,,
\end{equation}
from which (\ref{fred}) can be recovered putting $\tau(x)=\e$ (the identity) for every $x\in\G$\,. 
This formula and its integral version will be summarised in \eqref{bilfred3}.
The case $\tau(x)=x$ is also related to a standard choice 
\begin{equation}\label{id}
\[{\sf Op}^{{\sf id}_\G}(a)u\]\!(x)=\int_\G\!\Big(\int_{\wG}{\rm Tr}_\xi\big[\xi(y^{-1}x)a(y,\xi)\big]d\wm(\xi)\Big)u(y)\,d\m(y)\,,
\end{equation}
familiar at least in the case $\G=\R^n$ (derivatives to the left, positions to the right).
In the presence of $\tau$ some formulae are rather involved, but the reader can take the basic case $\tau(\cdot)=\e$ as the main example. Anyhow, for the function spaces we consider in this paper, the formalisms corresponding to different mappings $\tau$ are actually isomorphic. Having in mind the Weyl quantization for $\G=\R^n$\, we deal in Section \ref{fourtinin} with the problem of a symmetric quantization, for which one has ${\sf Op}^\tau\!(a)^*={\sf Op}^\tau\!(a^\star)$\,, where $a^\star$ is an operator version of complex conjugation. 
We also note that if the symbol $a(x,\xi)=a(\xi)$ is independent of $x$, the operator
${\sf Op}^\tau\!(a)$ is left-invariant and independent of $\tau$, and can be rewritten in the form of the Fourier multiplier
\begin{equation}\label{EQ:fm1}
{\fscr F}\[{\sf Op}^\tau\!(a)u\](\xi)=a(\xi){\fscr F}\[u\](\xi),\quad \xi\in \wG\,,
\end{equation}
at least for sufficiently well-behaved functions $u$, i.e. as an operator of ``multiplication'' of the operator-valued 
Fourier coefficients from the left.

\medskip
One of our purposes is to sketch two justifications of formula \eqref{bilfred}, which both hold without a Lie structure on $\G$ (we refer to \cite{BBM,MPR1} to similar strategies in quite different situations). They also enrich the formalism and have certain applications, some of them included here, others subject of subsequent developments.  Let us say some words about the two approaches.

\medskip
1. A locally compact group $\G$ being given, we have a canonical action by (left) translations on various $C^*$-algebras of functions on $\G$\,. There are crossed product constructions associated to such situations, presented in Section \ref{fourtin}: One gets $^*$-algebras of scalar-valued functions on $\G\times\G$ involving a product which is a convolution in one variable and a pointwise multiplication in the other variable, suitably twisted by the action by translations. A $C^*$-norm with an operator flavour is also available, with respect to which one takes a completion. Since we have to accommodate the parameter $\tau$, we were forced to outline an extended version of crossed products. 

Among the representations of these $C^*$-algebras there is a distinguished one presented and used in Subsection \ref{firea}, the Schr\"odinger representation, in the Hilbert space $L^2(\G)$\,. If $\G$ is type I, second countable and unimodular, there is a nicely-behaved Fourier transform sending functions on $\G$ into operator-valued sections defined over $\wG$\,. This can be augmented to a partial Fourier transform sending functions on $\G\times\G$ into sections over $\G\times\wG$\,. Starting from the crossed products, this partial Fourier transform serves to define, by transport of structure, $^*$-algebras of symbols with a multiplication generalising the Weyl-Moyal calculus as well as Hilbert space representations  of the form (\ref{bilfred}). They are shown to be generated by products of suitable multiplication and convolution operators.

The $C^*$-background can be used, in a slightly more general context, to generate covariant families of pseudo-differential operators, cf. Subsection \ref{midol}.  It also leads to results about the spectrum of certain bounded or unbounded pseudo-differential operators, as it is presented in Subsection \ref{midol} and will be continued in a subsequent paper.

\medskip
2. A second approach relies on Weyl systems. If $\G=\mathbb R^n$ one can write 
$$
{\sf Op}(a)=\int_{\mathbb R^{2n}}\!\widehat a(\xi,x)W(\xi,x)\,dxd\xi\,,
$$ 
where the Weyl system (phase-space shifts) 
$$
\big\{W(\xi,x):=V(\xi)U(x)\mid(x,\xi)\in\mathbb R^{2n}\big\}
$$ 
is a family of unitary operators in $L^2(\mathbb R^n)$ obtained by putting together translations and modulations. This is inspired by the Fourier inversion formula, but notice that $W$ is only a projective representation; this is a precise way to codify the canonical commutation relations between positions $Q$ (generating $V$) and momenta $P$ (generating $U$) and ${\sf Op}$ can be seen as a non-commutative functional calculus $a\mapsto a(Q,P)\equiv{\sf Op}(a)$\,. Besides its phase-space quantum mechanical interest, this point of view also opens the way to some new topics or tools such as the Bargmann transform, coherent states, the anti-Wick quantization, coorbit spaces, etc. 

In Section \ref{fourtein} we show that such a ``Weyl system approach" and its consequences are also available in the context of second countable, unimodular type I groups; in particular it leads to (\ref{bilfred}). 
The Weyl system in this general case, adapted in Definitions \ref{siegmund} and \ref{andrei} to the existence of the quantization parameter $\tau$\,, has nice technical properties (including a fibered form of square integrability) that are proven in Subsection \ref{tintin}. This has useful consequences at the level of the quantization process, as shown in Subsection \ref{teletin}. In particular, it is shown that ${\sf Op}^\tau$ is a unitary map from a suitable class of square integrable sections over $\G\times\wG$ to the Hilbert space of all Hilbert-Schmidt operators on $L^2(\G)$\,. The intrinsic $^*$-algebraic structure on the level of symbols is briefly treated in Subsection \ref{fraterin}. In Subsection \ref{tontolin} we rely on complex interpolation and non-commutative $L^p$-spaces to put into evidence certain families of Schatten-class operators.

\medskip
Without assuming that $\G$ is a Lie group we do not have the usual space of smooth compactly supported functions
readily available as the standard space of test functions. So, in Section \ref{fergulin}, we will be using its generalisation to the setting of
locally compact groups by Bruhat \cite{Br}, and these Bruhat spaces $\mathcal D(\G)$ and $\mathcal D'(\G)$ will
replace the usual spaces of test functions and distributions in our setting. An important fact is that they are nuclear. Taking suitable tensor products one also gets a space $\mathscr D(\G\times\wG)$ of regularising symbols and (by duality) a space $\mathscr D'(\G\times\wG)$ of ``distributions", allowing to define unbounded pseudo-differential operators.

In Subsection \ref{fifitan} we show that pseudo-differential operators with regularising operator-valued symbols can be used to describe compactness of families of vectors or operators in $L^2(\G)$\,.

\medskip
Besides the usual ordering issue (derivatives to the left or to the right), already appearing for $\mathbb R^n$ and connected to the Heisenberg commutation relations and the symplectic structure of phase space, for general groups there is a second ordering problem coming from the intrinsic non-commutativity of $\G$. The Weyl system used in Section \ref{fourtein} relies on translations to the right, aiming at a good correspondence with the previously studied compact and nilpotent cases. Another Weyl system, involving left translations, is introduced in Section \ref{fourtamin} and used in defining a left quantization. It turns out that this one is directly linked to crossed product $C^*$-algebras.

\medskip
We dedicated the last section to a brief overview of quantization on (connected, simply connected) nilpotent Lie groups. Certain subclasses have been thoroughly examined in references cited above, so we are going to concentrate on some new features. Besides the extra generality of the present setting (non-graded nilpotent groups, $\tau$-quantizations, $C^*$-algebras), we are also interested in the presence of a second formalism, involving scalar-valued symbols. We show that it is equivalent to the one involving operator-valued symbols, emerging as a particular case of the previous sections. This is a rather direct consequence of the excellent behaviour of the exponential function in the nilpotent case. On one hand, the analysis in this paper here outlines a $\tau$-extension of the scalar-valued calculus on nilpotent Lie groups initiated by Melin \cite{Melin}, see also \cite{Glo1, Glo2} for further developments on
homogeneous and general nilpotent Lie groups. On the other hand, it relates this to the operator-valued calculus 
developed in \cite{FR, FR1}.

After some basic constructions involving various types of Fourier transformations are outlined, the detailed development of the pseudo-differential operators with scalar-valued symbol follows along the lines already indicated. So, to save space and avoid repetitions, we will be rather formal and sketchy and leave many details to the reader. Actually the Lie structure of a nilpotent group permits a deeper investigation that was treated in \cite{FR1} and should be still subject of future research. 

\medskip
Thus, to summarise, the main results of this paper are as follows:

\begin{itemize}
\item We develop a rigorous framework for the analysis of pseudo-differential operators on locally compact groups of type I, which we assume also unimodular for technical simplicity.
\item We introduce notions of Wigner and Fourier-Wigner transforms, and of Weyl systems, adap\-ted to this general setting. These notions are used to define and analyse $\tau$-quantizations (or quantization by Weyl systems) of operators modelled on families of quantizations on $\mathbb R^n$ that include the Kohn-Nirenberg and Weyl quantizations.
\item We develop the $C^*$-algebraic formalism to put $\tau$-quantizations in a more general perspective, also allowing analysis of operators with `coefficients' taking values in different $C^*$-algebras. The link with a left form of $\tau$-quantization is given via a special covariant representation, the Schr\"odinger representation. This is further applied to investigate spectral properties of covariant families of operators.
\item Although the initial analysis is set for operators bounded on $L^2(\G)$, this can be extended further to include densely defined operators and, more generally, operators from $\mathcal D(\G)$ to $\mathcal D'(\G)$. Since $\G$ does not have to be a Lie group (i.e. there may be no compatible smooth differential structure on $\G$) we show how this can be done using the so-called Bruhat space $\mathcal D(\G)$, an analogue of the space of smooth compactly supported functions in the setting of general locally compact groups.
\item The results are applied to a deeper analysis of $\tau$-quantizations on nilpotent Lie groups. On one hand, this extends the setting of graded Lie groups developed in depth in \cite{FR, FR1} to more general nilpotent Lie groups, also introducing a possibility for Weyl-type quantizations there. On the other hand, it extends the invariant Melin calculus \cite{Melin, Glo2} on homogeneous groups to general non-invariant operators with the corresponding  $\tau$-versions of scalar-symbols on the dual of the Lie algebra;
\item We give a criterion for the existence of Weyl-type quantizations in our framework, namely, to quantizations in which real-valued symbols correspond to self-adjoint operators. We show the existence of such quantizations in several settings, most interestingly in the setting of general groups of exponential type.
\end{itemize}

In this paper we are mostly interested in symbolic understanding of pseudo-differential operators. Approaches through
kernels exist as well, see e.g. Meladze and Shubin \cite{MS} and further works by these authors on
operators on unimodular Lie groups, or Christ, Geller, G\l owacki and Polin \cite{CGGP} on homogeneous groups -- but see
also an alternative (and earlier) symbolic approach to that on the Heisenberg group by Taylor \cite{Ta}.

\bigskip
{\bf Acknowledgements:} MM was supported by N\'ucleo Milenio de F\'isica Matem\'atica RC120002 and the Fondecyt Project 1120300.
MR was supported by the EPSRC Grant EP/K039407/1 and by the Leverhulme Research Grant RPG-2014-02.
The authors were also partly supported by EPSRC Mathematics Platform grant EP/I019111/1.

\section{Framework}\label{bogart}

In this section we set up a general framework of this paper, also recalling very briefly necessary elements of the theory of
type I groups and their Fourier analysis.

\subsection{General}\label{brocart}

For a given (complex, separable) Hilbert space $\H$\,, the scalar product $\<\cdot,\cdot\>_\H$ will be linear in the first variable and anti-linear in the second.  One denotes by $\mathbb B(\H)$ the $C^*$-algebra of all linear bounded operators in $\H$ and by $\mathbb K(\H)$ the closed bi-sided $^*$-ideal of all the compact operators. The Hilbert-Schmidt operators form a two-sided $^*$-ideal $\mathbb B^2(\H)$ (dense in $\mathbb K(\H)$) which is also a Hilbert space with the scalar product $\<A,B\>_{\mathbb B^2(\H)}:={\rm Tr}\!\(AB^*\)$\,. This Hilbert space is unitarily equivalent to the Hilbert tensor product $\H\otimes\overline{\H}$\,, where $\overline{\H}$ is the Hilbert space opposite to $\H$\,. The unitary operators form a group $\mathbb U(\H)$\,. The commutant of a subset $\mathcal N$ of $\mathbb B(\H)$ is denoted by $\mathcal N'$.

\medskip
Let $\G$ be a locally compact group with unit $\e$ and fixed left Haar measure $\m$\,. Our group will soon be supposed unimodular, so $\m$ will also be a right Haar measure. By $\mathcal C_{\rm c}(\G)$ we denote the space of all complex continuous compactly supported functions on $\G$\,. For $p\in[1,\infty]$\,, the Lebesgue spaces $L^p(\G)\equiv L^p(\G;\m)$ will always refer to the Haar measure. We denote by $C^*(\G)$ the full (universal) $C^*$-algebra of $\G$ and by $C^*_{\rm red}(\G)\subset\mathbb B\big[L^2(\G)\big]$ the reduced $C^*$-algebra of $\G$\,. Recall that any representation $\pi$ of $\G$ generates canonically a non-degenerate represention $\Pi$ of the $C^*$-algebra $C^*(\G)$\,. The notation $A(\G)$ is reserved for Eymard's Fourier algebra of the group $\G$\,.

\medskip
The canonical objects in representation theory \cite{Di,Fo1} will be denoted by ${\rm Rep}(\G),{\rm Irrep(\G)}$ and $\widehat{\G}$\,. An element of ${\rm Rep}(\G)$ is a Hilbert space representation $\pi:\G\rightarrow\mathbb U(\H_\pi)\subset\mathbb B(\H_\pi)$\,, always supposed to be strongly continuous. If it is irreducible, it belongs to ${\rm Irrep(\G)}$ by definition. Unitary equivalence of representations will be denoted by $\cong$\,. We set $\,\wG:={\rm Irrep(\G)}/_{\cong}$ and call it {\it the unitary dual of $\G$}\,. If $\G$ is Abelian, the unitary dual $\wG$ is the Pontryagin dual group; if not, $\wG$ has no group structure.
{\it A primary (factor) representation} $\pi$ satisfies, by definition, $\pi(\G)'\cap\pi(\G)''=\mathbb C\,{\sf id}_{\H_\pi}$\,.

\begin{Definition}\label{uja}
The locally compact group $\G$ will be called {\rm admissible} if it is second countable, type I and unimodular.
\end{Definition}

Admissibility will be a standing assumption and it is needed for most of the main constructions and results. There are hopes to extend at least parts of this paper to non-unimodular groups, by using techniques of \cite{DM}. 

\begin{Remark}\label{nschi}
{\rm We assume that the reader is familiar with  the concept of {\it type I group}. Let us only say that for such a group every primary representation is a direct sum of copies of some irreducible representation; for the full theory we refer to \cite{Di,Fo1,Fu}. In \cite[Th. 7.6]{Fo1} (see also \cite{Di}), many equivalent characterisations are given for a {\it second  countable} locally compact group to be type I. In particular, in such a case, the notion is equivalent to postliminarity (GCR). Thus $\G$ is type I if and only if for every irreducible representation $\pi$ one has $\mathbb K(\H_\pi)\subset\Pi\big[C^*(\G)\big]$\,. 

The single way we are going to use the fact that $\G$ is type I is through one main consequence of this property, to be outlined below: the existence of a measure on the unitary dual $\wG$ for which a Plancherel Theorem holds.
}
\end{Remark}

\begin{Example}\label{joriski}
{\rm Compact and Abelian groups are type I. So are the Euclidean and the Poincar\'e groups. Among the connected groups, real algebraic, exponential (in particular nilpotent) and semi-simple Lie groups are type I. Not all the solvable groups are type I; see \cite[Th. 7.10]{Fo1} for a criterion. 
A discrete group is type I \cite{Th} if and only if it is the finite extension of an Abelian normal subgroup. So the non-trivial free groups or the discrete Heisenberg group are not type I.}
\end{Example}

\begin{Remark}\label{rank}
{\rm We recall that, being second countable, $\G$ will be separable, $\si$-compact and completely metrizable; in particular, as a Borel space it will be standard. The Haar measure $\m$ is $\sigma$-finite and $L^p(\G)$ is a separable Banach space if $p\in[1,\infty)\,$. In addition, all the cyclic representations have separable Hilbert spaces; this applies, in particular, to irreducible representations.

A second countable discrete group is at most countable.
}
\end{Remark}

We mention briefly some harmonic analysis concepts; full treatement is given in \cite{Di,Fo1}. The precise definitions and properties will either be outlined further on, when needed, or they will not be explicitly necessary.

Both ${\rm Irrep(\G)}$ and {\it the unitary dual} $\,\wG:={\rm Irrep(\G)}/_{\cong}\,$ are endowed with (standard) Borel structures \cite[18.5]{Di}. The structure on $\wG$ is the quotient of that on ${\rm Irrep(\G)}$ and is called {\it the Mackey Borel structure}. There is a measure on $\wG$\,, called {\it the Plancherel measure associated to $\m$} and denoted by $\wm$ \cite[18.8]{Di}. Its basic properties, connected to the Fourier transform, will be briefly discussed below. 

The unitary dual $\wG$ is also a separable locally quasi-compact Baire topological space having a dense open locally compact subset \cite[18.1]{Di}. Very often this topological space is not Hausdorff (this is the difference between "locally quasi-compact" and "locally compact").

\begin{Remark}\label{senzatie}
{\rm We are going to use a systematic abuse of notation that we now explain. There is a $\wm$-measurable field $\big\{\,\H_\xi\mid\xi\in\wG\,\big\}$ of Hilbert spaces  and a measurable section $\wG\ni\xi\mapsto\pi_\xi\in{\rm Irrep(\G)}$ such that each $\pi_\xi:\G\rightarrow\mathbb B(\H_\xi)$ is a irreducible representation belonging to the class $\xi$\,. In various formulae, instead of $\pi_\xi$ we will write $\xi$\,, making a convenient identification between irreducible representations and classes of irreducible representations.  
The measurable field of irreducible representations $\big\{\,(\pi_\xi,\H_\xi)\mid\xi\in\wG\,\big\}$ is fixed and other choices would lead to equivalent constructions and statements.
}
\end{Remark}

One introduces the direct integral Hilbert space 
\begin{equation}\label{andy}
\mathscr B^2(\wG):=\int_{\wG}^\oplus\!\B^2(\H_\xi)\,d\wm(\xi)\,\cong\int_{\wG}^\oplus\!\H_\xi\otimes\overline\H_\xi\,d\wm(\xi)\,,
\end{equation}
with the obvious scalar product
\begin{equation}\label{justin}
\<\phi_1,\phi_2\>_{\mathscr B^2(\wG)}:=\int_{\wG}\,\<\phi_1(\xi),\phi_2(\xi)\>_{\mathbb B^2(\H_\xi)}d\wm(\xi)=\int_{\wG}{\rm Tr}_\xi\!\[\phi_1(\xi)\phi_2(\xi)^*\]d\wm(\xi)\,,
\end{equation}
where ${\rm Tr}_\xi$ refers to the trace in $\mathbb B(\H_\xi)$\,.
More generally, for $p\in[1,\infty)$\, one defines $\mathscr B^p(\wG)$ as the family of measurable fields $\,\phi\equiv\big(\phi(\xi)\big)_{\xi\in\wG}\,$ for which $\phi(\xi)$ belongs to the Schatten-von Neumann class $\mathbb B^p(\H_\xi)$ for almost every $\xi$ and
\begin{equation}\label{bp}
\p\!\phi\!\p_{\mathscr B^p(\wG)}:=\Big(\int_{\wG}\p\!\phi(\xi)\!\p_{\mathbb B^p(\H_\xi)}^p\!d\wm(\xi)\Big)^{1/p}<\infty\,.
\end{equation}
They are Banach spaces. We also recall that the von Neumann algebra of decomposable operators $\mathscr B(\wG):=\int_{\wG}^\oplus\B(\H_\xi)\,d\wm(\xi)$ acts to the left and to the right in the Hilbert space $\mathscr B^2(\wG)$ in an obvious way.

\medskip
On $\,\Gamma:=\G\times\wG$\,, which might {\it not} be a locally compact space or a group, we consider the product measure $\m\otimes\wm$\,. It is independent of our choice for $\m$ (if $\m$ is replaced by $\lambda\m$\, for some strictly positive number $\lambda$\,, the corresponding Plancherel measure will be $\lambda^{-1}\wm$)\,. 
Very often we are going to need $\,\widehat{\Gamma}:=\wG\times\G$ (this notation should {\it not} suggest a duality) with the measure $\wm\otimes\m$\,. We could identify it with $\Gamma$
(by means of the map $(\xi,x)\mapsto(x,\xi)$) but in most cases it is better not to do this identification. 

\medskip
Associated to these two measure spaces, we also need the Hilbert spaces
\begin{equation}\label{horace}
\mathscr B^2(\Gamma)\equiv\mathscr B^2\big(\G\times\wG\big):= L^2(\G)\otimes \mathscr B^2(\wG)
\end{equation}
and
\begin{equation}\label{Horace}
\mathscr B^2(\widehat{\Gamma})\equiv\mathscr B^2\big(\wG\times\G\big):= \mathscr B^2(\wG)\otimes L^2(\G)\,,
\end{equation}
also having direct integral decompositions.

\subsection{The Fourier transform}\label{brokart}

The Fourier transform \cite[18.2]{Di} of $u\in L^1(\G)$ is given in weak sense by
\begin{equation}\label{ion}
({\fscr F}u)(\xi)\equiv \widehat{u}(\xi):=\int_\G u(x)\xi(x)^*d\m(x) \in\mathbb B(\H_\xi)\,.
\end{equation} 
Here and subsequently the interpretation of $\xi\in\wG$ as a true irreducible representation is along the lines of Remark \ref{senzatie}. Actually, by the compressed form \eqref{ion} we mean that for $\varphi_\xi,\psi_\xi\in\H_\xi$ one has
\begin{equation*}\label{ionn}
\big<({\fscr F}u)(\xi)\varphi_\xi,\psi_\xi\big>_{\H_\xi}:=\int_\G\!u(x)\big<\varphi_\xi,\pi_\xi(x)\psi_\xi\big>_{\H_\xi}d\m(x) \,.
\end{equation*}

Some useful facts \cite[18.2 and 3.3]{Di}: 
\begin{itemize}
\item
The Fourier transform $\,{\fscr F}:L^1(\G)\rightarrow\mathscr B(\wG)$ is linear, contractive and injective\,.
\item
For every $\epsilon>0$ there exists a quasi-compact subset $K_\epsilon\subset\wG$ such that $\,\p\!({\fscr F}u)(\xi)\!\p_{\mathbb B(\H_\xi)}\,\le\epsilon\,$ if $\xi\notin K_\epsilon$\,. 
\item
The map $\,\wG\ni\xi\mapsto\p\!({\fscr F}u)(\xi)\!\p_{\mathbb B(\H_\xi)}\in\R\,$ is lower semi-continuous. It is even continuous, whenever $\wG$ is Hausdorff.
\end{itemize}

Recall \cite{Di,Fo,Fu} that {\it the Fourier transform ${\fscr F}$ extends (starting from $L^1(\G)\cap L^2(\G)$) to a unitary isomorphism 
${\fscr F}:L^2(\G)\rightarrow \mathscr B^2(\wG)$}\,. This is the generalisation of Plancherel's Theorem to (maybe non-commutative) admissible groups and it will play a central role in our work. 

\begin{Remark}\label{bodrum}
{\rm It is also known \cite{Li,Fu} that ${\fscr F}$ restricts to a bijection
\begin{equation}\label{jection}
{\fscr F}_{(0)}:L^2(\G)\cap A(\G)\rightarrow \mathscr B^2(\wG)\cap\mathscr B^1(\wG)
\end{equation}
 with inverse given by (the traces refer to $\H_\xi$)
\begin{equation}\label{marian}
\({\fscr F}_{(0)}^{-1}\phi\)\!(x)=\int_{\widehat{\G}}{\rm Tr}_\xi[\xi(x)\phi(\xi)]d\wm(\xi)\,.
\end{equation}
Rephrasing this, the restriction of the inverse ${\fscr F}^{-1}$ to the subspace $\mathscr B^2(\wG)\cap\mathscr B^1(\wG)$ has the explicit form \eqref{marian}, and this will be a useful fact.
Note the consequence, valid for $u\in L^2(\G)\cap A(\G)$ and for $\m$-almost every $x\in\G$\,:
\begin{equation}\label{megazork}
u(x)=\int_{\widehat{\G}}{\rm Tr}_\xi[({\fscr F}u)(\xi)\xi(x)]d\wm(\xi)=
\int_{\widehat{\G}}{\rm Tr}_\xi[\xi(x)\widehat{u}(\xi)]d\wm(\xi)\,.
\end{equation}
In particular, this holds for $u\in\mathcal C_{\rm c}(\G)$\,. The extension ${\fscr F}_{(1)}$ of ${\fscr F}_{(0)}$ to $A(\G)$ makes sense as an isometry 
${\fscr F}_{(1)}:A(\G)\rightarrow\mathscr B^1(\wG)$\,.

\medskip
Combining the quantization formula \eqref{fred} with the Fourier transform \eqref{ion}, we can write  \eqref{fred} also as
\begin{equation}\label{fred0}
\[{\sf Op}(a)u\]\!(x)= \int_{\wG}{\rm Tr}_\xi\!\[\xi(x)a(x,\xi) \widehat{u}(\xi)\]\!d\wm(\xi)\,,
\end{equation}
which can be viewed as an extension of the Fourier inversion formula \eqref{megazork}.
}
\end{Remark}

\begin{Remark}\label{vudrun}
{\rm By a formula analoguous to \eqref{ion}, the Fourier transform is even defined (and injective) on bounded complex Radon measures $\mu$ on $\G$\,. One gets easily 
\begin{equation*}\label{mormaici}
\sup_{\xi\in\wG}\p\!{\fscr F}\mu\!\p_{\mathbb B(\H_\xi)}\,\le\,\p\!\mu\!\p_{M^1(\G)}\,:=|\mu|(\G)\,.
\end{equation*}
}
\end{Remark}

\begin{Remark}\label{ltima}
{\rm There are many (full or partial) Fourier transformations that can play important roles, as
\begin{equation}\label{forier}
{\fscr F}\otimes{\sf id}:L^2(\G\times\G)\rightarrow\mathscr B^2(\widehat{\Gamma})\,,\quad {\sf id}\otimes{\fscr F}:L^2(\G\times\G)\rightarrow\mathscr B^2(\Gamma)\,.
\end{equation}
\begin{equation}\label{furier}
{\fscr F}\otimes{\fscr F}^{-1}:\mathscr B^2(\Gamma)\rightarrow\mathscr B^2(\widehat{\Gamma})\,,\quad{\fscr F}^{-1}\otimes{\fscr F}:\mathscr B^2(\widehat{\Gamma})\rightarrow\mathscr B^2(\Gamma)\,.
\end{equation}
They might admit various extensions or restrictions.}
\end{Remark}

\section{Quantization by a Weyl system}\label{fourtein}

In this section we introduce a notion of a Weyl system in our setting and outline its relation to Wigner and Fourier-Wigner transforms. This is then used to define pseudo-differential operators through $\tau$-quantization for an arbitrary measurable function
$\,\tau:\G\rightarrow\G$\,. The introduced formalism is then applied to study (involutive) algebra properties of symbols and operators as well as Schatten class properties in the setting of non-commutative $L^p$-spaces. One of the goals here is to give rigorous understanding to the $\tau$-quantization formula \eqref{bilfred}. 

\subsection{Weyl systems and their associated transformations}\label{tintin}

Let us fix a measurable function $\,\tau:\G\rightarrow\G$\,. We will often use the notation $\tau x\equiv \tau(x)$ to avoid writing too many brackets.

\begin{Definition}\label{siegmund}
For $x\in\G$ and $\pi\in{\rm Rep(\G)}$ one defines a unitary operator $W^\tau\!(\pi,x)$ in the Hilbert space $L^2(\G;\H_\pi)\equiv L^2(\G)\otimes\H_\pi$ by
\begin{equation}\label{frieda}
\[W^\tau\!(\pi,x)\Theta\]\!(y):=\pi\!\[y(\tau x)^{-1}\]^*[\Theta(yx^{-1})]=\pi[\tau(x)]\pi(y)^*[\Theta(yx^{-1})]\,.
\end{equation}
\end{Definition}

If $\pi\cong\rho$\,, i.e. if $\rho(x)U=U\pi(x)$ for some unitary operator $U:\H_\pi\rightarrow\H_\rho$ and for every $x\in\G$\,, then it follows easily that 
\begin{equation*}\label{fritz}
W^\tau(\rho,x)=({\sf id}\otimes U)W^\tau(\pi,x)({\sf id}\otimes U)^{-1}.
\end{equation*}

We record for further use the formula
\begin{equation}\label{lividiu}
\begin{aligned}
W^{\tau'}\!(\pi,x)&=\big[{\sf id}\otimes\pi(\tau'x)\big]\big[{\sf id}\otimes\pi(\tau x)^*\big]W^\tau\!(\pi,x)\\
&=\big[{\sf id}\otimes\pi\!\((\tau'x)(\tau x)^{-1}\)\big]W^\tau\!(\pi,x)
\end{aligned}
\end{equation}
making the connection between operators defined by different parametres $\tau,\tau'$ as well as the explicit form of the adjoint
\begin{equation*}\label{liviu}
\[W^\tau\!(\pi,x)^*\Theta\]\!(y)=\pi\!\[yx(\tau x)^{-1}\]\!\[\Theta(yx)\].
\end{equation*}
One also notes that $W^\tau\!({\mathfrak 1},x)=R\big(x^{-1}\big)$\,, where $R$ is the right regular representation of $\G$ and ${\mathfrak 1}$ is the $1$-dimensional trivial representation. In this case $\H_{\mathfrak 1}=\mathbb C$\,, so $L^2(\G;\H_{\mathfrak 1})$ reduces to $L^2(\G)$\,.

\begin{Remark}\label{stupoare}
{\rm 
One can not compose the operators $W^\tau\!(\pi,x)$ and $W^\tau\!(\rho,y)$ in general, since they act in different Hilbert spaces. Note, however, that the family $\,{\rm Rep}(\G)/_{\cong}\,$ of all the unitary equivalence classes of representations form an Abelian monoid with the tensor composition
\begin{equation*}\label{tensori?}
(\pi\otimes\rho)(x):=\pi(x)\otimes\rho(x)\,,\quad x\in\G\,,
\end{equation*}
and the unit $\mathfrak 1$ (after a suitable reinterpretation in terms of equivalence classes). The subset $\,\wG={\rm Irrep}(\G)/_{\cong}\,$ is not a submonoid in general, but the generated submonoid, involving finite tensor products of irreducible representations, could be interesting. It is instructive to compute the operator in $L^2(\G;\H_\pi\otimes\H_\rho)$ 
\begin{equation}\label{racoare}
\big[W(\pi,x)\otimes{\sf id}_\rho\big]\big[W(\rho,y)\otimes{\sf id}_\pi\big]=\big[{\sf id}_{L^2(\G)}\otimes\rho(x)\otimes{\sf id}_\pi\big]W(\pi\otimes\rho,yx)\,;
\end{equation}
to get this result one has to identify $\H_\pi\otimes\H_\rho$ with $\H_\rho\otimes\H_\pi$\,. If $\G$ is Abelian, the unitary dual $\wG$ is the Pontryagin dual group, the irreducible representations are $1$-dimensional and for $\xi\equiv\pi\in{\wG}$ and $\eta\equiv\rho\in{\wG}$ the identity \eqref{racoare} reads
\begin{equation*}\label{racorica}
W(\xi,x)W(\eta,y)=\eta(x)W(\xi\eta,xy)\,.
\end{equation*}
Thus $\,W:\wG\times\G\rightarrow\mathbb B[L^2(\G)]$ is a unitary projective representation with $2$-cocycle (multiplier)
\begin{equation*}\label{ite}
\varpi:\big(\wG\times\G\big)\times\big(\wG\times\G\big)\rightarrow\mathbb T\,,\quad\varpi\big((\xi,x),(\eta,y)\big):=\eta(x)\,.
\end{equation*}
Similar computations can be done for $W^\tau$ with general $\tau$\,.
}
\end{Remark}

From now one we mostly concentrate on the family of operators $W^\tau\!(\xi,x)$ where $x\in\G$ and $\xi$ is an irreducible representation. Extrapolating from the case $\G=\R^n$, we call this family {\it a Weyl system}.

\medskip
Below, for an operator $T$ in $L^2(\G;\H_\xi)\cong L^2(\G)\otimes\H_\xi$ and a pair of vectors $u,v\in L^2(\G)$\,, the action of $\,\left<Tu,v\right>_{L^2(\G)}\in\mathbb B(\H_\xi)$ on $\,\varphi_\xi\in\H_\xi$ is given by 
\begin{equation}\label{tauriel}
\left<Tu,v\right>_{L^2(\G)}\varphi_\xi:=\int_{\G}[T(u\otimes\varphi_\xi)](y)\overline{v(y)}\,d\m(y)\in\H_\xi\,.
\end{equation}

\begin{Definition}\label{andrei}
For $(x,\xi)\in\G\times\wG$ and $u,v\in L^2(\G)$ one sets
\begin{equation}\label{leo}
\mathcal W^\tau_{u,v}(\xi,x):=\left<W^\tau\!(\xi,x)\overline u,\overline v\right>_{L^2(\G)}\in\mathbb B(\H_\xi)\,.
\end{equation}
\end{Definition}

This definition is suggested by the standard notion of {\it representation coefficient function} from the theory of unitary group representations. However, in general, $\wG\times\G$ is not a group, $\mathcal W^\tau_{u,v}$ is not scalar-valued, and
$W^\tau\!(\xi,x)W^\tau\!(\eta,y)$ makes no sense. 

\begin{Remark}\label{toate}
{\rm Note the identity
\begin{equation}\label{suspinare}
\left<\mathcal W_{u,v}^\tau(\xi,x)\varphi_\xi,\psi_\xi\right>_{\H_\xi}=\big<W^\tau\!(\xi,x)(\overline u\otimes\varphi_\xi),\overline v\otimes\psi_\xi\big>_{L^2(\G;\H_\xi)}\,,
\end{equation}
valid for $u,v\in L^2(\G)\,,\,\varphi_\xi,\psi_\xi\in\H_\xi\,,\,(\xi,x)\in\widehat\Gamma$\,. It follows immediately from \eqref{leo} and \eqref{tauriel}. In fact \eqref{suspinare} can serve as a definition of $\mathcal W_{u,v}^\tau(\xi,x)$\,.}
\end{Remark}

\begin{Proposition}\label{mataus}
The mapping $(u,v)\mapsto\mathcal W^\tau_{u,v}$ defines a unitary map (denoted by the same symbol) $\mathcal W^\tau:\overline{L^2(\G)}\otimes L^2(\G)\rightarrow\mathscr B^2(\widehat\Gamma)$\,, called {\rm the Fourier-Wigner $\tau$-transformation}.
\end{Proposition}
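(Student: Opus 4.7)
The approach is to factor $\mathcal W^\tau$ into the composition of three unitary maps and thereby reduce the claim to Plancherel's theorem together with a measure-preserving change of variables on $\G\times\G$ enabled by unimodularity.

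First I would unfold the definition. For $u,v\in L^2(\G)$, $\varphi_\xi,\psi_\xi\in\H_\xi$, using \eqref{tauriel} and the explicit formula \eqref{frieda},
\begin{align*}
\bigl\langle \mathcal W^\tau_{u,v}(\xi,x)\varphi_\xi,\psi_\xi\bigr\rangle_{\H_\xi}
&=\int_\G \overline{u(yx^{-1})}\,v(y)\,\bigl\langle \xi(\tau x)\xi(y)^*\varphi_\xi,\psi_\xi\bigr\rangle_{\H_\xi}\,d\m(y)\\
&=\bigl\langle \xi(\tau x)\,\widehat{f^{\,x}_{u,v}}(\xi)\varphi_\xi,\psi_\xi\bigr\rangle_{\H_\xi},
\end{align*}
where $f^{\,x}_{u,v}(y):=v(y)\overline{u(yx^{-1})}$ and $\widehat{\;\cdot\;}$ is the Fourier transform \eqref{ion}. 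Thus
$$
\mathcal W^\tau_{u,v}(\xi,x)=\xi(\tau x)\,\widehat{f^{\,x}_{u,v}}(\xi)=\xi(\tau x)\,\mathcal W^\e_{u,v}(\xi,x).
$$
Since $\xi(\tau x)\in\mathbb U(\H_\xi)$, fiberwise left-multiplication by $\xi(\tau x)$ is a unitary automorphism of $\mathscr B^2(\widehat\Gamma)$, so it suffices to treat the case $\tau\equiv\e$.

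Next, I would exhibit $\mathcal W^\e$ as the composition of two elementary unitaries. Define
$$
\mathcal T:\overline{L^2(\G)}\otimes L^2(\G)\to L^2(\G\times\G),\qquad \mathcal T(\overline u\otimes v)(x,y):=v(y)\overline{u(yx^{-1})}.
$$
The map $\mathcal T$ factors as the standard tensor isomorphism $\overline u\otimes v\mapsto\bigl((z,y)\mapsto\overline{u(z)}v(y)\bigr)$ followed by the unitary change-of-variables operator induced by the bijection $(x,y)\mapsto(yx^{-1},y)$ of $\G\times\G$; this bijection is measure-preserving thanks to unimodularity, so $\mathcal T$ is unitary. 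The partial Fourier transform ${\sf id}\otimes{\fscr F}:L^2(\G\times\G)\to L^2(\G)\otimes \mathscr B^2(\wG)\cong\mathscr B^2(\widehat\Gamma)$ is unitary by the noncommutative Plancherel theorem (cf.\ Remark \ref{ltima}). Composing these two unitaries reproduces $\mathcal W^\e$, and composing further with the unitary $M_\tau:\phi(\xi,x)\mapsto\xi(\tau x)\phi(\xi,x)$ yields $\mathcal W^\tau$.

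The main obstacle, a modest one, is to ensure that the above formal calculations extend from a dense subspace (e.g.\ $\mathcal C_{\rm c}(\G)\odot\mathcal C_{\rm c}(\G)$, on which $f^{\,x}_{u,v}\in L^1(\G)\cap L^2(\G)$ and the Fourier transform has its classical meaning \eqref{ion}) to all of $\overline{L^2(\G)}\otimes L^2(\G)$, and to pin down the identification $\mathscr B^2(\widehat\Gamma)=\mathscr B^2(\wG)\otimes L^2(\G)\cong L^2(\G)\otimes\mathscr B^2(\wG)$ compatibly with ${\sf id}\otimes{\fscr F}$. A clean packaging is to establish, on this dense subspace, the isometry identity
$$
\|\mathcal W^\tau_{u,v}\|^2_{\mathscr B^2(\widehat\Gamma)}=\int_\G\!\|f^{\,x}_{u,v}\|^2_{L^2(\G)}\,d\m(x)=\|u\|_{L^2(\G)}^2\,\|v\|_{L^2(\G)}^2
$$
(by unitarity of $\xi(\tau x)$, Plancherel, Fubini, and the unimodular change of variable), polarize to recover the inner product on $\overline{L^2(\G)}\otimes L^2(\G)$, extend by continuity, and deduce surjectivity from the density of the ranges of $\mathcal T$ and of ${\sf id}\otimes{\fscr F}$.
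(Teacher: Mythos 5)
Your proposal is correct and follows essentially the same route as the paper: both factor $\mathcal W^\tau$ into a unitary change of variables on $L^2(\G\times\G)$ (valid by unimodularity and invariance of the Haar measure) composed with the partial Fourier--Plancherel unitary. The only cosmetic difference is that you peel off the $\tau$-dependence as the fiberwise unitary multiplication $\phi(\xi,x)\mapsto\xi(\tau x)\phi(\xi,x)$ after the Fourier transform (cf.\ \eqref{lividiu}), whereas the paper absorbs it directly into the $\tau$-dependent change of variables ${\rm cv}^\tau$ of \eqref{religie}.
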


\begin{proof}
Let us define the change of variables 
\begin{equation}\label{religie}
{\rm cv}^\tau:\G\times\G\rightarrow\G\times\G\,,\quad {\rm cv}^\tau(x,y):=\big(x\tau(y^{-1}x)^{-1}\!,y^{-1}x\big)
\end{equation}
with inverse
\begin{equation}
\big({\rm cv}^\tau\big)^{-1}(x,y)=\big(x\tau(y),x\tau(y)y^{-1}\big)\,.
\end{equation}
Using the definition and the interpretation \eqref{tauriel}, one has for $\varphi_\xi\in\H_\xi$
$$
\begin{aligned}
\mathcal W^\tau_{u,v}(\xi,x)\varphi_\xi
=&\int_\G \[W^\tau\!(\xi,x)(\overline u\otimes\varphi_\xi)\]\!(z)v(z)\,d\m(z)\\
=&\int_\G v(z)\,\overline{u(zx^{-1})}\,\xi\!\(z\tau(x)^{-1}\)^*\!\varphi_\xi\,d\m(z)\\
=&\int_\G v\!\(y\tau(x)\)\overline{u(y\tau(x)x^{-1})}\,\xi\!\(y\)^*\!\varphi_\xi\,d\m(y)\\
=&\int_\G (v\otimes\overline{u})\big[\big({\rm cv}^\tau\big)^{-1}(y,x)\big]\,\xi\!\(y\)^*\!\varphi_\xi\,d\m(y)\,.
\end{aligned}
$$
By using the properties of the Haar measure and the unimodularity of $\G$\,, it is easy to see that the composition with the map ${\rm cv}^\tau$, denoted by ${\rm CV}^\tau$, is a unitary operator in $L^2(\G\times\G)\cong L^2(\G)\otimes L^2(\G)$\,. On the other hand, the conjugation $\overline{L^2(\G)}\ni w\mapsto\overline w\in L^2(\G)$ is also unitary.
Making use of the unitary partial Fourier transformation 
$$
({\fscr F}\otimes{\sf id}):L^2(\G)\otimes L^2(\G)\rightarrow \mathscr B^2(\wG)\otimes L^2(\G)\,,
$$ 
one gets 
\begin{equation}\label{ostrogot}
\mathcal W^\tau_{u,v}=({\fscr F}\otimes{\sf id})\big({\rm CV}^\tau\big)^{-1}(v\otimes\overline u)
\end{equation} 
and the statement follows.
\end{proof}

The unitarity of the Fourier-Wigner transformation implies the next irreducibility result:

\begin{Corollary}\label{starnac}
Let $\mathcal K$ be a closed subspace of $L^2(\G)$ such that $\,W^\tau\!(\xi,x)(\mathcal K\otimes\H_\xi)\subset\mathcal K\otimes\H_\xi\,$ for every $(\xi,x)\in\widehat\Gamma$\,.
Then $\mathcal K=\{0\}$ or $\mathcal K=L^2(\G)$\,.
\end{Corollary}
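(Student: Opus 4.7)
The plan is to exploit the unitarity (hence injectivity) of the Fourier--Wigner $\tau$-transformation $\mathcal W^\tau$ from Proposition \ref{mataus}, together with the identity \eqref{suspinare} that links $\mathcal W^\tau_{u,v}$ to the matrix coefficients of $W^\tau$.

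First I would suppose $\mathcal K \neq \{0\}$ and $\mathcal K \neq L^2(\G)$, and pick nonzero vectors $u \in \mathcal K$ and $v \in \mathcal K^\perp$. For arbitrary $\varphi_\xi, \psi_\xi \in \H_\xi$ and $(\xi, x) \in \widehat{\Gamma}$, the invariance assumption gives $W^\tau(\xi, x)(u \otimes \varphi_\xi) \in \mathcal K \otimes \H_\xi$, which is orthogonal to $v \otimes \psi_\xi$ in $L^2(\G; \H_\xi)$. Consequently
\begin{equation*}
\bigl\langle W^\tau(\xi, x)(u \otimes \varphi_\xi),\, v \otimes \psi_\xi \bigr\rangle_{L^2(\G; \H_\xi)} = 0.
\end{equation*}

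Next, I would apply \eqref{suspinare} with $u$ replaced by $\overline u$ and $v$ replaced by $\overline v$ (noting that $\overline{\overline u} = u$ and $\overline{\overline v} = v$) to rewrite the vanishing as
\begin{equation*}
\bigl\langle \mathcal W^\tau_{\overline u,\, \overline v}(\xi, x) \varphi_\xi,\, \psi_\xi \bigr\rangle_{\H_\xi} = 0.
\end{equation*}
Since this holds for all $\varphi_\xi, \psi_\xi$ and all $(\xi, x) \in \widehat{\Gamma}$, we conclude that $\mathcal W^\tau_{\overline u, \overline v} = 0$ as an element of $\mathscr B^2(\widehat \Gamma)$.

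Finally, by Proposition \ref{mataus}, $\mathcal W^\tau$ is a unitary map on $\overline{L^2(\G)} \otimes L^2(\G)$, so it is injective on simple tensors with norm $\|\overline u\|\, \|\overline v\| = \|u\|\, \|v\|$. The vanishing above then forces $u = 0$ or $v = 0$, contradicting the choice of $u$ and $v$. Hence either $\mathcal K = \{0\}$ or $\mathcal K = L^2(\G)$.

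The only subtle point is keeping track of the complex conjugations in Definition \ref{andrei}; this is handled automatically by invoking \eqref{suspinare} with the conjugated arguments, since conjugation is an antiunitary bijection of $L^2(\G)$ and therefore preserves the property of a vector being nonzero.
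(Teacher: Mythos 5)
Your proposal is correct and follows essentially the same route as the paper's proof: use the invariance hypothesis together with identity \eqref{suspinare} to conclude that the Fourier--Wigner transform of the relevant pair vanishes, then invoke the unitarity of $\mathcal W^\tau$ (Proposition \ref{mataus}) to force one of the vectors to be zero. The only difference is the cosmetic bookkeeping of conjugations (the paper picks $\overline u\in\mathcal K$, $\overline v\in\mathcal K^\perp$ directly, whereas you conjugate the arguments in \eqref{suspinare}), which does not change the argument.
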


\begin{proof}
Suppose that $\mathcal K\ne L^2(\G)$ and let $\overline{v}\in\mathcal K^\perp\setminus\{0\}$\,. 

Let us examine the identity \eqref{suspinare}, where $\overline u\in\mathcal K$\,, $(\xi,x)\in\widehat\Gamma$ and $\varphi_\xi,\psi_\xi\in \H_\xi$\,. Since $W^\tau\!(\xi,x)(\overline{u}\otimes\varphi_\xi)\in\mathcal K\otimes\H_\xi$\,, the right hand side is zero. So the left hand side is also zero for $\varphi_\xi,\psi_\xi$ arbitrary, so $\mathcal W^\tau_{u,v}(\xi,x)=0$\,. Then, by unitarity
$$
\p\!u\!\p^2_{L^2(\G)}\p\!v\!\p^2_{L^2(\G)}\,=\,\p\!\mathcal W^\tau_{u,v}\!\p^2_{\mathscr B^2(\widehat\Gamma)}\,=\int_\G\int_{\wG}\p\!\mathcal W^\tau_{u,v}(\xi,x)\!\p^2_{\mathbb B^2(\H_\xi)}\!d\m(x)d\wm(\xi)=0\,,
$$
and since $v\ne 0$ one must have $u=0$\,.
\end{proof}

Depending on the point of view, one uses one of the notations $\mathcal W^\tau_{u,v}$ or $\mathcal W^\tau\!(u\otimes v)$\,. We also introduce 
\begin{equation}\label{verner}
\mathcal V^\tau_{u,v}\equiv\mathcal V^\tau(u\otimes v):=({\fscr F}^{-1}\otimes{\fscr F})\mathcal W^\tau_{v,u}
=({\sf id}\otimes{\fscr F})\big({\rm CV}^\tau\big)^{-1}(v\otimes\overline u)\in L^2(\G)\otimes\mathscr B^2(\wG)\,,
\end{equation}
which reads explicitly
\begin{equation*}\label{ostrogoth}
\mathcal V^\tau_{u,v}(x,\xi)=\int_\G \overline{u\big(x\tau(y)y^{-1}\big)}v\big(x\tau(y)\big)\xi(y)^*d\m(y)\,.
\end{equation*}
One can name the unitary mapping $\mathcal V^\tau:\overline{L^2(\G)}\otimes L^2(\G)\rightarrow\mathscr B^2(\Gamma)$ {\it the Wigner $\tau$-transformation}.
We record for further use {\it the orthogonality relations}, valid for $u,u',v,v'\in L^2(\G)$\,:

\begin{equation}\label{warner}
\big\<\mathcal W^\tau_{u,v},\mathcal W^\tau_{u',v'}\big\>_{\!\mathscr B^2(\widehat\Gamma)}=\big\<u',u\big\>_{\!L^2(\G)}\big\<v,v'\big\>_{\!L^2(\G)}=\big\<\mathcal V^\tau_{u,v},\mathcal V^\tau_{u',v'}\big\>_{\!\mathscr B^2(\Gamma)}\,.
\end{equation}

\subsection{Pseudo-differential operators}\label{teletin}

Let, as before, $\tau:\G\rightarrow\G$ be a measurable map. The next definition should be seen as a rigorous way to give sense to
the $\tau$-quantization ${\sf Op}^\tau\!(a)$ introduced in \eqref{bilfred}. 

We note that in general, due to various non-commutativities (of the group, of the symbols), there are essentially
two ways of introducing the quantization of this type - these will be given and discussed in the sequel in Section \ref{fourtamin},
see especially formulae \eqref{fred0d} and \eqref{fred0s}. In the context of compact Lie groups these issues have been 
extensively discussed in \cite{RT}, see e.g. Remark 10.4.13 there, and most of that discussion extends to our present setting. 
One advantage of the order of operators in the definition
\eqref{bilfred} is that the invariant operators can be viewed as Fourier multipliers with multiplication by the symbol from the left
\eqref{EQ:fm1},
which is perhaps a more familiar way of viewing such operators in non-commutative harmonic analysis. However, it will turn out that the other ordering has certain advantages from the point of view of $C^*$-algebra theories. We postpone these topics to subsequent sections.

\begin{Definition}\label{dietrich}
For $a\in\mathscr B^2(\Gamma)$ (with Fourier transform $\,\widehat a:=\big({\fscr F}\otimes{\fscr F}^{-1}\big)a\in\mathscr B^2(\widehat\Gamma)$) we define ${\sf Op}^\tau\!(a)$ to be the unique bounded linear operator in $L^2(\G)$ associated by the relation 
\begin{equation}\label{rubin}
{\rm op}^\tau_a(u,v)=\big\<{\sf Op}^\tau\!(a) u,v\big\>_{L^2(\G)}
\end{equation} 
to the bounded sesquilinear form ${\rm op}^\tau_a:L^2(\G)\times L^2(\G)\rightarrow\mathbb C$
\begin{equation}\label{fernandaa}
{\sf op}^\tau_a(u,v):=\big\<\widehat a,\mathcal W^\tau_{u,v}\big\>_{\!\mathscr B^2(\widehat\Gamma)}\,=\int_{\G}\!\int_{\wG}\,{\rm Tr}_\xi\!\left[\widehat a(\xi,x)\mathcal W^\tau_{u,v}(\xi,x)^*\right]d\m(x)d\wm(\xi)
\end{equation}
or, equivalently,
\begin{equation}\label{fernandell}
{\sf op}^\tau_a(u,v):=\big\<a,\mathcal V^\tau_{u,v}\big\>_{\!\mathscr B^2(\Gamma)}\,=\int_{\G}\!\int_{\wG}\,{\rm Tr}_\xi\!\left[a(x,\xi)\mathcal V^\tau_{u,v}(x,\xi)^*\right]d\m(x)d\wm(\xi)\,.
\end{equation}
One says that ${\sf Op}^\tau\!(a)$ is {\rm the $\tau$-pseudo-differential operator corresponding to the operator-valued symbol} $a$ while the map $a\to{\sf Op}^\tau\!(a)$ will be called {\rm the $\tau$-pseudo-differential calculus} or {\rm $\tau$-quantization}.
\end{Definition}

To justify Definition \ref{dietrich}, one must show that ${\rm op}^\tau_a$ is indeed a well-defined bounded sesquilinear form. 
Clearly ${\rm op}^\tau_a(u,v)$ is linear in $u$ and antilinear in $v$\,. Using the Cauchy-Schwartz inequality in the Hilbert space $\mathscr B^2(\widehat\Gamma)$\,, the Plancherel formula and Proposition \ref{mataus}, one gets
$$
|{\sf op}^\tau_a(u,v)|\le\,\p\!\widehat a\!\p_{\mathscr B^2(\widehat\Gamma)}\p\!\mathcal W^\tau_{u,v} \!\p_{\mathscr B^2(\widehat\Gamma)}\,=\,\p\!a\!\p_{\mathscr B^2(\Gamma)}\p\! u\!\p_{L^2(\G)}\p\! v\!\p_{L^2(\G)}. 
$$
This implies in particular the estimation $\p\!{\sf Op}^\tau\!(a)\!\p_{\mathbb B[L^2(\G)]}\,\le\,\p\!a\!\p_{\mathscr B^2(\Gamma)}$\,. This will be improved in the next result, in which we identify the rank-one, the trace-class and the Hilbert-Schmidt operators in $L^2(\G)$ as $\tau$-pseudo-differential operators.

\begin{Theorem}\label{carnat}
\begin{enumerate}
\item
Let us define by 
\begin{equation*}\label{god}
\Lambda_{u,v}(w):=\<w,u\>_{L^2(\G)}\,v\,,\quad\forall\,w\in L^2(\G)
\end{equation*}
the rank-one operator associated to the pair of vectors $(u,v)$\,. Then one has
\begin{equation}\label{safir}
\Lambda_{u,v}={\sf Op}^\tau\!\(\mathcal V^\tau_{u,v}\)\,,\quad\forall\,u,v\in L^2(\G)\,.
\end{equation}
\item
Let $\,T$ be a trace-class operator in $L^2(\G)$\,. Then there exist orthonormal seqences $(u_n)_{n\in\N}$\,,
$(v_n)_{n\in\N}$ and a sequence $(\lambda_n)_{n\in\N}\subset\mathbb C$ with $\,\sum_{n\in\N}|\lambda_n|<\infty$ such that
\begin{equation}\label{fardan}
T=\sum_{n\in\N}\lambda_n{\sf Op}^\tau\!\(\mathcal V^\tau_{u_n,v_n}\).
\end{equation}
\item
The mapping ${\sf Op}^\tau$ sends unitarily $\mathscr B^2(\Gamma)$ in the Hilbert space composed of all Hilbert-Schmidt operators in $L^2(\G)$\,.
\end{enumerate}
\end{Theorem}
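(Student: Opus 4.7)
For part (1), the plan is to match matrix elements against arbitrary test vectors. For any $w_1,w_2\in L^2(\G)$, the defining relation \eqref{fernandell} combined with the orthogonality relation \eqref{warner} gives
\begin{equation*}
\bigl\<{\sf Op}^\tau\!(\mathcal V^\tau_{u,v})w_1,w_2\bigr\>_{\!L^2(\G)}
=\bigl\<\mathcal V^\tau_{u,v},\mathcal V^\tau_{w_1,w_2}\bigr\>_{\!\mathscr B^2(\Gamma)}
=\<w_1,u\>_{L^2(\G)}\<v,w_2\>_{L^2(\G)},
\end{equation*}
and the right-hand side is precisely $\<\Lambda_{u,v}w_1,w_2\>_{L^2(\G)}$ by the very definition of the rank-one operator $\Lambda_{u,v}$.

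For part (2), I would invoke the canonical (Schmidt) decomposition of a trace-class operator $T$ on $L^2(\G)$: there exist orthonormal sequences $(u_n),(v_n)\subset L^2(\G)$ and scalars $(\lambda_n)$ with $\sum_n|\lambda_n|=\|T\|_{\B^1[L^2(\G)]}<\infty$ such that $T=\sum_n\lambda_n\Lambda_{u_n,v_n}$ in trace norm. By the orthogonality relation we have $\|\mathcal V^\tau_{u_n,v_n}\|_{\mathscr B^2(\Gamma)}=\|u_n\|_{L^2(\G)}\|v_n\|_{L^2(\G)}=1$, so the series $\sum_n\lambda_n\mathcal V^\tau_{u_n,v_n}$ converges absolutely in $\mathscr B^2(\Gamma)$. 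Applying the bounded linear map ${\sf Op}^\tau$ termwise and using part (1) then produces \eqref{fardan}.

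For part (3), my plan is to factor ${\sf Op}^\tau$ as a composition of two unitaries. Let $\Phi:\overline{L^2(\G)}\otimes L^2(\G)\to\B^2[L^2(\G)]$ be the unitary determined on elementary tensors by $\Phi(\overline u\otimes v):=\Lambda_{u,v}$; this is the concrete realisation of the identification $\B^2(\H)\cong\H\otimes\overline\H$ recalled in Subsection \ref{brocart}, the isometry property following from $\Lambda_{u_1,v_1}^*=\Lambda_{v_1,u_1}$. By Proposition \ref{mataus} together with \eqref{verner}, the map $\mathcal V^\tau:\overline{L^2(\G)}\otimes L^2(\G)\to\mathscr B^2(\Gamma)$ is also unitary. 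Part (1) says exactly that ${\sf Op}^\tau\circ\mathcal V^\tau=\Phi$ on elementary tensors, so by linearity the unitary $\Psi:=\Phi\circ(\mathcal V^\tau)^{-1}:\mathscr B^2(\Gamma)\to\B^2[L^2(\G)]$ coincides with ${\sf Op}^\tau$ on the dense subspace spanned by the vectors $\mathcal V^\tau_{u,v}$. Since ${\sf Op}^\tau$ is continuous from $\mathscr B^2(\Gamma)$ into $\B[L^2(\G)]$ by the bound recorded just before the theorem, and $\Psi$ is continuous into the finer space $\B^2[L^2(\G)]\hookrightarrow\B[L^2(\G)]$, the two continuous maps into $\B[L^2(\G)]$ agree on a dense subspace, hence everywhere, so ${\sf Op}^\tau=\Psi$ is a unitary onto $\B^2[L^2(\G)]$.

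The one step that needs genuine care is this final density/continuity argument: one must pay attention that the target topology on which ${\sf Op}^\tau$ is a priori known to be continuous is the operator-norm topology, weaker than the Hilbert-Schmidt topology in which $\Psi$ is defined; fortunately the continuous inclusion $\B^2[L^2(\G)]\hookrightarrow\B[L^2(\G)]$ is exactly what makes the agreement on a dense subspace propagate and forces the image of ${\sf Op}^\tau$ to lie in $\B^2[L^2(\G)]$.
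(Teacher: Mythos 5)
Your proposal is correct and follows essentially the same route as the paper: part (1) by matching matrix elements via \eqref{fernandell} and the orthogonality relations \eqref{warner}, part (2) by the canonical trace-class decomposition $T=\sum_n\lambda_n\Lambda_{u_n,v_n}$, and part (3) by the factorisation ${\sf Op}^\tau=\Lambda\circ(\mathcal V^\tau)^{-1}$ through the two unitaries $\Lambda$ and $\mathcal V^\tau$ (the paper states this factorisation as \eqref{gott}, while you additionally spell out the density-and-continuity argument it leaves implicit). No gaps.
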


\begin{proof}
1. By the definition \eqref{fernandell} and the orthogonality relations \eqref{warner}, one has for $u',v'\in L^2(\G)$
$$
\begin{aligned}
\big\<{\sf Op}^\tau\!(\mathcal V^\tau_{u,v}) u',v'\big\>_{L^2(\G)}&=\big\<\mathcal V^\tau_{u,v},\mathcal V^\tau_{u',v'}\big\>_{\mathscr B^2(\Gamma)}\\
&=\big\<u',u\big\>_{\!L^2(\G)}\big\<v,v'\big\>_{\!L^2(\G)}\\
&=\big\<\Lambda_{u,v}u',v'\big\>_{L^2(\G)}.
\end{aligned}
$$

2. Follows from 1 and from the fact \cite[pag. 494]{Tr} that every trace-class operator $T$ can be written as $\,T=\sum_{n\in\N}\lambda_n\Lambda_{u_n,v_n}$ with $u_n,v_n,\lambda_n$ as in the statement.

\medskip
3. One recalls that $\Lambda$ defines (by extension) a unitary map $\,\overline{L^2(\G)}\otimes L^2(\G)\to\mathbb B^2\big[L^2(\G)\big]$ and that $\V^\tau$ is also unitary and note that 
\begin{equation}\label{gott}
{\sf Op}^\tau=\Lambda\circ\big(\mathcal V^\tau\big)^{-1}=\Lambda\circ\big(\mathcal W^\tau\big)^{-1}\circ\big({\fscr F}\otimes{\fscr F}^{-1}\big)\,.
\end{equation}
Another proof consists in examining the integral kernel of ${\sf Op}^\tau\!(a)$ given in Proposition \ref{frizt}.
\end{proof}

The unitarity of the map ${\sf Op}^\tau$ can be written in the form
\begin{equation*}\label{ostro}
{\rm Tr}\big[{\sf Op}^\tau\!(a){\sf Op}^\tau\!(b)^*\big]=\int_{\G}\int_{\wG}{\rm Tr}_\xi\big[a(x,\xi)b(x,\xi)^*\big]d\m(x)d\wm(\xi)\,,
\end{equation*}
where ${\rm Tr}$ refers to the trace in $\mathbb B\big[L^2(\G)\big]$\,.

\begin{Proposition}\label{frizt}
If $\,a\in\mathscr B^2(\Gamma)$\,, then ${\sf Op}^\tau\!(a)$ is an integral operator with kernel $\,{\sf Ker}^\tau_a\in L^2(\G\times\G)$ given by 
\begin{equation}\label{dinou}
{\sf Ker}^\tau_a(x,y):={\rm CV}^\tau({\sf id}\otimes{\fscr F}^{-1})a\,.
\end{equation}
\end{Proposition}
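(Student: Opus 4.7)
The plan is to unfold the defining sesquilinear form \eqref{fernandell} using the explicit factorization of $\mathcal V^\tau$ and read off the integral kernel directly. From \eqref{verner} I have at hand the identity
\begin{equation*}
\mathcal V^\tau_{u,v}=({\sf id}\otimes{\fscr F})\bigl({\rm CV}^\tau\bigr)^{-1}(v\otimes\overline u)\in L^2(\G)\otimes\mathscr B^2(\wG)\,,
\end{equation*}
so substituting into \eqref{fernandell} gives
\begin{equation*}
\bigl\langle{\sf Op}^\tau(a)u,v\bigr\rangle_{L^2(\G)}=\bigl\langle a,\,({\sf id}\otimes{\fscr F})({\rm CV}^\tau)^{-1}(v\otimes\overline u)\bigr\rangle_{\mathscr B^2(\Gamma)}\,.
\end{equation*}

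Next I will use that both ${\sf id}\otimes{\fscr F}:L^2(\G\times\G)\to\mathscr B^2(\Gamma)$ and ${\rm CV}^\tau:L^2(\G\times\G)\to L^2(\G\times\G)$ are unitary (the former by the Plancherel theorem, the latter verified in the proof of Proposition~\ref{mataus}), so their adjoints coincide with their inverses. Transferring them to the left slot of the inner product yields
\begin{equation*}
\bigl\langle{\sf Op}^\tau(a)u,v\bigr\rangle_{L^2(\G)}=\bigl\langle{\rm CV}^\tau({\sf id}\otimes{\fscr F}^{-1})a,\,v\otimes\overline u\bigr\rangle_{L^2(\G\times\G)}\,.
\end{equation*}
Setting $K:={\rm CV}^\tau({\sf id}\otimes{\fscr F}^{-1})a$ and expanding the right-hand side as an integral on $\G\times\G$, the complex conjugate $\overline{v(x)\overline u(y)}=\overline{v(x)}u(y)$ appears, giving
\begin{equation*}
\bigl\langle{\sf Op}^\tau(a)u,v\bigr\rangle_{L^2(\G)}=\int_{\G}\!\int_{\G}K(x,y)u(y)\overline{v(x)}\,d\m(y)d\m(x)\,,
\end{equation*}
which exhibits ${\sf Op}^\tau(a)$ as the integral operator with kernel $K={\sf Ker}^\tau_a$. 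Membership $K\in L^2(\G\times\G)$ is automatic since $K$ is produced from $a\in\mathscr B^2(\Gamma)$ by a composition of unitaries; one even obtains the norm identity $\p\!K\!\p_{L^2(\G\times\G)}=\p\!a\!\p_{\mathscr B^2(\Gamma)}$, in perfect agreement with Theorem~\ref{carnat}(3).

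No single step poses a genuine obstacle: the computation is essentially forced once the factorization of $\mathcal V^\tau$ from \eqref{verner} is available. The only care required is adjoint bookkeeping — specifically $({\sf id}\otimes{\fscr F})^*={\sf id}\otimes{\fscr F}^{-1}$ together with the conjugation carried by the second factor $\overline u$, which is precisely what produces the combination $u(y)\overline{v(x)}$ under the integral sign, so that $K$ serves as a kernel in the classical sense rather than its complex conjugate.
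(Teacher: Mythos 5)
Your proposal is correct and follows essentially the same route as the paper's own proof: both start from \eqref{fernandell}, insert the factorization $\mathcal V^\tau_{u,v}=({\sf id}\otimes{\fscr F})({\rm CV}^\tau)^{-1}(v\otimes\overline u)$, transfer the unitaries ${\sf id}\otimes{\fscr F}$ and ${\rm CV}^\tau$ across the inner product, and expand in $L^2(\G\times\G)$ to read off the kernel. The conjugation bookkeeping and the observation that $K\in L^2(\G\times\G)$ with norm equal to $\p a\p_{\mathscr B^2(\Gamma)}$ are exactly as in the paper (the latter being implicit there).
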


\begin{proof}
Using the definitions, Plancherel's Theorem and the unitarity of ${\sf CV}^\tau$, one gets
$$
\begin{aligned}
\<{\sf Op}^\tau\!(a) u,v\>_{L^2(\G)}:=&\,\,\big\<a,\mathcal V^\tau_{u,v}\big\>_{\!\mathscr B^2(\Gamma)}\\
=&\,\,\Big\<a,({\sf id}\otimes{\fscr F})\big({\rm CV}^\tau\big)^{-1}(v\otimes\overline u)\Big\>_{\!L^2(\G)\otimes\mathscr B^2(\wG)}\\
=&\,\,\Big\<({\sf id}\otimes{\fscr F}^{-1})a,\big({\rm CV}^\tau\big)^{-1}(v\otimes\overline u)\Big\>_{\!L^2(\G)\otimes L^2(\G)}\\
=&\,\,\Big\<{\rm CV}^\tau({\sf id}\otimes{\fscr F}^{-1})a,(v\otimes\overline u)\Big\>_{\!L^2(\G)\otimes L^2(\G)}\\
=&\int_\G\int_\G \big[{\rm CV}^\tau({\sf id}\otimes{\fscr F}^{-1})a\big](x,y)(\overline v\otimes u)(x,y)d\m(y)d\m(x)\\
=&\int_\G\Big(\int_\G \big[{\rm CV}^\tau({\sf id}\otimes{\fscr F}^{-1})a\big](x,y)u(y)d\m(y)\Big) \overline{v(x)}d\m(x)\,,
\end{aligned}
$$
completing the proof.
\end{proof}

\begin{Remark}\label{emarc}
{\rm 
We rephrase Proposition \ref{frizt} as 
\begin{equation}\label{zaruz}
{\sf Op}^\tau={\sf Int}\circ{\sf Ker}^\tau={\sf Int}\circ{\rm CV}^\tau\!\circ({\sf id}\otimes{\fscr F}^{-1})\,,
\end{equation}
where ${\sf Int}:L^2(\G\times\G)\rightarrow\mathbb B^2\big[L^2(\G)\big]$ is given by 
\begin{equation*}\label{ruj}
[{\sf Int}(M)u](x):=\int_\G M(x,y)u(y)d\m(y)\,.
\end{equation*}
Now we see that ${\sf Op}^\tau$ actually coincides with the one defined in \eqref{bilfred}, at least in a certain sense. Formally, using \eqref{dinou}, one gets
\begin{equation}\label{siegfrid}
{\sf Ker}^\tau_a(x,y)=\int_{\wG}{\rm Tr}_\xi\Big[a\big(x\tau(y^{-1}x)^{-1}\!,\xi\big)\xi(y^{-1}x)\Big]\,d\wm(\xi)
\end{equation}
and this should be compared to \eqref{bilfred}. The formula \eqref{siegfrid} is rigorously correct if, for instance, the symbol $a$ belongs to $({\sf id}\otimes{\fscr F})\mathcal C_{\rm c}(\G\times\G)$\,, since the explicit form \eqref{marian} of the inverse Fourier transform holds on ${\fscr F}\mathcal C_{\rm c}(\G)\subset {\fscr F}\big[A(\G)\cap L^2(\G)\big]=\mathscr B^1(\wG)\cap\mathscr B^2(\wG)$\,. Thus we reobtain the formula \eqref{bilfred} as
\begin{equation}\label{bilfred3}
\begin{aligned}
\[{\sf Op}^\tau\!(a)u\]\!(x) &= \int_\G  {\sf Ker}^\tau_a(x,y) u(y) d\m(y)\\
&=\int_\G\!\Big(\int_{\wG}\,{\rm Tr}_\xi\Big[\xi(y^{-1}x)a\big(x\tau(y^{-1}x)^{-1}\!,\xi\big)\Big]d\wm(\xi)\Big)u(y)d\m(y)\,.
\end{aligned}
\end{equation}
}
\end{Remark}

\begin{Remark}\label{paul}
{\rm If $\tau,\tau'\!:\G\rightarrow\G$ are measurable maps, the associated pseudo-differential calculi are related by ${\sf Op}^{\tau'}\!\!(a)={\sf Op}^\tau\!(a_{\tau\tau'})$ where, based on \eqref{zaruz}, one gets
\begin{equation}\label{david}
({\sf id}\otimes{\fscr F}^{-1})a_{\tau\tau'}=\big[({\sf id}\otimes{\fscr F}^{-1})a\big]\circ{\sf cv}^{\tau'}\circ\big({\sf cv}^\tau\big)^{-1}.
\end{equation}
One computes easily
\begin{equation}\label{estrecho}
{\sf cv}^{\tau'\tau}(x,y):=\big[{\sf cv}^{\tau'}\!\circ\big({\sf cv}^\tau\big)^{-1}\big](x,y)=\big(x\tau(y)\tau'(y)^{-1}\!,y\big)\,.
\end{equation}
However, it seems difficult to turn this into a nice explicit formula for $a_{\tau\tau'}$\,, but this is already the case in the Euclidean space too. The crossed product realisation is nicer from this point of view (when ``turned to the right").  Using \eqref{radar} one can write
\begin{equation}\label{nexiune}
{\sf Sch}^{\tau'}\!(\Phi)={\sf Sch}^{\tau}(\Phi_{\tau\tau'})\,,
\end{equation}
with $\Phi_{\tau\tau'}=\Phi\circ{\sf cv}^{\tau'\tau}$. See also Remark \ref{labush}.}
\end{Remark}

\subsection{Involutive algebras of symbols}\label{fraterin}

Since our pseudo-differential calculus is one-to-one, we can define an involutive algebra structure on operator-valued symbols, emulating the algebra of operators.
One defines a composition law $\,\#_\tau$ and an involution $\,^{\#_\tau}$ on $\mathscr B^2(\widehat\Gamma)$ by 
\begin{equation*}\label{topaz}
{\sf Op}^\tau\!(a\#_\tau b):={\sf Op}^\tau\!(a){\sf Op}^\tau\!(b)\,,
\end{equation*}
\begin{equation*}\label{toppaz}
{\sf Op}^\tau\!(a^{\#_\tau}):={\sf Op}^\tau\!(a)^*.
\end{equation*}
The composition can be written in terms of integral kernels as
\begin{equation*}\label{intker}
{\sf Ker}^\tau_{a\#_\tau b}={\sf Ker}^\tau_a\bullet{\sf Ker}^\tau_b\,,
\end{equation*}
where, by \eqref{zaruz}, 
\begin{equation*}\label{zuruz}
{\sf Ker}^\tau\!:={\rm CV}^\tau\!\circ({\sf id}\otimes{\fscr F}^{-1})
\end{equation*} 
and $\,\bullet\,$ is the usual composition of kernels
\begin{equation*}\label{intco}
(M\bullet N)(x,y):=\int_\G M(x,z)N(z,y) d\m(z)\,,
\end{equation*} 
corresponding to ${\sf Int}(M\bullet N)={\sf Int}(M){\sf Int}(N)$\,.
It follows that for $a,b\in\mathscr B^2(\widehat\Gamma)$
\begin{equation}\label{compoz}
\begin{aligned}
a\#_\tau b&=\big({\sf Ker}^\tau\big)^{-1}\big({\sf Ker}^\tau_a\bullet{\sf Ker}^\tau_b\big)\\
&=({\sf id}\otimes{\fscr F})\circ({\rm CV}^\tau)^{-1}\Big\{\big[{\rm CV}^\tau\circ({\sf id}\otimes{\fscr F}^{-1})\big]a\bullet\big[{\rm CV}^\tau\circ({\sf id}\otimes{\fscr F}^{-1})\big]b\Big\}\,.
\end{aligned}
\end{equation}

Similarly, in terms of the natural kernel involution $M^\bullet(x,y):=\overline{M(y,x)}$ (corresponding to ${\sf Int}(M)^*={\sf Int}(M^\bullet)$)\,, one gets
\begin{equation}\label{invo}
a^{\#_\tau}=\big({\sf Ker}^\tau\big)^{-1}\big[({\sf Ker}^\tau_a)^\bullet\big]=({\sf id}\otimes{\fscr F})\circ({\rm CV}^\tau)^{-1}\Big\{\Big(\big[{\rm CV}^\tau\circ({\sf id}\otimes{\fscr F}^{-1})\big]a\Big)^{\!\bullet}\Big\}\,.
\end{equation}

\begin{Remark}\label{hilbalg}
{\rm As a conclusion, $\big(\mathscr B^2(\Gamma),\#_\tau,^{\#_\tau}\big)$ is a $^*$-algebra. This is part of a more detailed result, stating that $\Big(\mathscr B^2(\Gamma),\<\cdot,\cdot\>_{\mathscr B^2(\Gamma)},\#_\tau,^{\#_\tau}\Big)$ is an $H^*$-algebra, i.e. a complete Hilbert algebra \cite[App. A]{Di}. Among others, this contains the following compatibility relations between the scalar product and the algebraic laws
\begin{equation*}\label{pantazi}
\big\<a\#_\tau b,c\big\>_{\mathscr B^2(\Gamma)}=\big\<a, b^{\#_\tau}\#_\tau c\big\>_{\mathscr B^2(\Gamma)}\,,
\end{equation*}
\begin{equation*}\label{pantatzi}
\<a,b\>_{\mathscr B^2(\Gamma)}=\big\<b^{\#_\tau},a^{\#_\tau}\big\>_{\mathscr B^2(\Gamma)}\,,
\end{equation*}
valid for every $a,b,c\in\mathscr B^2(\Gamma)$\,. The simplest way to prove all these is to recall that $\mathbb B^2\big[L^2(\G)\big]$ is an $ H^*$-algebra with the operator multiplication, with the adjoint and with the complete scalar product $\<S,T\>_{\mathbb B^2}:={\rm Tr}[ST^*]$ and to invoke the algebraic and unitary isomorphism $\mathscr B^2(\Gamma)\overset{{\sf Op}^\tau}{\cong}\mathbb B^2\big[L^2(\G)\big]$\,.
}
\end{Remark}

Formulae \eqref{compoz} and \eqref{invo} take a more explicit integral form on symbols particular enough to allow applying formula \eqref{marian} for the inverse Fourier transform. Since, anyhow, we will not need such formulas, we do not pursue this here. Let us give, however, the simple algebraic rules satisfied by the Wigner $\tau$-transforms defined in \eqref{verner}\,:

\begin{Corollary}\label{wizzi}
For every $u,v,u_1,u_2,v_1,v_2\in L^2(\G)$ one has
\begin{equation}\label{vizi}
\mathcal V^\tau_{u_1,v_1}\#_\tau\,\mathcal V^\tau_{u_2,v_2}=\<v_2,u_1\>\mathcal V^\tau_{u_2,v_1}
\end{equation}
and
\begin{equation}\label{udrea}
\big(\mathcal V^\tau_{u,v}\big)^{\#_\tau}=\mathcal V^\tau_{v,u}\,.
\end{equation}
\end{Corollary}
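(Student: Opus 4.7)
The plan is to reduce both identities to elementary manipulations of rank-one operators by exploiting the key identity \eqref{safir}, namely $\Lambda_{u,v}={\sf Op}^\tau(\mathcal V^\tau_{u,v})$, together with the fact that ${\sf Op}^\tau$ is a bijection (indeed, a unitary) between $\mathscr B^2(\Gamma)$ and $\mathbb B^2[L^2(\G)]$ by Theorem \ref{carnat}. Since the operations $\#_\tau$ and $^{\#_\tau}$ are defined precisely so that ${\sf Op}^\tau$ intertwines them with composition and adjunction of operators, each identity becomes equivalent to a statement about products or adjoints of rank-one operators, and then I can simply invoke injectivity of ${\sf Op}^\tau$.

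For \eqref{vizi}, I apply ${\sf Op}^\tau$ to the left-hand side and use the definition of $\#_\tau$ to obtain
\[
{\sf Op}^\tau\!\big(\mathcal V^\tau_{u_1,v_1}\#_\tau\mathcal V^\tau_{u_2,v_2}\big)={\sf Op}^\tau(\mathcal V^\tau_{u_1,v_1}){\sf Op}^\tau(\mathcal V^\tau_{u_2,v_2})=\Lambda_{u_1,v_1}\Lambda_{u_2,v_2}.
\]
A direct computation on an arbitrary $w\in L^2(\G)$ gives
\[
\Lambda_{u_1,v_1}\Lambda_{u_2,v_2}(w)=\<w,u_2\>\,\Lambda_{u_1,v_1}(v_2)=\<w,u_2\>\<v_2,u_1\>\,v_1=\<v_2,u_1\>\,\Lambda_{u_2,v_1}(w),
\]
so $\Lambda_{u_1,v_1}\Lambda_{u_2,v_2}=\<v_2,u_1\>\Lambda_{u_2,v_1}=\<v_2,u_1\>{\sf Op}^\tau(\mathcal V^\tau_{u_2,v_1})$. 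The identity \eqref{vizi} then follows from the injectivity of ${\sf Op}^\tau$.

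For \eqref{udrea}, I proceed in the same spirit. By the definition of $^{\#_\tau}$,
\[
{\sf Op}^\tau\!\big((\mathcal V^\tau_{u,v})^{\#_\tau}\big)={\sf Op}^\tau(\mathcal V^\tau_{u,v})^*=\Lambda_{u,v}^*.
\]
A one-line calculation with the scalar product (using that $\<\Lambda_{u,v}w,w'\>=\<w,u\>\<v,w'\>$) shows $\Lambda_{u,v}^*=\Lambda_{v,u}={\sf Op}^\tau(\mathcal V^\tau_{v,u})$, and injectivity of ${\sf Op}^\tau$ finishes the argument.

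There is no real obstacle here: once \eqref{safir} and the unitarity of ${\sf Op}^\tau$ are in hand, both assertions are immediate algebraic consequences of the composition and adjoint rules for rank-one operators on $L^2(\G)$. The only minor point worth mentioning is that one could alternatively prove \eqref{vizi} and \eqref{udrea} directly at the symbol level via \eqref{compoz} and \eqref{invo}, but this would require unwinding the kernel composition $\bullet$ and the change of variables ${\rm CV}^\tau$, which is strictly more painful and adds no insight.
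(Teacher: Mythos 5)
Your proof is correct and follows essentially the same route as the paper's: apply ${\sf Op}^\tau$, invoke the rank-one identity \eqref{safir} from Theorem \ref{carnat}, compute the product and adjoint of the operators $\Lambda_{u,v}$, and conclude by linearity and injectivity of ${\sf Op}^\tau$. Nothing is missing.
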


\begin{proof}
The first identity is a consequence of the first point of Theorem \ref{carnat}:
$$
\begin{aligned}
{\sf Op}^\tau\!\!\(\mathcal V^\tau_{u_1,v_1}\#_\tau\,\mathcal V^\tau_{u_2,v_2}\)&={\sf Op}^\tau\!\!\(\mathcal V^\tau_{u_1,v_1}\){\sf Op}^\tau\!\!\(\mathcal V^\tau_{u_2,v_2}\)\\
&=\Lambda_{u_1,v_1}\Lambda_{u_2,v_2}\\
&=\<v_2,u_1\>\Lambda_{u_2,v_1}\\
&=\<v_2,u_1\>{\sf Op}^\tau\!\!\(\mathcal V^\tau_{u_2,v_1}\),
\end{aligned}
$$
which implies \eqref{vizi} because ${\sf Op}^\tau$ is linear and injective.

\medskip
The relation \eqref{udrea} follows similarly, taking into account the identity $\Lambda_{u,v}^*=\Lambda_{v,u}$\,.
\end{proof}

\begin{Remark}\label{cicau}
{\rm It seems convenient to summarise the situation in the following commutative diagram of unitary mappings (which are even isomorphisms of $H^*$-algebras):
$$
\begin{diagram}
\node{L^2(\G)\otimes L^2(\G)} \arrow{e,t}{{\sf id}\otimes{\fscr F}} \arrow{s,l}{{\fscr F}\otimes{\sf id}}\arrow{se,r}{{\sf Sch}^\tau}\node{L^2(\G)\otimes\mathscr B^2(\wG)} \arrow{s,l}{{\sf Op}^\tau} \node{\mathscr B^2(\wG)\otimes L^2(\G)}\arrow{w,t}{{\fscr F}^{-1}\otimes{\fscr F}} \\ 
\node{\mathscr B^2(\wG)\otimes L^2(\G)} \arrow{e,b}{{\sf Po}^\tau} \node{\mathbb B^2\big[L^2(\G)\big]}\node{\overline{L^2(\G)}\otimes L^2(\G)}\arrow{w,b}{\Lambda}\arrow{n,r}{\mathcal W^\tau}\arrow{nw,t}{\mathcal V^\tau}
\end{diagram}
$$
For completeness and for further use we also included two new maps. The first one is given by the formula ${\sf Po}^\tau\!:={\sf Op}^\tau\!\circ\big({\fscr F}^{-1}\otimes{\fscr F}\big)$ and it is the integrated form of the family of operators $\big\{ W^\tau(x,\xi)\mid (x,\xi)\in\G\times\wG\,\big\}$\,, defined formally by 
\begin{equation}\label{wisic}
{\sf Po}^\tau\!(\mathfrak a):=\int_{\G}\int_{\wG}\,{\rm Tr}_\xi\big[\mathfrak a(\xi,x)W^\tau(\xi,x)^*\big]d\m(x)d\wm(\xi)\,.
\end{equation}
Here we can think that $\mathfrak a=\big({\fscr F}\otimes{\fscr F}^{-1}\big) a$\,.
It is treated rigorously in the same way as ${\sf Op}^\tau$; the correct weak definition is to set for $u,v\in L^2(\G)$
\begin{equation}\label{starpor}
\big\<{\sf Po}^\tau\!(\mathfrak a) u,v\big\>_{L^2(\G)}=\big\<{\sf Op}^\tau\big[\big({\fscr F}^{-1}\otimes{\fscr F}\big)(\mathfrak a)\big] u,v\big\>_{L^2(\G)}=\big\<\mathfrak a,\mathcal W^\tau_{u,v}\big\>_{\!\mathscr B^2(\widehat\Gamma)}\,.
\end{equation} 
The second one is the Schr\"odinger representation ${\sf Sch}^\tau\!:={\sf Int}\circ{\rm CV}^\tau$ defined for $\,\Phi\in L^2(\G\times\G)$ by
\begin{equation}\label{radar}
\[{\sf Sch}^\tau\!(\Phi)v\]\!(x):=\int_\G\!\Phi\!\left(x\tau(y^{-1}x)^{-1}\!,y^{-1}x\right)\!v(y)\,d\m(y)\,. 
\end{equation}
It satisfies $\,{\sf Op}^\tau\!={\sf Sch}^\tau\!\circ\big({\sf id}\otimes{\fscr F}^{-1}\big)$ and we put it into evidence because it is connected to the $C^*$-algebraic formalism described in Subsection \ref{firea}.
}
\end{Remark}

\subsection{Non-commutative $L^p$-spaces and Schatten classes}\label{tontolin}

\begin{Definition}\label{gogonatele}
For $p\in[1,\infty)$ we introduce the Banach space $\mathscr B^{p,p}(\widehat\Gamma):=L^p\!\big[\G;\mathscr B^p(\wG)\big]$ with the norm
\begin{equation*}\label{scarbos}
\begin{aligned}
\p\!\mathfrak a\!\p_{\mathscr B^{p,p}(\widehat\Gamma)}\,&:=\Big(\int_\G \p\!\mathfrak a(x)\!\p^p_{\mathscr B^{p}(\wG)}\!d\m(x)\Big)^{1/p}\!\\
&=\Big(\int_\G \Big[\int_{\wG}\p\!\mathfrak a(\xi,x)\!\p^p_{\mathbb B^{p}(\H_\xi)}\!d\wm(\xi)\Big]d\m(x)\Big)^{1/p}\!,
\end{aligned}
\end{equation*}
where the convenient notation $\,\mathfrak a(\xi,x):=[\mathfrak a(x)](\xi)$ has been used.
\end{Definition} 

Note that $\mathscr B^{1,1}(\widehat\Gamma)\cong \mathscr B^1(\wG)\,\overline\otimes\,L^1(\G)$ (projective completed tensor product), while $\mathscr B^{2,2}(\widehat\Gamma)\cong\mathscr B^{2}(\widehat\Gamma)=\mathscr B^2(\wG)\otimes L^2(\G)\,$ (Hilbert tensor product). The double index indicates that the spaces $\mathscr B^{p,q}(\widehat\Gamma):=L^p\!\big[\G;\mathscr B^q(\wG)\big]$ could also be taken into account for $p\ne q$\,.

\medskip
To put the definition in a general context, we recall some basic facts about non-commutative $L^p$-spaces \cite{PX,Xu}. {\it A non-commutative measure space} is a pair $(\mathscr M,\mathcal T\,)$ formed of a von Neumann algebra $\mathscr M$ with positive cone $\mathscr M_+$\,, acting in a Hilbert space $\K$\,, endowed with a normal semifinite faithful trace $\,\mathcal T:\mathscr M_+\to[0,\infty]$\,. One defines  
\begin{equation*}\label{span}
\mathscr S_+:=\{m\in\mathscr M_+\mid \mathcal T[s(m)]<\infty\}\,,
\end{equation*}
where $s(m)$ is {\it the support} of $m$\,, i.e. the smallest orthogonal projection $e\in\mathscr M$ such that $eme=m$\,. Then $\mathscr S$, defined to be the linear span of $\mathscr S_+$\,, is a w$^*$-dense $^*$-subalgebra of $\mathscr M$\,. For every $p\in[1,\infty)$\,, the map $\p\!\cdot\!\p_{(p)}:\mathscr S\rightarrow[0,\infty)$ given by
\begin{equation*}\label{insulting}
\p\!m\!\p_{(p)}\,:=\big[\mathcal T\big(|m|^p\big)\big]^{1/p}=\big[\mathcal T\big((m^*m)^{p/2}\big)\big]^{1/p}
\end{equation*}
is a well-defined norm. The completion of $\big(\mathscr S,\p\!\cdot\!\p_{(p)}\!\!\big)$ is denoted by $\mathscr L^p(\mathscr M,\mathcal T)$ and is called {\it the non-commutative $L^p$-space associated to the non-commutative measure space $(\mathscr M,\mathcal T)$\,.} The scale is completed by setting $\,\mathscr L^\infty(\mathscr M,\mathcal T):=\mathscr M$. It can be shown that $\mathscr L^1(\mathscr M,\mathcal T)$ can be viewed as the predual of $\mathscr M$ and the elements of $\mathscr L^p(\mathscr M,\mathcal T)$ can be interpreted as closed, maybe unbounded, operators in $\K$ \cite{PX}.

\medskip
We are going to need two important properties of these non-commutative $L^p$-spaces.
\begin{itemize}
\item
{\it Duality}: if $p\ne\infty$ and $1/p+1/p'=1$\,, then $\big[\mathscr L^p(\mathscr M,\mathcal T)\big]^*\cong \mathscr L^{p'}(\mathscr M,\mathcal T)$ isometrically; the duality is defined by $\,\<m,n\>_{(p),(p')}\!:=\mathcal T(mn^*)$ (consequence of a non-commutative H\"older inequality).
\item
{\it Interpolation}: the complex interpolation of these spaces follows the rule
\begin{equation*}\label{interpolation}
\big[\mathscr L^{p_0}(\mathscr M,\mathcal T),\mathscr L^{p_1}(\mathscr M,\mathcal T)\big]_\th=\mathscr L^{p}(\mathscr M;\mathcal T)\,,\quad\th\in(0,1)\,,\ \ \frac{1}{p}=\frac{1-\th}{p_0}+\frac{\th}{p_1}\,.
\end{equation*}
\end{itemize}

In our case the non-commutative measure space can be defined as follows: The von Neumann algebra is 
\begin{equation*}
\mathscr B^{\infty,\infty}(\widehat\Gamma)= \mathscr B(\wG)\,\widetilde\otimes\,L^\infty(\G)= \int^\oplus_{\wG}\!\mathbb B(\H_\xi)d\wm(\xi)\,\widetilde\otimes\,L^\infty(\G)
\end{equation*} 
(weak$^*$-completion of the algebraic tensor product). Denoting as before by ${\rm Tr}_\xi$ the standard trace in $\mathbb B(\H_\xi)$\,, then on $\mathscr B(\wG\,)$ one has \cite[Sect II.5.1]{Dix} the direct integral trace $\,{\rm Tr}:=\int^\oplus_{\wG}{\rm Tr}_\xi\,d\wm(\xi)$ and on $\mathscr B(\wG)\,\widetilde\otimes\,L^\infty(\G)$ the tensor product \cite[1.7.5]{Xu} $\,\mathcal T:={\rm Tr}\otimes\int_\G$ of $\,{\rm Tr}$ with the trace given by Haar integration in the commutative von Neumann algebra $L^\infty(\G)$\,. Thus one gets the non-commutative measure space $\big(\mathscr B^{\infty,\infty}(\widehat\Gamma),\mathcal T\big)$\,. It is not difficult to show that the associated non-commutative $L^p$-spaces are the Banach spaces $\mathscr B^{p,p}(\widehat\Gamma)$ introduced in Definition \ref{gogonatele} (\cite[1.7.5]{Xu} is useful again).
In particular, we have the following rule of complex interpolation:
\begin{equation*}\label{interpolation}
\big[\mathscr B^{p_0,p_0}(\widehat\Gamma),\mathscr B^{p_1,p_1}(\widehat\Gamma)\big]_\th=\mathscr B^{p,p}(\widehat\Gamma)\,,\quad\th\in(0,1)\,,\ \frac{1}{p}=\frac{1-\th}{p_0}+\frac{\th}
{p_1}\,.
\end{equation*}

On the other hand, the Schatten-von Neumann ideals $\,\mathbb B^p\!\big[L^2(\G)\big]$ are the non-commutative $L^p$-spaces associated to the non-commutative measure space $\big(\mathbb B\big[L^2(\G)\big],{\sf Tr}\big)$\,. So they interpolate in the same way.

\begin{Proposition}\label{bp}
For every $p\in[2,\infty]$ one has a linear contraction
\begin{equation}\label{lc}
\W^\tau:\overline{L^2(\G)}\otimes L^2(\G)\rightarrow\mathscr B^{p,p}(\widehat\Gamma)\,.
\end{equation}
\end{Proposition}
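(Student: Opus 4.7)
I would prove this by complex interpolation between the two endpoints $p=2$ and $p=\infty$, invoking the interpolation identities for the Schatten ideals $\mathbb{B}^p[L^2(\G)]$ and for the spaces $\mathscr{B}^{p,p}(\widehat\Gamma)$ recalled just above the statement. Under the canonical identification $\Lambda:\overline{L^2(\G)}\otimes L^2(\G)\cong\mathbb{B}^2[L^2(\G)]$ of the Hilbert tensor product with the Hilbert--Schmidt class, the space on the left of \eqref{lc} should be read with the appropriate Schatten norm for each $p$: Hilbert--Schmidt at $p=2$, trace-class (projective) at $p=\infty$, and the interpolated Schatten norms $\mathbb{B}^{p'}$, $1/p+1/p'=1$, in between. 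The conclusion is then that $\W^\tau$ is a contraction $\mathbb{B}^{p'}[L^2(\G)]\to\mathscr{B}^{p,p}(\widehat\Gamma)$. The endpoint $p=2$ is already furnished by Proposition \ref{mataus}, which exhibits $\W^\tau$ as a unitary isomorphism onto $\mathscr{B}^2(\widehat\Gamma)=\mathscr{B}^{2,2}(\widehat\Gamma)$, hence \emph{a fortiori} a contraction.

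\textbf{The $p=\infty$ endpoint.} Starting from the identity \eqref{suspinare} and the fact that each $W^\tau(\xi,x)$ acts unitarily on $L^2(\G;\H_\xi)$, the Cauchy--Schwarz inequality delivers the pointwise rank-one bound
\begin{equation*}
\|\W^\tau_{u,v}(\xi,x)\|_{\mathbb{B}(\H_\xi)}\le\|u\|_{L^2(\G)}\|v\|_{L^2(\G)}\qquad\text{for all }(\xi,x)\in\widehat\Gamma.
\end{equation*}
For a trace-class operator $T$ with Schmidt expansion $T=\sum_n\lambda_n\,u_n\otimes v_n$ (orthonormal $(u_n)$, $(v_n)$, and $\|T\|_{\mathbb{B}^1}=\sum_n|\lambda_n|$), summing the rank-one estimate and applying the triangle inequality in $\mathbb{B}(\H_\xi)$ pointwise in $(\xi,x)$ yields
\begin{equation*}
\|\W^\tau T\|_{\mathscr{B}^{\infty,\infty}(\widehat\Gamma)}\;\le\;\sum_n|\lambda_n|\;=\;\|T\|_{\mathbb{B}^1[L^2(\G)]},
\end{equation*}
i.e.\ the contraction $\W^\tau:\mathbb{B}^1[L^2(\G)]\to\mathscr{B}^{\infty,\infty}(\widehat\Gamma)$.

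\textbf{Interpolation and main obstacle.} The two endpoint bounds $\W^\tau:\mathbb{B}^2\to\mathscr{B}^{2,2}$ and $\W^\tau:\mathbb{B}^1\to\mathscr{B}^{\infty,\infty}$, combined with the complex interpolation identities $[\mathbb{B}^2,\mathbb{B}^1]_\theta=\mathbb{B}^{p'}$ and $[\mathscr{B}^{2,2},\mathscr{B}^{\infty,\infty}]_\theta=\mathscr{B}^{p,p}$ (with $\theta\in[0,1]$ linked to $p,p'$ by $1/p=(1-\theta)/2$ and $1/p+1/p'=1$), yield the desired contraction for every $p\in[2,\infty]$ via the classical complex interpolation theorem for linear operators. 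The principal subtlety is the $p=\infty$ endpoint: the bound there is tight against the trace-class norm and genuinely fails on the Hilbert--Schmidt norm (two orthogonal rank-one summands of HS norm $1/\sqrt2$ can add constructively at a single point of $\widehat\Gamma$ to produce a value of size $\sqrt2$), which is precisely what forces the $p$-dependent interpretation of the source in \eqref{lc}. Once this is made explicit, the interpolation step is routine.
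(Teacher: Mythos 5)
Your proof follows essentially the same route as the paper: the $p=2$ endpoint is Proposition \ref{mataus}, the $p=\infty$ endpoint is the uniform bound $\|\mathcal W^\tau_{u,v}(\xi,x)\|_{\mathbb B(\H_\xi)}\le\|u\|_{L^2(\G)}\|v\|_{L^2(\G)}$ obtained from \eqref{suspinare} together with the unitarity of $W^\tau(\xi,x)$ in $L^2(\G;\H_\xi)$, and the intermediate cases follow by complex interpolation of the non-commutative $L^p$-scales. Your extra care in reading the source space along the Schatten scale $\mathbb B^{p'}[L^2(\G)]$ (via $\Lambda$) at the $p=\infty$ end is a sharpening that the paper leaves implicit, and it is consistent with how the proposition is actually used in Theorem \ref{tomas} and Corollary \ref{thomas}.
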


\begin{proof}
We have seen in Proposition \ref{mataus} that $\W^\tau$ is unitary if $p=2$\,. If we also check the case $p=\infty$\,, then \eqref{lc} follows by complex interpolation. But the uniform estimate
\begin{equation*}\label{est}
\p\!\W^\tau_{u,v}(\xi,x)\!\p_{\mathbb B(\H_\xi)}\,\le\,\p\!u\!\p_{L^2(\G)}\p\!v\!\p_{L^2(\G)}
\end{equation*}
is an immediate consequence of \eqref{suspinare} and of the unitarity of $W^\tau\!(\xi,x)$ in $L^2(\G;\H_\xi)$\,.
\end{proof}

\medskip
For the next two results we switch our interest from ${\sf Op}^\tau$ to ${\sf Po}^\tau$, given by \eqref{starpor}, since for such general groups $\G$ there is no inversion formula for the Fourier transform at the level of the non-commutative $L^p$-spaces (the Hausdorff-Young inequality cannot be used for our purposes).

\begin{Theorem}\label{tomas}
If $\,\mathfrak a\in \mathscr B^{1,1}(\widehat\Gamma)=L^1\big[\G;\mathscr B^1(\wG)\big]$ then ${\sf Po}^\tau(\mathfrak a)$ is bounded in $L^2(\G)$ and
\begin{equation*}\label{lungenstrass}
\big\Vert{\sf Po}^\tau\!(\mathfrak a)\big\Vert_{\mathbb B[L^2(\G)]}\le\,\p\!\mathfrak a\p_{\mathscr B^{1,1}(\widehat\Gamma)}\,.
\end{equation*}
\end{Theorem}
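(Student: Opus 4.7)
The plan is to define ${\sf Po}^\tau(\mathfrak{a})$ weakly on $L^2(\G)\times L^2(\G)$ by the same sesquilinear form as in \eqref{starpor}, with the pairing reinterpreted through the non-commutative $L^1$--$L^\infty$ duality between $\mathscr{B}^{1,1}(\widehat\Gamma)$ and $\mathscr{B}^{\infty,\infty}(\widehat\Gamma)$ described above, and then produce the operator via Riesz representation. The key analytic input is the $p=\infty$ endpoint of Proposition \ref{bp}, which provides the uniform bound
$$\mathcal{W}^\tau_{u,v}\in\mathscr{B}^{\infty,\infty}(\widehat\Gamma),\qquad \bigl\|\mathcal{W}^\tau_{u,v}\bigr\|_{\mathscr{B}^{\infty,\infty}(\widehat\Gamma)}\le\|u\|_{L^2(\G)}\|v\|_{L^2(\G)}$$
for all $u,v\in L^2(\G)$.

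Given $\mathfrak{a}\in\mathscr{B}^{1,1}(\widehat\Gamma)$, I would introduce the sesquilinear form
$$q_{\mathfrak{a}}(u,v):=\mathcal{T}\bigl(\mathfrak{a}(\mathcal{W}^\tau_{u,v})^*\bigr)=\int_\G\!\int_{\wG}{\rm Tr}_\xi\bigl[\mathfrak{a}(\xi,x)\,\mathcal{W}^\tau_{u,v}(\xi,x)^*\bigr]d\wm(\xi)\,d\m(x),$$
which is well-defined by the duality and coincides with the $\mathscr{B}^2$ inner product of \eqref{starpor} on the intersection $\mathscr{B}^{1,1}\cap\mathscr{B}^{2,2}$, ensuring consistency with the original definition. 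Applying the classical trace-class Hölder inequality $|{\rm Tr}_\xi(AB^*)|\le\|A\|_{\mathbb{B}^1(\H_\xi)}\|B\|_{\mathbb{B}(\H_\xi)}$ fibrewise in $(\xi,x)$, followed by Fubini and the endpoint bound above, yields
$$\bigl|q_\mathfrak{a}(u,v)\bigr|\le\|\mathfrak{a}\|_{\mathscr{B}^{1,1}(\widehat\Gamma)}\bigl\|\mathcal{W}^\tau_{u,v}\bigr\|_{\mathscr{B}^{\infty,\infty}(\widehat\Gamma)}\le\|\mathfrak{a}\|_{\mathscr{B}^{1,1}(\widehat\Gamma)}\|u\|_{L^2(\G)}\|v\|_{L^2(\G)}.$$
The Riesz representation theorem then produces a unique bounded operator ${\sf Po}^\tau(\mathfrak{a})\in\mathbb{B}[L^2(\G)]$ with $\langle{\sf Po}^\tau(\mathfrak{a})u,v\rangle_{L^2(\G)}=q_\mathfrak{a}(u,v)$ and operator norm at most $\|\mathfrak{a}\|_{\mathscr{B}^{1,1}(\widehat\Gamma)}$, which is the desired estimate.

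No real obstacle presents itself beyond bookkeeping: the argument is the non-commutative analogue of the elementary fact that an $L^1$-function pairs with bounded functions by duality. The only subtlety is that the passage from $\mathscr{B}^{2,2}$ to $\mathscr{B}^{1,1}$ is genuinely an extension rather than a restriction (neither space contains the other for non-compact $\G$), so one must verify that the non-commutative duality pairing and the original $\mathscr{B}^2$ inner product agree on $\mathscr{B}^{1,1}\cap\mathscr{B}^{2,2}$. This is immediate since both reduce on the intersection to the same iterated trace integral, by the identification of $\mathscr{B}^{p,p}(\widehat\Gamma)$ with the non-commutative $L^p$-spaces of $(\mathscr{B}^{\infty,\infty}(\widehat\Gamma),\mathcal{T})$ recorded in Subsection \ref{tontolin}.
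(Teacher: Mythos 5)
Your proposal is correct and follows essentially the same route as the paper: the paper likewise defines ${\sf Po}^\tau(\mathfrak a)$ by modifying \eqref{starpor} into a duality pairing $\mathcal T\big(\mathfrak a\,[\mathcal W^\tau_{u,v}]^*\big)$, invoking the $p=\infty$ case of Proposition \ref{bp} and the non-commutative $L^1$--$L^\infty$ duality. Your additional remarks (fibrewise H\"older, Riesz representation, consistency on $\mathscr B^{1,1}\cap\mathscr B^{2,2}$) merely make explicit steps the paper leaves implicit.
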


\begin{proof}
One modifies \eqref{starpor} to a similar definition by duality
\begin{equation*}\label{starport}
\big\<{\sf Po}^\tau\!(\mathfrak a) u,v\big\>_{L^2(\G)}=\big\<\mathfrak a,\mathcal W^\tau_{u,v}\big\>_{(1),(\infty)}:=\mathcal T\Big(\mathfrak a\,\big[\mathcal W^\tau_{u,v}\big]^*\Big)\,,
\end{equation*} 
based on the case $p=\infty$ of Proposition \ref{bp} and on the duality of the non-commutative Lebesgue spaces.
\end{proof}

By using complex interpolation between the end points $p_0=2$ and $p_1=\infty$\,, one gets

\begin{Corollary}\label{thomas}
If $\,p\in[1,2]$\,, $\frac1p+\frac{1}{p'}=1$ and $\,\mathfrak a\in L^p\big[\G;\mathscr B^p(\wG)\big]$\,, then $\,{\sf Po}^\tau\!(\mathfrak a)$ belongs to $\mathbb B^{p'}\big[L^2(\G)\big]$ and
\begin{equation*}\label{lungenstrass}
\big\Vert{\sf Po}^\tau\!(\mathfrak a)\big\Vert_{\mathbb B^{p'}[L^2(\G)]}\le\;\p\!\mathfrak a\!\p_{\mathscr B^{p,p}(\widehat\Gamma)}.
\end{equation*}
\end{Corollary}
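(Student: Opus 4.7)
The plan is to execute the interpolation scheme that the Corollary's statement alludes to, using the two endpoints that are already available in the excerpt and the interpolation property of the non\-commutative $L^p$-scales on both the symbol side and the operator side.

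First I would fix the two endpoints of the interpolation. At $p=1$, Theorem \ref{tomas} supplies the contractive linear map
\begin{equation*}
{\sf Po}^\tau:\mathscr B^{1,1}(\widehat\Gamma)\longrightarrow\mathbb B^{\infty}\!\big[L^2(\G)\big],\qquad \big\p{\sf Po}^\tau\!(\mathfrak a)\big\p_{\mathbb B^{\infty}[L^2(\G)]}\le\,\p\mathfrak a\p_{\mathscr B^{1,1}(\widehat\Gamma)}.
\end{equation*}
At $p=2$, using the definition ${\sf Po}^\tau={\sf Op}^\tau\!\circ\bigl({\fscr F}^{-1}\!\otimes{\fscr F}\bigr)$ from Remark \ref{cicau}, together with the unitarity of ${\sf Op}^\tau:\mathscr B^2(\Gamma)\to\mathbb B^2[L^2(\G)]$ (Theorem \ref{carnat}(3)) and the unitarity of ${\fscr F}^{-1}\!\otimes{\fscr F}:\mathscr B^2(\widehat\Gamma)\to\mathscr B^2(\Gamma)$ from \eqref{furier}, one obtains that
\begin{equation*}
{\sf Po}^\tau:\mathscr B^{2,2}(\widehat\Gamma)\longrightarrow\mathbb B^{2}\!\big[L^2(\G)\big]
\end{equation*}
is an isometric (in fact unitary) isomorphism; in particular it is a contraction.

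Next I would invoke the interpolation identities recalled in Subsection \ref{tontolin}. On the symbol side, the non-commutative measure space $\bigl(\mathscr B^{\infty,\infty}(\widehat\Gamma),\mathcal T\bigr)$ yields
\begin{equation*}
\bigl[\mathscr B^{1,1}(\widehat\Gamma),\mathscr B^{2,2}(\widehat\Gamma)\bigr]_{\th}=\mathscr B^{p,p}(\widehat\Gamma)\quad\text{with}\quad \tfrac{1}{p}=\tfrac{1-\th}{1}+\tfrac{\th}{2},
\end{equation*}
and on the operator side, the non-commutative measure space $\bigl(\mathbb B[L^2(\G)],{\sf Tr}\bigr)$ yields the standard Schatten interpolation
\begin{equation*}
\bigl[\mathbb B^{\infty}[L^2(\G)],\mathbb B^{2}[L^2(\G)]\bigr]_{\th}=\mathbb B^{p'}[L^2(\G)]\quad\text{with}\quad \tfrac{1}{p'}=\tfrac{1-\th}{\infty}+\tfrac{\th}{2}=\tfrac{\th}{2}.
\end{equation*}
A short calculation shows that both choices of $\th\in(0,1)$ coincide and that $1/p+1/p'=1$, so for each $p\in(1,2)$ the same interpolation parameter $\th=2-2/p$ produces the desired spaces on the two sides.

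Then complex interpolation (the Calderón theorem) applied to the linear map ${\sf Po}^\tau$, which is consistently defined on the intersection $\mathscr B^{1,1}(\widehat\Gamma)\cap\mathscr B^{2,2}(\widehat\Gamma)$ (for instance on symbols coming from rank-one operators via \eqref{safir}) and contractive from each endpoint into the corresponding operator endpoint, yields the contraction
\begin{equation*}
{\sf Po}^\tau:\mathscr B^{p,p}(\widehat\Gamma)\longrightarrow\mathbb B^{p'}\!\big[L^2(\G)\big],\qquad \big\p{\sf Po}^\tau\!(\mathfrak a)\big\p_{\mathbb B^{p'}[L^2(\G)]}\le\,\p\mathfrak a\p_{\mathscr B^{p,p}(\widehat\Gamma)},
\end{equation*}
for every $p\in(1,2)$, while the boundary cases $p=1$ and $p=2$ are already known. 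The only delicate point is the verification that ${\sf Po}^\tau$ really is the restriction of a single linear map on a dense common subspace of the two endpoint spaces, so that the Calderón interpolation functor can be applied unambiguously; this follows from the explicit definition \eqref{starpor} (or equivalently from the factorization in Remark \ref{cicau}), which does not depend on the norm used to close the symbol space. I would expect this consistency check to be the only real obstacle; everything else is then a mechanical combination of the two endpoint estimates with the non-commutative $L^p$-interpolation theorem.
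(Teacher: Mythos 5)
Your proposal is correct and follows essentially the same route as the paper: Corollary \ref{thomas} is obtained there precisely by complex interpolation of the non-commutative $L^p$-scales between the endpoint $p=2$ (unitarity of ${\sf Po}^\tau={\sf Op}^\tau\!\circ({\fscr F}^{-1}\!\otimes{\fscr F})$ onto $\mathbb B^2[L^2(\G)]$) and the endpoint $p=1$ given by Theorem \ref{tomas}. Your additional verification of the matching interpolation parameter and of the consistency of ${\sf Po}^\tau$ on a common dense subspace simply spells out details the paper leaves implicit.
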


More refined results follow from real interpolation; the interested reader could write them down easily.

\section{Symmetric quantizations}\label{fourtinin}

Having in mind the well-known \cite{Fo} Weyl quantization, we inquire about the existence of a parameter $\tau$ allowing a symmetric quantization; if it exists, for emphasis, we denote it by $\si$\,. By definition, this means that $a^{\#_\si}=a^\star$ for every $a\in\mathscr B^2(\Gamma)$\,, where of course $a^\star(x,\xi):=a(x,\xi)^*$ (adjoint in $\mathbb B(\H_\xi)$) for every $(x,\xi)\in\Gamma$\,. At the level of pseudo-differential operators the consequence would be the simple relation ${\sf Op}^\si\!(a)^*={\sf Op}^\si\!(a^\star)$\,, so ``real-valued symbols" are sent into self-adjoint operators. 

\subsection{An explicit form for the adjoint}\label{bruqart}

In order to study symmetry it is convenient to give a more explicit form of the involution \eqref{invo}; we need to alow different values of the parameter $\tau$\,. For any measurable map $\tau:\G\rightarrow\G$\,, let us define
\begin{equation}\label{vibrion}
\widetilde\tau:\G\rightarrow\G\,,\quad\widetilde\tau(x):=\tau\big(x^{-1}\big)\,x\,.
\end{equation}

It is worth mentioning that if $\tau(\cdot)={\sf e}$ then $\widetilde\tau={\rm id}_\G$ and if $\,\tau={\rm id}_\G$ then $\,\widetilde\tau(\cdot)={\sf e}$\,.
In addition $\,\widetilde{\widetilde\tau}=\tau$ holds.

\medskip
If $\G=\R^n$ and $\tau:=t\,{\rm id}_{\R^n}$ with $t\in[0,1]$\,, one has $\widetilde\tau=(1-t){\rm id}_{\R^n}$ and the next proposition is well-known for pseudo-differential operators on $\R^n$.

\begin{Proposition}\label{simfunia}
For every $a\in\mathscr B(\Gamma)$ one has ${\sf Op}^{\tau}\!(a)^*={\sf Op}^{\widetilde\tau}\!(a^\star)$\,.
\end{Proposition}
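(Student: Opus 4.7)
The cleanest route is to compare integral kernels. By Proposition \ref{frizt} and Remark \ref{emarc}, ${\sf Op}^\tau\!(a)$ is an integral operator with kernel
$$
{\sf Ker}^\tau_a(x,y)=\big[({\sf id}\otimes{\fscr F}^{-1})a\big]\!\big(x\tau(y^{-1}x)^{-1},y^{-1}x\big).
$$
Since the adjoint of an integral operator corresponds to the kernel involution $M^\bullet(x,y):=\overline{M(y,x)}$, the statement ${\sf Op}^\tau\!(a)^*={\sf Op}^{\widetilde\tau}(a^\star)$ will follow if I can verify the identity
$$
\overline{{\sf Ker}^\tau_a(y,x)}\,=\,{\sf Ker}^{\widetilde\tau}_{a^\star}(x,y)
$$
at the level of symbols $a$ for which the inverse Fourier transform formula \eqref{marian} is licit (e.g.\ $a\in({\sf id}\otimes{\fscr F})\mathcal C_{\rm c}(\G\times\G)$). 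The general case then comes from density, using that both sides depend continuously on $a$ through the $\mathscr B^2$-norms, combined with the unitarity of ${\sf Op}^\tau$ and ${\sf Op}^{\widetilde\tau}$ established in Theorem \ref{carnat}.

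For the kernel identity, I would first note that if $A(x,y):=\big[({\sf id}\otimes{\fscr F}^{-1})a\big](x,y)$, then a direct computation using $\overline{{\rm Tr}_\xi[a(x,\xi)\xi(y)]}={\rm Tr}_\xi[a(x,\xi)^*\xi(y)^*]$ and the cyclicity of the trace gives
$$
\big[({\sf id}\otimes{\fscr F}^{-1})a^\star\big](x,y)\,=\,\overline{A(x,y^{-1})}.
$$
Combined with the formula for ${\sf Ker}^\tau_a$ above, the required identity reduces to the purely group-theoretic assertion
$$
y\,\tau(x^{-1}y)^{-1}\,=\,x\,\widetilde\tau(y^{-1}x)^{-1},
$$
together with the automatic matching of the second argument $x^{-1}y$ on the left with $(y^{-1}x)^{-1}$ on the right.

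The algebraic check is then straightforward: setting $z:=y^{-1}x$, so that $y=xz^{-1}$ and $x^{-1}y=z^{-1}$, the definition \eqref{vibrion} gives $\widetilde\tau(z)=\tau(z^{-1})z$, hence $x\widetilde\tau(z)^{-1}=x z^{-1}\tau(z^{-1})^{-1}$, which is exactly $y\tau(x^{-1}y)^{-1}$. This is the only place where the particular form of $\widetilde\tau$ is used, and indeed one sees it is the unique measurable solution (up to null sets) of the required identity.

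The only real subtlety — which I would expect to be the main obstacle — is that \eqref{siegfrid} is only rigorously valid on a dense subclass of symbols, so some care is needed to pass from the pointwise kernel identity to the equality of bounded operators on $L^2(\G)$. This is handled by noting that $a\mapsto{\sf Op}^\tau\!(a)^*$ and $a\mapsto{\sf Op}^{\widetilde\tau}(a^\star)$ are both antilinear isometries from $\mathscr B^2(\Gamma)$ to $\mathbb B^2\big[L^2(\G)\big]$ (using Theorem \ref{carnat}(3) and the fact that $a\mapsto a^\star$ is an isometry of $\mathscr B^2(\Gamma)$), and they agree on a dense subspace by the kernel computation above; for general $a\in\mathscr B(\Gamma)$ beyond $\mathscr B^2(\Gamma)$, the equality can be interpreted weakly via the sesquilinear forms \eqref{fernandaa}--\eqref{fernandell}.
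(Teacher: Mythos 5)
Your proposal is correct and follows essentially the same route as the paper: both reduce the statement to an identity of Hilbert--Schmidt kernels via $\,{\sf Ker}^\tau={\rm CV}^\tau\circ({\sf id}\otimes{\fscr F}^{-1})$, use the conjugation relation $[({\sf id}\otimes{\fscr F}^{-1})a^\star](x,y)=\overline{[({\sf id}\otimes{\fscr F}^{-1})a](x,y^{-1})}$, and verify exactly the group-theoretic identity $y\,\tau(x^{-1}y)^{-1}=x\,\widetilde\tau(y^{-1}x)^{-1}$ that produces $\widetilde\tau$. The only minor differences are that the paper checks the conjugation relation directly at the $L^2$ level (so no dense-subclass/density step is needed), and it also proves the almost-everywhere uniqueness of $\widetilde\tau$ via a measure-theoretic lemma — a point you assert but do not need for the proposition itself (it is used later for Corollary \ref{simfuna}).
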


\begin{proof}
Hoping that ${\sf Op}^{\tau}\!(a)^*={\sf Op}^{\tau'}\!(a^\star)$ for some $\tau':\G\to\G$\,, by \eqref{zaruz}, one has to examine the equality
\begin{equation*}\label{sdfa}
\Big(\big[{\rm CV}^\tau\!\circ({\sf id}\otimes{\fscr F}^{-1})\big]a\Big)^{\!\bullet}=\big[{\rm CV}^{\tau'}\!\!\circ({\sf id}\otimes{\fscr F}^{-1})\big]a^\star.
\end{equation*}
This and the next identities should hold almost everywhere with respect to the product measure $\m\otimes\m$\,.
Using the easy relation 
\begin{equation*}\label{intersvicia}
[({\sf id}\otimes{\fscr F}^{-1})a^\star](y,z)=\overline{[({\sf id}\otimes{\fscr F}^{-1})a](y,z^{-1})}\,,
\end{equation*}
one gets immediately
\begin{equation}\label{sdga}
\Big(\big[{\rm CV}^{\tau'}\!\!\circ({\sf id}\otimes{\fscr F}^{-1})\big]a^\star\Big)(y,z)=\overline{[({\sf id}\otimes{\fscr F}^{-1})a]\big(y\tau'(z^{-1}y)^{-1},y^{-1}z\big)}\,.
\end{equation}
On the other hand
\begin{equation}\label{sdha}
\begin{aligned}
\Big(\big[{\rm CV}^\tau\!\circ({\sf id}\otimes{\fscr F}^{-1})\big]a\Big)^{\!\bullet}(y,z)&=\overline{\Big(\big[{\rm CV}^\tau\circ({\sf id}\otimes{\fscr F}^{-1})\big]a\Big)(z,y)}\\
&=\overline{\big[({\sf id}\otimes{\fscr F}^{-1})a\big]\big(z\tau(y^{-1}z)^{-1},y^{-1}z\big)}
\end{aligned}
\end{equation}
and the two expressions \eqref{sdga} and \eqref{sdha} always coincide $\m\otimes\m$-almost everywhere if and only if
\begin{equation*}\label{sdja}
y\tau'(z^{-1}y)^{-1}=z\tau(y^{-1}z)^{-1}\,,\quad \m\otimes\m-{\rm a.e}.\;(y,z)\in\G\times\G\,.
\end{equation*}
This condition can be transformed into
\begin{equation*}\label{sdjara}
\tau'(z^{-1}y)=\tau\big(y^{-1}z\big)z^{-1}y\,,\quad \m\otimes\m-{\rm a.e}.\;(y,z)\in\G\times\G\,,
\end{equation*}
which must be shown to be equivalent to $\tau'=\widetilde\tau\,$ holding $\m$-almost everywhere. 

This follows if we prove that $A\subset\G$ is $\m$-negligible if and only if $M(A):=\{(y,z)\in\G\times\G\mid z^{-1}y\in A\}$ is $\m\otimes\m$-negligible. Since $\m$ is $\si$-finite, there is a Borel partition $\G=\sqcup_{n\in\N}B_n$ with $\m(B_n)<\infty$ for every $n\in\N$\,. Thus $M(A)=\sqcup_{n\in\N}M_n(A)$\,, with 
$$
M_n(A):=\{(y,z)\in M(A)\mid z\in B_n\}=\{(y,z)\in \G\times B_n\mid y\in zA\}\,.
$$
Using the invariance of the Haar measure, one checks that $(\m\otimes\m)\big[M_n(A)\big]=\m(A)\m(B_n)$ and the conclusion follows easily.
\end{proof}

\subsection{Symmetry functions}\label{briqart}

The measurable function $\si:\G\to\G$ is called {\it a symmetry function} if one has ${\sf Op}^\si\!(a)^*={\sf Op}^\si\!(a^\star)$ for every $a\in\mathscr B^2(\Gamma)$\,. When $\G$ is admissible and a symmetry function exists we say that {\it the group $\G$ admits a symmetric quantization}. As a consequence of Proposition \ref{simfunia} one gets

\begin{Corollary}\label{simfuna}
The map $\si:\G\rightarrow\G$ is a symmetry function if and only if for almost every $x\in\G$
\begin{equation}\label{zigrot}
\si(x)=\si(x^{-1})\,x\,.
\end{equation}
In particular, if $\si$ is a symmetry function and $a(\cdot,\cdot)\in\mathscr B^2(\Gamma)$ is self-adjoint pointwise (or $\m\otimes\wm$-almost everywhere) then  ${\sf Op}^\si\!(a)$ is a self-adjoint operator in $L^2(\G)$\,.
\end{Corollary}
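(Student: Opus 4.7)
The plan is to read off the corollary directly from Proposition \ref{simfunia}. By that proposition, ${\sf Op}^\si\!(a)^*={\sf Op}^{\widetilde\si}\!(a^\star)$ for every $a\in\mathscr B^2(\Gamma)$\,. So the defining property of a symmetry function, ${\sf Op}^\si\!(a)^*={\sf Op}^\si\!(a^\star)$\,, is equivalent to
\begin{equation*}
{\sf Op}^{\widetilde\si}\!(a^\star)={\sf Op}^\si\!(a^\star)\quad\text{for every }a\in\mathscr B^2(\Gamma)\,.
\end{equation*}
Since the involution $a\mapsto a^\star$ is a bijection of $\mathscr B^2(\Gamma)$\,, this in turn amounts to the operator-level identity ${\sf Op}^{\widetilde\si}={\sf Op}^\si$ on all of $\mathscr B^2(\Gamma)$\,.

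The next step is to extract from ${\sf Op}^{\widetilde\si}={\sf Op}^\si$ the pointwise identification $\si=\widetilde\si$ almost everywhere on $\G$. Here I would use Proposition \ref{frizt}: the kernel factorisation ${\sf Ker}^\tau={\rm CV}^\tau\!\circ({\sf id}\otimes{\fscr F}^{-1})$ together with ${\sf Op}^\tau={\sf Int}\circ{\sf Ker}^\tau$ shows that equality of the two quantizations forces ${\rm CV}^{\widetilde\si}={\rm CV}^\si$ on $L^2(\G\times\G)$\,, hence ${\sf cv}^{\widetilde\si}={\sf cv}^\si$ as maps $\G\times\G\to\G\times\G$ outside an $\m\otimes\m$-null set. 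By the explicit formula \eqref{religie} this reads $x\si(y^{-1}x)^{-1}=x\widetilde\si(y^{-1}x)^{-1}$ for $\m\otimes\m$-a.e.\ $(x,y)$\,, which I would then convert to $\si(z)=\widetilde\si(z)$ for $\m$-a.e.\ $z\in\G$ by the very same Fubini-type argument used at the end of the proof of Proposition \ref{simfunia} --- namely, an $\m$-null set $A$ is detected by the $\m\otimes\m$-nullity of $\{(y,z):z^{-1}y\in A\}$\,. Unwinding the definition \eqref{vibrion} of $\widetilde\si$ gives exactly \eqref{zigrot}\,. The converse is immediate: if $\si=\widetilde\si$ a.e., then ${\sf Op}^\si={\sf Op}^{\widetilde\si}$ and Proposition \ref{simfunia} gives symmetry.

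For the concluding assertion about self-adjointness, once $\si$ is a symmetry function the defining identity ${\sf Op}^\si\!(a)^*={\sf Op}^\si\!(a^\star)$ combined with the assumption $a^\star=a$ (pointwise or $\m\otimes\wm$-a.e., which is the same at the level of $\mathscr B^2(\Gamma)$) yields ${\sf Op}^\si\!(a)^*={\sf Op}^\si\!(a)$ directly, so there is nothing further to verify here.

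The only real obstacle is the measure-theoretic passage from ${\sf Op}^{\widetilde\si}={\sf Op}^\si$ back to $\si=\widetilde\si$ almost everywhere; this is essentially a reprise of the Haar-measure change-of-variables argument already carried out in Proposition \ref{simfunia}, so the proof of the corollary can afford to be very short and simply refer to that computation.
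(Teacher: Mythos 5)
Your proof is correct and takes essentially the paper's route: the corollary is just the specialization of Proposition \ref{simfunia} (whose proof already shows that $\,{\sf Op}^\tau\!(a)^*={\sf Op}^{\tau'}\!(a^\star)$ for all $a$ forces $\tau'=\widetilde\tau$ $\m$-a.e.), and your necessity step — passing from ${\sf Op}^{\widetilde\si}={\sf Op}^\si$ through the unitary factorization ${\sf Op}^\tau={\sf Int}\circ{\rm CV}^\tau\circ({\sf id}\otimes{\fscr F}^{-1})$ to ${\rm cv}^{\widetilde\si}={\rm cv}^\si$ a.e.\ and then to $\widetilde\si=\si$ a.e.\ via the Haar/Fubini null-set argument — is the same computation repackaged. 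The self-adjointness assertion is, as you say, immediate from the definition of a symmetry function.
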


The problem of existence of $\si$ satisfying \eqref{zigrot} seems rather obscure in general, so we only treat some particular cases. 

\begin{Proposition}\label{zponentialaa}
\begin{enumerate}
\item
The product $\,\G:=\prod_{k=1}^m\!\G_k$ of a family of groups admitting a symmetric quantization also admits a symmetric quantization. 
\item
The admissible central extension of a group admitting a symmetric quantization by another group with this property is a group admitting a symmetric quantization.
\item
Any exponential Lie group (in particular any connected simply connected nilpotent group) admits a symmetric quantization.
\end{enumerate}
\end{Proposition}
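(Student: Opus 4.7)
The plan in each part is to exhibit an explicit Borel measurable $\sigma: \G \to \G$ verifying the criterion $\sigma(x) = \sigma(x^{-1})\,x$ from Corollary \ref{simfuna}. Part 1 will be immediate: given symmetry functions $\sigma_k$ on each $\G_k$, I would take the componentwise map $\sigma(x_1, \ldots, x_m) := (\sigma_1(x_1), \ldots, \sigma_m(x_m))$; since inversion and multiplication in a direct product are componentwise, \eqref{zigrot} reduces to the criterion on each factor. Part 3 is nearly as easy: for an exponential Lie group with exponential diffeomorphism $\exp: \g \to \G$, I would set $\sigma(\exp X) := \exp(X/2)$. Then $\sigma(\exp(X)^{-1})\,\exp(X) = \exp(-X/2)\exp(X)$, and since $-X/2$ and $X$ commute in $\g$, every iterated bracket in the Baker--Campbell--Hausdorff series vanishes, giving $\exp(-X/2)\exp(X) = \exp(X/2) = \sigma(\exp X)$.

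Part 2 will require more care. I would start by fixing a normalised Borel section $s: Q \to \G$ (with $s(\e) = \e$) of the projection $\pi: \G \to Q := \G/Z$, whose existence is guaranteed by standard Borel selection in the second countable setting, and I would let $\omega(q_1, q_2) := s(q_1)\,s(q_2)\,s(q_1 q_2)^{-1} \in Z$ be the associated Borel $2$-cocycle. Each $x \in \G$ decomposes uniquely as $x = z(x)\,s(q(x))$ with $q(x) := \pi(x)$ and $z(x) \in Z$; centrality of $Z$ will give $x^{-1} = z(x)^{-1}\,\omega(q(x), q(x)^{-1})^{-1}\,s(q(x)^{-1})$. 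I will then set $\alpha(q) := \omega(\sigma_Q(q)\,q^{-1}, q) \in Z$ and define
\begin{equation*}
\sigma(x) := \sigma_Z\bigl(z(x)\,\alpha(q(x))\bigr)\, s\bigl(\sigma_Q(q(x))\bigr).
\end{equation*}

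To verify \eqref{zigrot} for this $\sigma$, the key preliminary identity is $\alpha(q)\,\alpha(q^{-1}) = \omega(q, q^{-1})$, which I will extract from the cocycle identity applied to the triple $\bigl(\sigma_Q(q)\,q^{-1},\, q,\, q^{-1}\bigr)$ together with the quotient-level symmetry $\sigma_Q(q^{-1})\,q = \sigma_Q(q)$. Then, substituting the formula for $x^{-1}$ into $\sigma(x^{-1})\,x$, I will move all $Z$-factors to the left using centrality, rewrite the central product $s(\sigma_Q(q)\,q^{-1})\,s(q)$ as $\alpha(q)\,s(\sigma_Q(q))$, and finally invoke the defining symmetry of $\sigma_Z$ applied to the $Z$-element $z(x)\,\alpha(q(x))$ to collapse the expression to $\sigma(x)$. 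The main obstacle is organising this Part 2 computation so that the non-multiplicativity of $s$, which introduces the cocycle $\omega$, can be absorbed through the correct choice of $\alpha$; Parts 1 and 3 are essentially tautological once Corollary \ref{simfuna} is in hand.
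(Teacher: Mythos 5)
Your proposal is correct and takes essentially the same route as the paper: part 1 is the same componentwise construction, part 3 is the midpoint of the one-parameter subgroup through $x$ (your $\exp(\tfrac12\log x)$ is the paper's map \eqref{clantaa} in cleaner form), and in part 2 your $\si(x)=\si_Z\big(z(x)\,\alpha(q(x))\big)\,s\big(\si_Q(q(x))\big)$ with $\alpha(q)=\omega\big(\si_Q(q)q^{-1},q\big)=\omega\big(\si_Q(q^{-1}),q\big)$ coincides with the paper's formula \eqref{hunna}, while your key identity $\alpha(q)\alpha(q^{-1})=\omega(q,q^{-1})$ is exactly \eqref{franka}, obtained from the same instance of the $2$-cocycle identity together with the symmetry of $\si_Q$. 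The only difference is presentational: you construct the Borel section and its cocycle directly (which also settles measurability), whereas the paper starts from the cocycle normal form ${\sf H}\times{\sf N}$ of the central extension.
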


\begin{proof}
1. Finite products of admissible groups are admissible. If $\si_k$ is a symmetry function for $\G_k$\,, then $\si\big[(x_k)_k\big]:=\big(\si_k(x_k)\big)_k$ defines a symmetry function for $\G$\,.

\medskip
2. The structure of central group extensions can be described in terms of $2$-cocycles up to canonical isomorphisms. Let ${\sf N}$ be an Abelian locally compact group, ${\sf H}$ a locally compact group and $\varpi:{\sf H}\times{\sf H}\rightarrow{\sf N}$ a $2$-cocycle. On $\G:={\sf H}\times{\sf N}$ one has the composition law and the inversion
\begin{equation}\label{monggola}
(h_1,n_1)(h_2,n_2):=(h_1 h_2,n_1 n_2\varpi(h_1,h_2))\,,\quad(h,n)^{-1}:=\big(h^{-1},\varpi(h^{-1},h)^{-1}n^{-1}\big)\,.
\end{equation}
The properties of $\varpi$ are normalisation $\varpi(h,\e_{\sf H})=\e_{\sf N}=\varpi(\e_{\sf H},h)\,,\ \forall\,h\in{\sf H}$\,,
and the $2$-cocycle identity
\begin{equation}\label{aleanna}
\varpi(h_1,h_2)\varpi(h_1h_2,h_3)=\varpi(h_2,h_3)\varpi(h_1,h_2h_3)\,,\quad\forall\,h_1,h_2,h_3\in{\sf H}\,.
\end{equation}
We are given symmetry functions $\si_{\sf H}:{\sf H}\rightarrow{\sf H}$ and $\si_{\sf N}:{\sf N}\rightarrow{\sf N}$ thus satisfying
\begin{equation*}\label{tatara}
\si_{\sf H}(h)=\si_{\sf H}(h^{-1})\,h\,,\quad \si_{\sf N}(n)=\si_{\sf N}(n^{-1})\,n=n\,\si_{\sf N}(n^{-1})\,.
\end{equation*}
We define the measurable map $\,\si:\G\rightarrow\G$ by
\begin{equation}\label{hunna}
\begin{aligned}
\si(h,n):&=\Big(\si_{\sf H}(h),\si_{\sf N}\big[\varpi\big(\si_{\sf H}(h^{-1}),h\big)n\big]\Big)\\
&=\Big(\si_{\sf H}(h^{-1})\,h,\varpi\big(\si_{\sf H}(h^{-1}),h\big)n\,\si_{\sf N}\big[\varpi\big(\si_{\sf H}(h^{-1}),h\big)^{-1}n^{-1}\big]\Big)\,;
\end{aligned}
\end{equation}
the second line has been deduced from the first by using the properties of $\si_{\sf H}$ and $\si_{\sf N}$ and the fact that ${\sf N}$ is commutative.
We compute using \eqref{monggola} and \eqref{hunna}
\begin{equation}\label{vichingga}
\begin{aligned}
&\si\big((h,n)^{-1}\big)\,(h,n)=\si\big(h^{-1},\varpi(h^{-1},h)^{-1}n^{-1}\big)\,(h,n)\\
&=\Big(\si_{\sf H}(h^{-1}),\si_{\sf N}\big[\varpi\big(\si_{\sf H}(h),h^{-1}\big)\varpi(h^{-1},h)^{-1}n^{-1}\big]\Big)\,(h,n)\\
&=\Big(\si_{\sf H}(h^{-1})\,h,n\,\si_{\sf N}\big[\varpi\big(\si_{\sf H}(h),h^{-1}\big)\varpi(h^{-1},h)^{-1}n^{-1}\big]\varpi\big(\si_{\sf H}(h^{-1}),h\big)\Big)\,.
\end{aligned}
\end{equation}
The first components in \eqref{hunna} and \eqref{vichingga} are equal. The second ones would coincide if one shows
\begin{equation*}\label{franca}
\varpi\big(\si_{\sf H}(h^{-1}),h\big)^{-1}=\varpi\big(\si_{\sf H}(h),h^{-1}\big)\varpi(h^{-1},h)^{-1}\,,
\end{equation*}
which is equivalent to
\begin{equation}\label{franka}
\varpi\big(\si_{\sf H}(h),h^{-1}\big)\varpi\big(\si_{\sf H}(h^{-1}),h\big)=\varpi\big(h^{-1},h\big)\,.
\end{equation}
Taking in \eqref{aleanna} $\,h_3=h_2^{-1}=h$ and using the normalization of $\varpi$ one gets
\begin{equation*}\label{tatarara}
\varpi(h_1,h^{-1})\varpi(h_1 h^{-1}\!,h)=\varpi(h^{-1}\!,h)\,,\quad\forall\,h_3\in{\sf H}\,.
\end{equation*}
Choosing $h_3:=\si_{\sf H}(h)$ we get \eqref{franka}, because $\si_{\sf H}$ is a symmetry function.

\medskip
3. Assume that $\G$ is a Lie group with Lie algebra $\mathfrak g$\,. It is known that the exponential map $\exp:\mathfrak g\rightarrow\G$ restricts to a diffeomorphism $\exp:\mathfrak u\rightarrow{\sf U}$\,, where $\mathfrak u$ is a neighborhood of $0\in\mathfrak g$ (a ball centered in the origin, for example) and ${\sf U}$ is a neighborhood of $\e\in\G$\,. The inverse diffeomorphism is denoted by $\log:{\sf U}\rightarrow\mathfrak u$\,. 
One defines ``the midpoint mapping"
\begin{equation}\label{clantaa}
\si:{\sf U}\rightarrow\G\,,\quad\si(x):=\int_0^1\!\exp[s\log x]ds\,.
\end{equation}
We claim that
\begin{equation}\label{dutaa}
x\,\si(x^{-1})=\si(x)=\si(x^{-1})\,x\,,\quad\forall\,x\in\G\,.
\end{equation}
We prove the second equality; the first one is similar:
$$
\begin{aligned}
\si(x^{-1})\,x&=\int_0^1\!\!\exp\!\big[s\log(x^{-1})\big]ds\,x\\
&=\int_0^1\!\!\exp\!\big[-s\log(x)\big]ds\,\exp\!\big[\log(x)\big]\\
&=\int_0^1\!\!\exp\!\big[(1-s)\log(x)\big]ds\\
&=\int_0^1\!\!\exp\!\big[s\log(x)\big]ds=\si(x)\,.
\end{aligned}
$$
An exponential group $\G$ is, by definition, a Lie group for which one can take $\mathfrak u=\mathfrak g$ and ${\sf U}=\G$ (i.e. the exponential map is a global diffeomorphism), case in which the symmetry function $\si$ is globally defined. In addition $\G$ is admissible. Connected simply connected nilpotent group are exponential, by \cite[Th. 1.2.1]{CG}. 
\end{proof}

\begin{Example}\label{iarsaa}
{\rm For $\G=\R^n$ one just sets $\si(x):=x/2$ (getting finally the Weyl quantization). 

If $\si$ is required to satisfy $\si(\cdot^{-1})=\si(\cdot)^{-1}$ (e.g. being an endomorphism), 
\eqref{zigrot} reduces to $x=\si(x)^2$ for almost every $x$\,, which is equivalent to ``any" element in $\G$ having a square root; such a group does admit a symmetric quantization. This fails for many groups, as $\G:=\mathbb Z^n$ for instance. 

It is easy to see that the torus $\mathbb T$ has a measurable symmetry function given by the group endomorphism defined essentially as $\si(e^{2\pi it}):=e^{\pi it}$. It is not even continuous and this is an obvious drawback on such a basic Lie group, on which one has great expectations from a pseudo-differential calculus (cf. \cite{RT,RT2} and references therein for the rich theory in the case $\tau(x)=1$).

On the other hand, the symmetry function of an exponential Lie group is smooth.
}
\end{Example}

\section{Extension to distributions}\label{fergulin}

Having started with the formalism involving symbols in $\mathscr B^2(\Gamma)$ and operators bounded on $L^2(\G)$, 
it is useful to be able to extend it to e.g. unbounded symbols and to operators which are only densely defined on $L^2(\G)$.
If the group $\G$ is a Lie group (has a smooth manifold structure), we can, for example, think of operators
acting from the space of test functions to the space of distributions, or of operators having e.g. polynomial growth of symbols. 
Without assuming that $\G$ is a Lie group we do not have the usual space of smooth compactly supported functions
readily available as the standard space of test functions. So we will be using its generalisation to
locally compact groups by Bruhat \cite{Br}, and these Bruhat spaces $\mathcal D(\G)$ and $\mathcal D'(\G)$ will
replace the usual spaces of test functions and distributions, respectively, in our setting.

\subsection{Smooth functions and distributions -- Bruhat spaces }\label{broqart}

The Bruhat spaces $\mathcal D(\G)$ and $\mathcal D'(\G)$ have been introduced in \cite{Br}, to which we refer for further details. Most of their properties hold for every locally compact group, but in some cases second countability is also used. Postliminarity, unimodularity or amenability are not needed.

\medskip
{\it A good subgroup of $\,\G$} is a compact normal subgroup ${\sf H}$ of $\G$ such that $\G/{\sf H}$ is (isomorphic to) a Lie group. The family of all good subgroups of $\G$ will be denoted by ${\sf good}(\G)$\,; it is stable under intersections. We are going to assume first that 
\begin{equation}\label{good}
\bigcap_{{\sf H}\in{\sf good}(\G)}\!\!\!{\sf H}=\{\e\}\,.
\end{equation} 
Denoting the connected component of the identity by $\G_0$\,, this happens, for instance, if $\G/\G_0$ is compact, in particular if $\G$ is connected. If \eqref{good} holds, every neighborhood of $\e$ contains an element of ${\sf good}(\G)$\,. Then $\G$ can be seen as the projective limit of the projective family of Lie groups 
\begin{equation*}\label{project}
\big\{\G/{\sf H}\to\G/{\sf K}\mid {\sf H},{\sf K}\in{\sf good}(\G)\,, {\sf K}\subset{\sf H}\big\}\,.
\end{equation*}
For every good group ${\sf H}$ one sets $\mathcal D(\G/{\sf H}):=\mathcal C^\infty_{\rm c}(\G/{\sf H})$ with the usual inductive limit topology. Functions on quotients are identified with invariant functions on the group by the map $j_{\sf H}(v):=v\circ q_{\sf H}$\,, where $q_{\sf H}:\G\rightarrow\G/{\sf H}$ is the  canonical surjection. Thus on 
\begin{equation*}\label{bruhat}
\mathcal D_{\sf H}(\G):=j_{\sf H}[\mathcal D(\G/{\sf H})]\subset \mathcal C_c(\G)\subset\mathcal C_0(\G)
\end{equation*}
one can transport the topology of $\mathcal D(\G/{\sf H})$\,. 

\begin{Definition}\label{bruholt}
The Bruhat space $\mathcal D(\G)$ of the locally compact group $\G$ is the topological inductive limit of the family of subspaces $\{\mathcal D_{\sf H}(\G)\mid {\sf H}\in{\sf good}(\G)\}$ of $\,\mathcal C_c(\G)$\,.

The strong dual of $\mathcal D(\G)$ is denoted by $\mathcal D'(\G)$\,; it contains $\mathcal D(\G)$ densely. Its elements are called {\it distributions}.
\end{Definition} 

The space $\mathcal D(\G)$ is barrelled and bornological. It is continuously and densely contained in $\mathcal C_c(\G)$ and complete. If $\G$ is already a Lie group, then $\{\e\}\in{\sf good}(\G)$ and clearly $\mathcal D(\G)=\mathcal C_{\rm c}^\infty(\G)$\,.

The spaces $\mathcal D(\G)$ and $\mathcal D'(\G)$ are complete and Montel (thus reflexive) as well as nuclear.

\medskip
We presented above the case in which our locally compact group satisfies $\,\bigcap_{{\sf H}\in{\sf good}(\G)}{\sf H}=\{\e\}$\,. Following \cite[Sect. 2]{Br}, we briefly indicate what to do without this assumption.

There exists an open subgroup $\G_1$ of $\G$ such that $\,\bigcap_{{\sf H_1}\in{\sf good}(\G_1)}{\sf H_1}=\{\e\}$\,. Thus the space $\mathcal D(\G_1)\subset \mathcal C_{\rm c}(\G)$ is available. The group $\G$ is partitioned in classes modulo $\G_1$ to the left: $\G=\bigcup x\G_1$ or to the right: $\G=\bigcup\G_1 x$\,.
By left translations one generates the subspaces $\mathcal D(x\G_1)$ of $\mathcal C_{\rm c}(\G)$ (with the transported topology); the elements are particular types of continuous functions on $\G$ compactly supported in $x\G_1$\,. Then define $\mathcal D^L(\G)\subset \mathcal C_{\rm c}(\G)$ to be the topological direct sum $\mathcal D^L(\G):=\bigoplus\mathcal D(x\G_1)$\,. It comes out that the similarly constructed $\mathcal D^R(\G):=\bigoplus\mathcal D(\G_1 x)$ is the same subspace of $\mathcal C_{\rm c}(\G)$ with the same topology. In addition, it does not depend on the choice of the open subgroup $\G_1$ so it deserves the notation $\mathcal D(\G)$\,. 

Then the construction of the space $\mathcal D'(\G)$ follows similarly and all the nice properties mentioned above still hold (cf. \cite{Br}). The main reason is the fact that topological direct sums are rather easy to control. Subsequently we will have recourse to these Bruhat spaces in the general case.

\medskip
We are going to use the symbol $\overline{\otimes}$ for the projective tensor product of locally compact spaces; note, however, that this will only be applied to spaces known to be nuclear. By the Kernel Theorem for Bruhat spaces \cite[Sect. 5]{Br} one has 
\begin{equation*}\label{inseninare}
\mathcal D(\G\times\G)\cong\mathcal D(\G)\,\overline\otimes\,\mathcal D(\G)\subset L^2(\G\times\G)
\end{equation*}
continuously and densely. Soon we are going to need the next result:

\begin{Lemma}\label{cadrat}
The mapping 
\begin{equation}\label{cadrilat}
{\rm CV}:\mathcal D(\G\times\G)\rightarrow\mathcal D(\G\times\G)\,,\quad [{\rm CV}(\Psi)](x,y):=\Psi[{\rm cv}(x,y)]=\Psi\big(x,y^{-1}x\big)
\end{equation}
is a well-defined topological isomorphism. Its inverse ${\rm CV}^{-1}$ is the operation of composing with
\begin{equation*}\label{inbersa}
{\rm cv}^{-1}:\G\times\G\rightarrow\G\times\G\,,\quad {\rm cv}^{-1}(x,y):=\big(x,xy^{-1}\big)\,.
\end{equation*}
By transposing the inverse one gets a topological isomorphism 
\begin{equation*}\label{transinv}
{\rm CV}:=\big[{\rm CV}^{-1}\big]^{\sf tr}:\mathcal D'(\G\times\G)\rightarrow\mathcal D'(\G\times\G)\,,
\end{equation*}
which is an extension of the one given in \eqref{cadrilat} (this explains the notational abuse).
\end{Lemma}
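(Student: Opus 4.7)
The plan is to reduce to the Lie group case via the inductive limit structure of the Bruhat space and then pass to distributions by duality. I will sketch the case in which condition \eqref{good} holds; the general case is obtained by repeating the argument on each summand of the direct sum decomposition relative to an open subgroup $\G_1$, using that the homeomorphism ${\rm cv}$ sends the compact support of any summand into a set meeting only finitely many $\G_1$-cosets.

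A direct computation shows that ${\rm cv}$ and ${\rm cv}^{-1}$ are mutually inverse homeomorphisms of $\G\times\G$, so composition with either is at least an algebraic self-bijection of $\mathcal C(\G\times\G)$ preserving compact support. Since good subgroups of $\G\times\G$ of the form ${\sf H}\times{\sf H}$ with ${\sf H}\in{\sf good}(\G)$ form a cofinal family (any good ${\sf K}\subset\G\times\G$ is contained in $\pi_1({\sf K})\times\pi_2({\sf K})$, both factors being good in $\G$), one has $\mathcal D(\G\times\G)$ as the inductive limit of the subspaces $\mathcal D_{{\sf H}\times{\sf H}}(\G\times\G)\cong\mathcal C_c^\infty\big((\G/{\sf H})^2\big)$. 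The crux is to show that ${\rm cv}$ descends to a smooth self-diffeomorphism of the Lie group $(\G/{\sf H})^2$ for every ${\sf H}\in{\sf good}(\G)$. Normality of ${\sf H}$ in the \emph{full} group $\G$ is essential here: for any $h_1,h_2\in{\sf H}$,
\begin{equation*}
h_2^{-1}y^{-1}xh_1 \;=\; y^{-1}x\,\bigl(x^{-1}(yh_2^{-1}y^{-1})x\bigr)h_1 \;\in\; y^{-1}x\,{\sf H},
\end{equation*}
so that ${\rm cv}(xh_1,yh_2)\equiv{\rm cv}(x,y)\pmod{{\sf H}\times{\sf H}}$. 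The induced map on $(\G/{\sf H})^2$ has the same formula $(\bar x,\bar y)\mapsto(\bar x,\bar y^{-1}\bar x)$ and is visibly a smooth diffeomorphism, with the analogous descent of ${\rm cv}^{-1}$ as inverse. Pull-back by such a diffeomorphism is a topological automorphism of $\mathcal C_c^\infty$, hence ${\rm CV}$ restricts to a topological automorphism of each $\mathcal D_{{\sf H}\times{\sf H}}(\G\times\G)$; passing to the inductive limit gives the desired topological isomorphism of $\mathcal D(\G\times\G)$, with inverse given by composition with ${\rm cv}^{-1}$.

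For the distribution statement, the transpose $[{\rm CV}^{-1}]^{\sf tr}$ is automatically a topological isomorphism of $\mathcal D'(\G\times\G)$ by general duality. To verify that it extends ${\rm CV}$ under the inclusion $\mathcal D\hookrightarrow\mathcal D'$ (via integration against $\m\otimes\m$), fix $\Psi_1,\Psi_2\in\mathcal D(\G\times\G)$ and compute
\begin{equation*}
\bigl\<[{\rm CV}^{-1}]^{\sf tr}\Psi_1,\Psi_2\bigr\>=\bigl\<\Psi_1,{\rm CV}^{-1}\Psi_2\bigr\>=\int_{\G\times\G}\Psi_1(x,y)\,\Psi_2(x,xy^{-1})\,d\m(x)d\m(y).
\end{equation*}
The substitution $u=xy^{-1}$ at fixed $x$, for which $d\m(y)=d\m(u)$ by unimodularity (and invariance of Haar measure under inversion), converts this into $\int\Psi_1(x,u^{-1}x)\Psi_2(x,u)\,d\m(x)d\m(u)=\langle{\rm CV}(\Psi_1),\Psi_2\rangle$, proving compatibility.

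The main technical obstacle is the descent step, where full normality of ${\sf H}$ in $\G$ is used. In the general case where \eqref{good} fails one is forced to work with good subgroups of the open subgroup $\G_1$, which may fail to be normal in $\G$; there one must refine the argument by combining the direct-sum decomposition of $\mathcal D(\G\times\G)$ with the fact that the continuous image of a compact set meets only finitely many cosets, and verify coset-by-coset smoothness. This is bookkeeping rather than a genuinely new idea, but it is the only point at which the clean inductive-limit picture does not apply verbatim.
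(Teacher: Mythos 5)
Your overall strategy is the same as the paper's: pass to the quotient Lie groups $(\G/{\sf H})\times(\G/{\sf H})$ along good subgroups of the form ${\sf H}\times{\sf H}$, check that ${\rm cv}$ descends there to a diffeomorphism (your normality computation for this descent is correct and is exactly the point the paper also isolates), conclude on each $\mathcal D_{{\sf H}\times{\sf H}}(\G\times\G)$, pass to the inductive limit, and obtain the distributional statement by transposition; your verification that $[{\rm CV}^{-1}]^{\sf tr}$ extends ${\rm CV}$ via the substitution $u=xy^{-1}$ and unimodularity is also fine.

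There is, however, one step whose justification as written does not work: the cofinality claim. In the inductive limit defining $\mathcal D(\G\times\G)$ the spaces grow as the good subgroups \emph{shrink} (if ${\sf K}'\subset{\sf K}$ then $\mathcal D_{\sf K}\subset\mathcal D_{{\sf K}'}$, since ${\sf K}$-invariance is the stronger condition). Hence to know that the subfamily $\{{\sf H}\times{\sf H}\}$ exhausts $\mathcal D(\G\times\G)$ you need: every good subgroup ${\sf K}$ of $\G\times\G$ \emph{contains} some ${\sf H}\times{\sf H}$ with ${\sf H}\in{\sf good}(\G)$. Your parenthetical argument gives the opposite containment ${\sf K}\subset\pi_1({\sf K})\times\pi_2({\sf K})$, which only yields $\mathcal D_{\pi_1({\sf K})\times\pi_2({\sf K})}\subset\mathcal D_{\sf K}$ and therefore does not show that a ${\sf K}$-invariant test function lies in some $\mathcal D_{{\sf H}\times{\sf H}}$; as it stands your argument only proves that ${\rm CV}$ is an automorphism of the (a priori possibly smaller) subspace $\varinjlim\mathcal D_{{\sf H}\times{\sf H}}$. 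The gap is fixable: under \eqref{good} every neighbourhood of $\e$ contains a good subgroup, so every neighbourhood of $(\e,\e)$ contains some ${\sf H}\times{\sf H}$; since $(\G\times\G)/{\sf K}$ is a Lie group it has no small subgroups, so any subgroup of $\G\times\G$ contained in a suitable ${\sf K}$-saturated neighbourhood of the identity must lie in ${\sf K}$, giving ${\sf H}\times{\sf H}\subset{\sf K}$ as required. Alternatively one can sidestep cofinality altogether, as the paper implicitly does, by invoking the Kernel Theorem $\mathcal D(\G\times\G)\cong\mathcal D(\G)\,\overline\otimes\,\mathcal D(\G)$ together with the stability of ${\sf good}(\G)$ under intersections, which places (sums of) elementary tensors in some $\mathcal D_{{\sf H}\times{\sf H}}$ directly. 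Finally, your treatment of the case where \eqref{good} fails is only a sketch (the coset-by-coset verification over the open subgroup $\G_1$ genuinely requires checking that the relevant invariances survive the translations, since good subgroups of $\G_1$ need not be normal in $\G$), but the paper leaves exactly this bookkeeping to the reader as well, so I do not count it as a defect beyond noting that it still has to be done.
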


\begin{proof}
The proof is quite straightforward, but rather long if all the details are included, so it is essentially left to the reader. Besides using the definitions and the standard tools of duality, one should also note the following:
\begin{itemize}
\item
If ${\sf H}$ is a good subgroup of $\G$\,, then ${\sf H}\times{\sf H}$ is a good subgroup of $\G\times\G$ and $(\G\times\G)/({\sf H}\times{\sf H})$ is canonically isomorphic to $(\G/{\sf H})\times(\G/{\sf H})$\,.
\item
For ${\sf H}\in{\sf good}(\G)$ there is {\it a Lie group isomorphism}
\begin{equation*}\label{aku}
{\rm cv}_{\sf H}:(\G/{\sf H})\times(\G/{\sf H})\rightarrow(\G/{\sf H})\times(\G/{\sf H})\,,
\end{equation*}
\begin{equation*}\label{aka}
{\rm cv}_{\sf H}(x{\sf H},y{\sf H}):=\big((x{\sf H}),(y{\sf H})^{-1}x{\sf H}\big)=\big(x{\sf H},y^{-1}x{\sf H}\big)
\end{equation*}
and related to the initial change of variables ${\rm cv}$ through
\begin{equation*}
{\rm cv}_{\sf H}\circ\big(q_{\sf H}\times q_{\sf H}\big)=\big(q_{\sf H}\times q_{\sf H}\big)\circ{\rm cv}\,.
\end{equation*}
This and the fact that ${\rm cv}_{\sf H}$ is a proper map easily allow us to conclude that $\,{\rm CV}:\mathcal D_{{\sf H}\times{\sf H}}(\G\times\G)\rightarrow\mathcal D_{{\sf H}\times{\sf H}}(\G\times\G)$ is a well-defined isomorphism for every good subgroup ${\sf H}$\,.
\item
Let $\G_1$ be a subgroup of $\G$\,; then ${\rm cv}$ carries $\G_1\times\G_1$ into itself isomorphically.
\item
Let $\G_1$ be an open subgroup of $\G$ such that $\,\bigcap_{{\sf H_1}\in{\sf good}(\G_1)}{\sf H_1}=\{\e\}$\,. Then the family $\big\{{\sf H}_1\times{\sf H}_1\mid {\sf H}_1\in{\sf good}(\G_1)\big\}$ is directed under inclusion and
\begin{equation*}\label{aqui}
\bigcap_{{\sf H_1}\in{\sf good}(\G_1)}\!\!{\sf H_1}\times{\sf H}_1=\{(\e,\e)\}\,.
\end{equation*}
\end{itemize}
\end{proof}

\begin{Remark}\label{ager}
{\rm Of course, the case $\tau(x)=x$ can be treated the same way. If one tries to do the same for the change of variables ${\rm cv}^\tau$, in general one encounters rather complicated conditions relating the map $\tau$ to the family ${\sf good}(\G)$\,. However, if $\G$ is a Lie group, ${\sf good}(\G)$ has a smallest element $\{{\sf e}\}$ and thus $\mathcal D(\G)$ coincides with $\mathcal C_{\rm c}^\infty(\G)$\,. Then it is easy to see that the statements of the lemma hold if ${\rm cv}^\tau$ is proper and $\,\tau:\G\rightarrow\G$ is a $\mathcal C^\infty$-function.
}
\end{Remark}

\subsection{Restrictions and extensions of the pseudo-differential calculus}\label{ferfelin}

Let us define $\mathscr D(\wG):={\fscr F}[\mathcal D(\G)]$ with the locally convex topological structure transported from the Bruhat space $\mathcal D(\G)$\,. One has $\mathcal D(\G)\subset\mathcal C_{\rm c}(\G)\subset L^2(\G)\cap A(\G)$ (continuously and densely), so $\mathscr D(\wG)$ is a dense subspace of $\mathscr B^2(\wG)\cap\mathscr B^1(\wG)$ (with the intersection topology) and of $\mathscr B^2(\wG)$\,. Thus the explicit form of the inverse \eqref{marian} holds on $\mathscr D(\wG)$\,. One also has 
\begin{equation*}\label{megazork-e}
u(\e)=\int_{\widehat{\G}}{\rm Tr}_\xi[({\fscr F}u)(\xi)]d\wm(\xi)\,,\quad\forall\,u\in\mathcal D(\G)\,.
\end{equation*}

We are going to use the dense subspace
\begin{equation*}\label{inserare}
\mathscr D\big(\Gamma\big)\equiv\mathscr D\big(\G\times\wG\big):=\mathcal D(\G)\,\overline\otimes\,\mathscr D(\wG)\subset\mathscr B^2(\Gamma)\,,
\end{equation*}
possessing its own locally convex topology, obtained by transport of structure and the completed projective tensor product construction.
Taking also into account the strong dual, one gets a Gelfand triple $\mathscr D\big(\Gamma\big)\hookrightarrow\mathscr B^2(\Gamma)\hookrightarrow\mathscr D'\big(\Gamma\big)$\,.

\begin{Proposition}\label{quadrat}
The pseudo-differential calculus $\,{\sf Op}:L^2(\G)\otimes\mathscr B^2(\wG)\rightarrow\mathbb B^2\big[L^2(\G)\big]$
\begin{itemize}
\item
restricts to a topological isomorphism $\,{\sf Op}:\mathcal D(\G)\,\overline\otimes\,\mathscr D(\wG)\rightarrow\mathbb B\big[\mathcal D'(\G);\mathcal D(\G)\big]$\,,
\item
extends to a topological isomorphism $\,{\sf Op}:\mathcal D'(\G)\,\overline\otimes\,\mathscr D'(\wG)\rightarrow\mathbb B\big[\mathcal D(\G);\mathcal D'(\G)\big]$\,.
\end{itemize}
\end{Proposition}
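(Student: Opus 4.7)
The plan is to use the factorisation \eqref{zaruz}, namely
\[
{\sf Op} = {\sf Int} \circ {\rm CV} \circ ({\sf id} \otimes {\fscr F}^{-1}),
\]
and to verify that each of the three factors is a topological isomorphism between the appropriate Bruhat-type spaces, both at the ``regularising'' and at the distributional level. The statement then follows by composition, while compatibility with the Hilbertian map already established in Theorem \ref{carnat} ensures that we are genuinely restricting and extending ${\sf Op}$.

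For the Fourier factor, the topology on $\mathscr D(\wG)$ is by construction transported from $\mathcal D(\G)$ through ${\fscr F}$, so ${\fscr F}^{-1}:\mathscr D(\wG)\to\mathcal D(\G)$ is a topological isomorphism. Since $\mathcal D(\G)$ is nuclear, tensoring with the identity yields a topological isomorphism
\[
{\sf id}\otimes{\fscr F}^{-1}:\mathcal D(\G)\,\overline\otimes\,\mathscr D(\wG)\longrightarrow\mathcal D(\G)\,\overline\otimes\,\mathcal D(\G)\cong\mathcal D(\G\times\G),
\]
the last identification being the Kernel Theorem for Bruhat spaces quoted before Lemma \ref{cadrat}. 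Reflexivity of $\mathcal D(\G)$ and $\mathscr D(\wG)$ then allows one to transpose and obtain the corresponding isomorphism on the distributional side.

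The change of variables factor is supplied directly by Lemma \ref{cadrat}, which gives the required topological isomorphism of ${\rm CV}$ on $\mathcal D(\G\times\G)$ and, by transposition of its inverse, on $\mathcal D'(\G\times\G)$.

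The main obstacle is the integration factor. What is needed are two canonical topological isomorphisms
\[
\mathcal D(\G\times\G)\;\cong\;\mathbb B[\mathcal D'(\G);\mathcal D(\G)],\qquad\mathcal D'(\G\times\G)\;\cong\;\mathbb B[\mathcal D(\G);\mathcal D'(\G)],
\]
both realised by $M\mapsto{\sf Int}(M)$. The second is the standard Schwartz kernel theorem, valid for any nuclear reflexive pair and already implicit in the paper's treatment of $\mathcal D(\G\times\G)\cong\mathcal D(\G)\,\overline\otimes\,\mathcal D(\G)$. The first, ``smoothing'', version is the delicate point: one must show that every continuous linear map $\mathcal D'(\G)\to\mathcal D(\G)$ comes from a kernel in $\mathcal D(\G\times\G)$. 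In the Bruhat framework this is handled by exhausting $\G$ by good quotients $\G/{\sf H}$ (and, in the non-connected case, by translates of an open subgroup $\G_1$ satisfying \eqref{good}), reducing the assertion to the classical Schwartz kernel theorem on the Lie groups $\G/{\sf H}$, and matching the inductive-limit structures on both sides via the good subgroups ${\sf H}\times{\sf H}\in{\sf good}(\G\times\G)$. Once this is in place, composing the three factor-wise isomorphisms delivers both assertions of the proposition.
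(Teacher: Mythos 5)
Your proposal is correct and follows essentially the same route as the paper: the paper's proof consists precisely of the two commutative diagrams encoding the factorisation ${\sf Op}={\sf Int}\circ{\rm CV}\circ({\sf id}\,\overline\otimes\,{\fscr F}^{-1})$, with Lemma \ref{cadrat} supplying the ${\rm CV}$ step and the remaining details (the Fourier factor and the kernel-theorem identifications for ${\sf Int}$, which you fill in via nuclearity and the Bruhat kernel theorem) left to the reader. Your explicit treatment of the ``smoothing'' identification $\mathcal D(\G\times\G)\cong\mathbb B[\mathcal D'(\G);\mathcal D(\G)]$ by reduction to good Lie quotients is a legitimate way to supply a detail the paper omits, and it could equally be obtained from the abstract kernel theorem for nuclear reflexive spaces.
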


\begin{proof}
The proof can essentially be read in the diagrams
\begin{equation*}\label{patrat}
\begin{diagram}
\node{\mathcal D(\G)\,\overline\otimes\, \mathscr D(\wG)} \arrow{e,t}{\!{{\sf id}\overline\otimes\fscr F}^{-1}} \arrow{s,l}{{\sf Op}}\node{\mathcal D(\G)\,\overline\otimes\,\mathcal D(\G)\cong\mathcal D(\G\times\G)\quad\quad\quad\quad\quad\quad\quad}\arrow{s,r}{{\rm CV}}\\ 
\node{\mathbb B\big[\mathcal D'(\G);\mathcal D(\G)\big]} \node{\mathcal D(\G\times\G)} \arrow{w,b}{{\sf Int}}
\end{diagram}
\end{equation*}
and
\begin{equation*}\label{potrat}
\begin{diagram}
\node{\mathcal D'(\G)\,\overline\otimes\, \mathscr D'(\wG)} \arrow{e,t}{\!{{\sf id}\overline\otimes\fscr F}^{-1}} \arrow{s,l}{{\sf Op}}\node{\mathcal D'(\G)\,\overline\otimes\,\mathcal D'(\G)\cong\mathcal D'(\G\times\G)\quad\quad\quad\quad\quad\quad\quad}\arrow{s,r}{{\rm CV}}\\ 
\node{\mathbb B\big[\mathcal D(\G);\mathcal D'(\G)\big]} \node{\mathcal D'(\G\times\G)} \arrow{w,b}{{\sf Int}}
\end{diagram}
\end{equation*}
The vertical arrows to the right are justified by Lemma \ref{cadrat}. We leave the details to the reader.
\end{proof}

Techniques from \cite{MP} could be applied to define and study large Moyal algebras of vector-valued symbols corresponding to the spaces $\mathbb B\big[\mathcal D(\G)\big]$ and $\mathbb B\big[\mathcal D'(\G)\big]$ of operators.

\subsection{Compactness criteria}\label{fifitan}

The next result shows that compactness of sets, operators and families of operators in the Hilbert space $L^2(\G)$ can be characterised by localisation with pseudo-differential operators with symbols in $\mathscr D(\Gamma)$\,. 
We adapt the methods of proof from \cite{MPa}, whose framework cannot be applied directly.

\begin{Theorem}\label{alaman}
\begin{enumerate}
\item
A bounded subset $\Delta$ of $L^2(\G)$ is relatively compact if and only if for every $\epsilon>0$ there exists $a\in\mathscr D(\Gamma)$ such that
\begin{equation}\label{gascon}
\sup_{u\in\Delta}\p\!{\sf Op}^\tau(a)u-u\!\p_{L^2(\G)}\,\,\le\,\epsilon\,.
\end{equation}
\item
Let $\mathcal X$ be a Banach space. An element $T\in\mathbb B\big[\mathcal X,L^2(\G)\big]$ is a compact operator if and only if for every $\epsilon>0$ there exists $a\in\mathscr D(\Gamma)$ such that
\begin{equation*}\label{gascan}
\p\!{\sf Op}^\tau(a)T-T\!\p_{\mathbb B[\mathcal X,L^2(\G)]}\,\,\le\,\epsilon\,.
\end{equation*}
\item
Let $\mathscr L\subset \mathbb B[L^2(\G)]$ be a family of bounded operators. Then $\mathscr L$ is a relatively compact family of compact operators in $\,\mathbb K[L^2(\G)]$ if and only if for every $\epsilon>0$ there exists $a\in\mathscr D(\Gamma)$ such that
\begin{equation*}\label{gascun}
\sup_{T\in\mathscr L}\big(\p\!{\sf Op}^\tau(a)T-T\!\p_{\mathbb B[L^2(\G)]}+\p\!{\sf Op}^\tau(a)T^*-T^*\!\p_{\mathbb B[L^2(\G)]}\big)\,\le\,\epsilon\,.
\end{equation*}
\end{enumerate}
\end{Theorem}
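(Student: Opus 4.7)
My plan is to extract parts (2) and (3) from part (1) by standard functional-analytic manipulations, so the real work lies in (1). The pivotal observation is that the set $\{{\sf Op}^\tau\!(a):a\in\mathscr D(\Gamma)\}$ is norm-dense in $\mathbb K\big[L^2(\G)\big]$: since $\mathcal D(\G)$ is dense in $L^2(\G)$ and $\mathscr D(\wG)={\fscr F}[\mathcal D(\G)]$ is dense in $\mathscr B^2(\wG)$, the tensor product $\mathscr D(\Gamma)$ is dense in $\mathscr B^2(\Gamma)$; the unitarity of $\,{\sf Op}^\tau:\mathscr B^2(\Gamma)\to\mathbb B^2\big[L^2(\G)\big]$ from Theorem~\ref{carnat}(3) then gives density in the Hilbert--Schmidt norm, and since finite-rank operators are norm-dense in $\mathbb K\big[L^2(\G)\big]$, density in operator norm follows. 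In particular every ${\sf Op}^\tau\!(a)$ with $a\in\mathscr D(\Gamma)$ is compact.

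For the ``if'' direction of (1), if $\sup_\Delta\p{\sf Op}^\tau\!(a)u-u\p\le\epsilon$ then $\Delta$ lies within $\epsilon$ of the relatively compact set ${\sf Op}^\tau\!(a)(\Delta)$, so $\Delta$ is totally bounded. For the converse I would build an approximate identity inside the family above: take finite-rank orthogonal projections $P_n\to I$ strongly on the separable space $L^2(\G)$, and use norm-density to choose $a_n\in\mathscr D(\Gamma)$ with $\p{\sf Op}^\tau\!(a_n)-P_n\p\le 1/n$. Then $K_n:={\sf Op}^\tau\!(a_n)\to I$ strongly with $\sup_n\p K_n\p\le 2$, and the standard fact that a uniformly bounded strongly convergent sequence converges uniformly on totally bounded sets, applied to the relatively compact $\Delta$, delivers an $n$ such that $a:=a_n$ meets the bound.

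Part (2) is then immediate: ${\sf Op}^\tau\!(a)T$ is compact and a norm limit of compact operators is compact, while conversely, if $T$ is compact then $T(B_{\mathcal X})$ is relatively compact in $L^2(\G)$, so (1) applied to this set yields the operator-norm estimate after taking the supremum over $B_{\mathcal X}$. For the ``only if'' direction of (3), relative compactness of $\mathscr L$ in $\mathbb K\big[L^2(\G)\big]$ forces both $\mathscr L(B_{L^2})$ and $\mathscr L^*(B_{L^2})$ to be relatively compact in $L^2(\G)$ -- extract a norm-convergent subsequence in $\mathscr L$ and use compactness of its limit on $B_{L^2}$ -- so applying (1) to their union produces a single $a$ satisfying both estimates. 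For the ``if'' direction of (3), taking adjoints in the second estimate gives $\p T-T{\sf Op}^\tau\!(a)^*\p\le\epsilon$, hence $\p T-{\sf Op}^\tau\!(a)T{\sf Op}^\tau\!(a)^*\p\le\epsilon\bigl(1+\p{\sf Op}^\tau\!(a)\p\bigr)$; each $T\in\mathscr L$ is compact by (2), and total boundedness of $\mathscr L$ will follow from the auxiliary fact that for compact $K_1,K_2$ and a norm-bounded family $\mathscr L'$, the set $\{K_1 T K_2:T\in\mathscr L'\}$ is relatively compact in $\mathbb B\big[L^2(\G)\big]$ (extract a weak-operator convergent subsequence from $\mathscr L'$, then upgrade to norm convergence using compactness of $K_1$ and total boundedness of $K_2(B_{L^2})$).

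The only genuine difficulty is the approximate-identity construction; its elegance is that it avoids any explicit Dirac-net adapted to $\tau$ -- which would be awkward since $\tau$ is merely measurable and $\G$ need not even be a Lie group -- by using instead the unitarity of ${\sf Op}^\tau$ on Hilbert--Schmidt symbols together with the strong convergence of finite-rank spectral projections on the separable Hilbert space $L^2(\G)$.
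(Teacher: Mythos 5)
Your parts (1) and (2) follow essentially the paper's own route. For the ``only if'' half of (1) the paper takes the orthogonal projection $P_F$ onto the span of a finite $\epsilon/4$-net of $\Delta$ and approximates this single finite-rank operator by some ${\sf Op}^\tau(a)$, $a\in\mathscr D(\Gamma)$, in the Hilbert--Schmidt (hence operator) norm, using exactly the density of $\mathscr D(\Gamma)$ in $\mathscr B^2(\Gamma)$ and the unitarity of ${\sf Op}^\tau$ that you invoke; your approximate-identity variant ($P_n\to I$ strongly, $\Vert{\sf Op}^\tau(a_n)-P_n\Vert\le 1/n$, uniform convergence on totally bounded sets) is a harmless repackaging of the same idea, and your ``if'' half and your part (2) coincide with the paper's. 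Part (3) is where you genuinely diverge: the paper reduces it to the Anselone--Palmer theorem ($\mathscr L$ is relatively compact in $\mathbb K\big[L^2(\G)\big]$ iff $\mathscr L$ and $\mathscr L^*$ are collectively compact), quoted as a ``rather deep fact'', whereas you replace it by an elementary two-sided compression argument (WOT-compactness of bounded sets, plus the fact that a compact operator turns weak convergence into norm convergence). Your ``only if'' half of (3), applying (1) to $\mathscr L(B)\cup\mathscr L^*(B)$, is correct and more self-contained than the citation in the paper.

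There is, however, a genuine flaw in your ``if'' half of (3) as written: the estimate $\Vert T-{\sf Op}^\tau(a)\,T\,{\sf Op}^\tau(a)^*\Vert\le\epsilon\big(1+\Vert{\sf Op}^\tau(a)\Vert\big)$ does not suffice, because $a$ depends on $\epsilon$ and the hypothesis gives no control whatsoever on $\Vert{\sf Op}^\tau(a)\Vert$ (the symbol may be large on the orthogonal complement of the ranges of the $T$'s and $T^*$'s without affecting the displayed condition), so the right-hand side need not become small as $\epsilon\to 0$ and total boundedness of $\mathscr L$ does not follow from this step. The repair is one line: having chosen $a$ for $\epsilon$, apply the hypothesis a second time with $\epsilon':=\epsilon/\big(1+\Vert{\sf Op}^\tau(a)\Vert\big)$ to obtain $a'$, and compress as ${\sf Op}^\tau(a)\,T\,{\sf Op}^\tau(a')^*$, which is within $2\epsilon$ of $T$ uniformly in $T\in\mathscr L$ and still falls under your auxiliary fact. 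Note also that both your auxiliary fact and the paper's own derivation of collective compactness use $\sup_{T\in\mathscr L}\Vert T\Vert<\infty$, which is not a consequence of the displayed condition (a family $\{n\,P\}_{n\in\N}$ with $P$ a rank-one projection lying in ${\sf Op}^\tau\big(\mathscr D(\Gamma)\big)$ satisfies it); this norm-boundedness should be read as part of the hypothesis on $\mathscr L$, in the same way that $\Delta$ is assumed bounded in part (1) --- a point your proposal shares with the paper rather than a defect of yours alone.
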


\begin{proof}
1. If $\Delta$ is relatively compact, it is totally bounded. Thus, for every $\epsilon>0$\,, there is a finite set $F$
such that for each $u\in\Delta$ there exists $u'\in F$ with $\p\!u-u'\!\p_{L^2(\G)}\,\le\epsilon/4$\,. This finite subset generates a finite-dimensional subspace $\H_F\subset L^2(\G)$ with finite-rank corresponding projection $P_F$\,. Then for every $u\in\Delta$\,, recalling our choice  for $u'$ and the fact that $P_Fu'=u'$, one gets
\begin{equation*}\label{bourvi}
\begin{aligned}
\p\!P_Fu-u\!\p_{L^2(\G)}\,&\le\,\p\!P_Fu-P_Fu'\!\p_{L^2(\G)}+\p\!P_Fu'-u'\!\p_{L^2(\G)}+\p\!u'-u\!\p_{L^2(\G)}\\
&\le 2\p\!u-u'\!\p_{L^2(\G)}\,\le\epsilon/2\,.
\end{aligned}
\end{equation*}
Let now $M:=\sup_{u\in\Delta}\!\p\!u\!\p_{L^2(\G)}$\,; if we find $a\in\mathscr D(\Gamma)$ such that 
\begin{equation}\label{destul}
\p\!{\sf Op}^\tau(a)-P_F\!\p_{\mathbb B[ L^2(\G)]}\,\le\epsilon/2M\,
\end{equation} 
one writes for every $u\in\Delta$
$$
\p\!{\sf Op}^\tau(a)u-u\!\p_{L^2(\G)}\,\le\,\p\!{\sf Op}^\tau(a)u-P_Fu\!\p_{L^2(\G)}+\p\!P_F u-u\!\p_{L^2(\G)}\,\le\epsilon/2+\epsilon/2=\epsilon
$$
and the formula \eqref{gascon} is proved. 

Since $\mathcal D(\G\times\G)$ is dense in $L^2(\G\times\G)$\,, the subspace $\mathscr D(\Gamma)$ is dense in $\mathscr B^2(\Gamma)$\,.  Consequently, ${\sf Op}^\tau:\mathscr B^2(\Gamma)\rightarrow\mathbb B^2[L^2(\G)]$ being unitary, one even gets an improved version of \eqref{destul} with the operator norm replaced by the Hilbert-Schmidt norm. This finishes the ``only if" implication.

\medskip
We now prove the converse assertion. Fix $\epsilon>0$ and choose $a\in\mathscr D(\Gamma)$ such that
$$
\sup_{u\in\Delta}\!\p\!{\sf Op}^\tau(a)u-u\!\p_{L^2(\G)}\,\le\epsilon/2\,.
$$
Since $\mathscr D(\Gamma)\subset\mathscr B^2(\Gamma)$\,, the operator ${\sf Op}^\tau(a)$ is Hilbert-Schmidt, in particular compact. The set $\Delta$ is assumed bounded, thus the range ${\sf Op}^\tau(a)\Delta$ is totally bounded. Consequently, there is a finite set $E$
such that for each $u\in\Delta$ there exists $u''\in E$ satisfying $\p\!{\sf Op}^\tau(a)u-u''\!\p_{L^2(\G)}\,\le\epsilon/2$\,. Therefore
$$
\p\!
u-u''\!\p_{L^2(\G)}\,\le\,\p\!u-{\sf Op}^\tau(a)u\!\p_{L^2(\G)}+\p\!{\sf Op}^\tau(a)u-u''\!\p_{L^2(\G)}\,\le\,\epsilon/2+\epsilon/2=\epsilon\,,
$$
so the set $\Delta$ is totally bounded, thus relatively compact. 

\medskip
2. The operator $T$ is compact if and only if $\,\Delta:=T\big(\{w\in\mathcal X\mid \,\p\!w\!\p_{\mathcal X}\,\le 1\}\big)$ is relatively compact in the Hilbert space $L^2(\G)$\,. By 1, this happens exactly when for every $\epsilon>0$ there is a symbol $a$ in $\mathscr D(\Gamma)$ such that 
\begin{equation*}\label{papuc}
\p\!{\sf Op}^\tau(a)T-T\!\p_{\mathbb B[\mathcal X,L^2(\G)]}\,\,=\sup_{\p w\p_\mathcal X\le 1}\!\p\![{\sf Op}^\tau(a)-1](Tw)\!\p_{L^2(\G)} \,\le\epsilon\,.
\end{equation*}

\medskip
3. The set $\mathscr L$ is called {\it collectively compact} if $\,\,\bigcup_{T\in\mathscr L}T\big(\{u\in L^2(\G)\mid \,\p\!u\!\p_{L^2(\G)}\,\le 1\}\big)$ is relatively compact in $L^2(\G)$\,.
It is a rather deep fact \cite{An,Pa} that $\mathscr L$ is a relatively compact subset of $\,\mathbb K[L^2(\G)]$ with respect to the operator norm if and only if both $\mathscr L$ and $\,\mathscr L^*:=\{T^*\mid T\in\mathscr L\}$ are collectively compact. This and the point 2 lead to the desired conclusion.
\end{proof}

\begin{Remark}\label{militar}
{\rm In Subsection \ref{fourtinitin} we are going to introduce multiplication operators ${\sf Mult}(f)$ and left convolution operators ${\sf Conv}_L(g)$\,. Completing Theorem \ref{alaman}, one can easily prove that a bounded subset $\Delta$ of $L^2(\G)$ is relatively compact if and only if for every $\epsilon>0$ there exist $f,g\in\mathcal C_{\rm c}(\G)$ such that
\begin{equation*}\label{spital}
\sup_{u\in\Delta}\big(\p\!{\sf Mult}(f)u-u\!\p_{L^2(\G)}+\p\!{\sf Conv}_L(g)u-u\!\p_{L^2(\G)}\!\big)\le\epsilon\,.
\end{equation*}
Such type of results, in a much more general setting, have been proved in \cite{Fei}. They are not depending on the existence of a pseudo-differential calculus.
On the other hand, essentially by the same proof, we could assign $f,g$ to the Bruhat space $\mathcal D(\G)$\,, which is not covered by \cite{Fei}.
}
\end{Remark}

\section{Right and left quantizations}\label{fourtamin}

Our construction of the pseudo-differential calculus ${\sf Op}^\tau$ started from a concrete expression \eqref{frieda} for the Weyl system $W^\tau\equiv W^\tau_R$\,; we set for $x,y\in\G\,,\,\xi\in\wG$ and $\Theta\in L^2(\G,\H_\xi)$
\begin{equation}\label{frrieda}
\[W_R^\tau(\xi,x)\Theta\]\!(y):=\xi\!\[y(\tau x)^{-1}\]^*[\Theta(yx^{-1})]\,.
\end{equation}
The extra index $R$ hints to the fact that right translations are used in \eqref{frrieda}. Building on \eqref{frrieda} we constructed a ``right" pseudo-differential calculus ${\sf Op}^\tau\equiv{\sf Op}^\tau_R$ given on suitable symbols $a$ by
\begin{equation}\label{bilfrred}
\[{\sf Op}^\tau_R(a)u\]\!(x)=\int_\G\!\Big(\int_{\wG}\,{\rm Tr}_\xi\Big[\xi(y^{-1}x)a\big(x\tau(y^{-1}x)^{-1}\!,\xi\big)\Big]d\wm(\xi)\Big)u(y)d\m(y)\,.
\end{equation}
We recall that one gets integral operators, i.e. one can write
\begin{equation}\label{zzaruz}
{\sf Op}^\tau_R={\sf Int}\circ{\sf Ker}_R^\tau={\sf Int}\circ{\rm CV}_R^\tau\circ({\sf id}\otimes{\fscr F}^{-1})\,,
\end{equation}
in terms of a partial Fourier transformation and the change of variables 
\begin{equation}\label{religie}
{\rm cv}^\tau\equiv{\rm cv}^\tau_R:\G\times\G\rightarrow\G\times\G\,,\quad {\rm cv}_R^\tau(x,y):=\big(x\tau(y^{-1}x)^{-1}\!,y^{-1}x\big)\,.
\end{equation}
Besides \eqref{frrieda} there is (at least) another version of a Weyl system, involving translations to the left, given by
\begin{equation}\label{ffrieda}
\[W^\tau_L(\xi,x)\Theta\]\!(y):=\xi\!\[(\tau x)^{-1}y\]^*[\Theta(x^{-1}y)]\,.
\end{equation}
Using it, by arguments similar to those of Subsections \ref{tintin} and \ref{teletin}, one gets {\it a left pseudo-differential calculus}
\begin{equation}\label{bilfret}
\[{\sf Op}^\tau_L(a)u\]\!(x)=\int_\G\!\Big(\int_{\wG}\,{\rm Tr}_\xi\Big[\xi(xy^{-1})a\big(\tau(xy^{-1})^{-1}x,\xi\big)\Big]d\wm(\xi)\Big)u(y)d\m(y)\,,
\end{equation}
which can also be written as
\begin{equation}\label{zzaruzz}
{\sf Op}^\tau_L={\sf Int}\circ{\sf Ker}_L^\tau={\sf Int}\circ{\rm CV}_L^\tau\circ({\sf id}\otimes{\fscr F}^{-1})\,,
\end{equation}
in terms of a different change of variables 
\begin{equation*}\label{religgie}
{\rm cv}^\tau_L:\G\times\G\rightarrow\G\times\G\,,\quad {\rm cv}_L^\tau(x,y):=\big(\tau(xy^{-1})^{-1}x,xy^{-1}\big)\,.
\end{equation*}
Once again we get a unitary map $\,{\sf Op}^\tau_L:\mathscr B^2(\Gamma)\rightarrow\mathbb B^2\big[L^2(\G)\big]$ and all the results obtained above have, mutatis mutandis, analogous versions in the left calculus. In particular, ${\sf Op}^\tau_L$ also have extensions to distribution spaces connected to the Bruhat space, as in Section \ref{fergulin}.

\begin{Remark}\label{ordering}
{\rm The parameter $\tau$ is connected to ordering issues even in the standard case $\G=\R^n$. In general, another ordering problem comes from the non-commutativity of the group $\G$ and the non-commutativity of $\mathbb B(\H_\xi)$ for each irreducible representation $\xi:\G\to\mathbb B(\H_\xi)$\,. It is to this problem that we refer now. It is worth writing again the two quantizations for the simple case $\tau(x)={\sf e}$:
\begin{equation}\label{fred0d}
\[{\sf Op}_R(a)u\]\!(x)= \int_{\wG}{\rm Tr}_\xi\!\[\xi(x)a(x,\xi) \widehat{u}(\xi)\]\!d\wm(\xi)\,,
\end{equation}
following easily from \eqref{bilfrred}, and
\begin{equation}\label{fred0s}
\[{\sf Op}_L(a)u\]\!(x)= \int_{\wG}{\rm Tr}_\xi\!\[\xi(x)\widehat{u}(\xi)a(x,\xi)\]\!d\wm(\xi)\,,
\end{equation}
following from \eqref{bilfret}. The two expressions coincide if $\G$ is Abelian, since then each $\H_\xi$ will be $1$-dimensional. We will say more on this in Subsections \ref{firea} and \ref{fourtinitin}.
}
\end{Remark}

\begin{Remark}\label{equation}
{\rm We note that the choices of left or right quantizations as in \eqref{fred0d} and \eqref{fred0s} may lead to parallel equivalent (as in the case of compact Lie groups) or non-equivalent (as in the case of graded Lie groups) theories.
In the main body of the article we adopted the conventions leading to ${\sf Op}_R$\,, mainly to recover the compact \cite{RT} and the nilpotent \cite{FR} case (both already exposed in book form) as particular cases. But the left quantization is connected to the formalism of crossed product $C^*$-algebras, as will be seen subsequently, and this can be very useful for certain applications.

We also note that there may be no canonical way of calling the quantization ``left'' or ``right''. Thus, the terminology was opposite in \cite[Remark 10.4.13]{RT}, although it was natural in that context. In the present paper, the adopted terminology is related to the group actions in \eqref{frrieda} and \eqref{ffrieda}, respectively. It also seems natural from the formula
${\sf Op}_L^\tau\!={\sf Sch}_L^\tau\circ({\sf id}\otimes{\fscr F})^{-1}$ obtained later in \eqref{surprize}, where
${\sf Sch}_L^\tau=r\rtimes^\tau\! L\,$ appears in \eqref{SCHL} as the integrated form of the Schr\"odinger representation,
with the left-regular group action $L$ given by  $\[L(y)v\]\!(x)=v\!\(y^{-1}x\),$ and multiplication $r(f)v=f v\,.$
In any case, we refer to Section \ref{firea} for further interpretation of the quantization formulae in terms of the 
appearing Schr\"odinger representations.
}
\end{Remark}

\begin{Remark}\label{schnell}
{\rm The measurable map $\,\si_L:\G\rightarrow\G$ is called {\it a symmetry function with respect to the left quantization} if one has ${\sf Op}^{\si_L}_{\,L}(a)^*={\sf Op}^{\si_L}_{\,L}(a^\star)$ for every $a\in\mathscr B^2(\Gamma)$\,. As in Proposition \ref{simfunia}, one shows that ${\sf Op}_L^{\tau}(a)^*={\sf Op}_L^{\widehat\tau}(a^\star)$\,, for $\,\widehat\tau(x):=x\,\tau(x^{-1})$\,, so $\si_L$ must satisfy this time $\si_L(x)=x\,\si_L(x^{-1})$ (almost everywhere). An analog of Proposition \ref{zponentialaa} also holds. For central extensions, instead of \eqref{hunna}, one must set
\begin{equation*}\label{hunnak}
\si_L(h,n):=\Big(\si_{\sf H}(h),\si_{\sf N}\big[\varpi\big(h,\si_{\sf H}(h^{-1})\big)n\big]\Big)\,.
\end{equation*}
Note that, in the Lie exponential case, the function \eqref{clantaa} is a symmetry function simultaneously to the left and to the right, cf. \eqref{dutaa}.
}
\end{Remark}

Let $a$ be an element of $\mathscr B^2(\Gamma)$ and $\tau,\tau':\G\rightarrow\G$ two measurable functions. One has
\begin{equation*}\label{dandana}
{\sf Op}^{\tau'}_L(a)={\sf Int}\big[{\sf Ker}_{L,a}^{\tau'}\big]\quad{\rm and}\quad{\sf Op}^{\tau}_R(a)={\sf Int}\big[{\sf Ker}_{R,a}^{\tau}\big]\,.
\end{equation*}
It is easy to deduce from \eqref{zzaruz} and \eqref{zzaruzz} the connection between the left and the right kernel:
\begin{equation*}\label{leftright}
{\sf Ker}_{L,a}^{\tau'}={\rm CV}_L^{\tau'}\big({\rm CV}_R^\tau\big)^{-1}\big[{\sf Ker}_{R,a}^{\tau}\big]\,,
\end{equation*}
meaning that one has $\,{\sf Ker}_{L,a}^{\tau'}={\sf Ker}_{R,a}^\tau\circ{\rm cv}^{\tau,\tau'}_{R,L}$\,, where $\,{\rm cv}^{\tau,\tau'}_{R,L}:=\big({\rm cv}^\tau_{R}\big)^{-1}\circ{\rm cv}^{\tau'}_{L}\,$ is explicitly
\begin{equation*}\label{schimb}
{\rm cv}^{\tau,\tau'}_{R,L}(x,y)=\big(\tau'(xy^{-1})^{-1}x\tau(xy^{-1}),\tau'(xy^{-1})^{-1}x\tau(xy^{-1})yx^{-1}\big)\,.
\end{equation*}
This relation looks frightening, but particular cases are nicer. Setting $\tau(x)={\sf e}=\tau'(x)$ for instance, one gets $\,{\rm cv}^{{\sf e},{\sf e}}_{R,L}(x,y)=\big(x,xyx^{-1}\big)$\,, while $\tau={\sf id}=\tau'$ leads to $\,{\rm cv}^{{\sf id},{\rm id}}_{R,L}(x,y)=\big(yxy^{-1},y\big)$\,.

Investigating when ${\rm cv}^{\tau,\tau'}_{R,L}={\sf id}_{\G\times\G}$ holds (leading to ${\sf Op}^{\tau'}_L(a)={\sf Op}^{\tau}_R(a)$ for every $a$), one could be disappointed. It comes out quickly that $xyx^{-1}=y$ for all $x,y$ is a necessary condition, so the group $\G$ must be Abelian! Then $\tau=\tau'$ is the remaining condition.

\section{The $C^*$-algebraic formalism}\label{fourtin}

In this section we describe a general formalism in terms of $C^*$-algebras that becomes useful as a background
setting for pseudo-differential operators, in particular allowing working with operators with coefficients taking values in various
Abelian $C^*$-algebras. Especially, an interpretation in terms of crossed product $C^*$-algebras become handy making use of
$C^*$-dynamical systems and their covariant representations. 
We introduce an analogue of the Schr\"odinger representation and its appearance in $\tau$-quantizations.
Consequently, we investigate the role of multiplication and convolution operators in describing general
families of pseudo-differential operators. The formalism is then used to investigate
covariant families of pseudo-differential operators and establish several results concerning their spectra.

\subsection{Crossed product $C^*$-algebras}\label{fourtin}

We change now the point of view and place the pseudo-differential calculus in the setting of $C^*$-algebras generated by actions of our group $\G$ on suitable function algebras. For a full general treatment of $C^*$-dynamical systems and their crossed products we refer to \cite{Pe,Wi}.

\begin{Definition}\label{wilfred}
{\rm A $C^*$-dynamical system} is a triple $(\A,\theta,\G)$ where
\begin{itemize}
\item
$\G$ is a locally compact group with Haar measure $\m$\,, 
\item
$\,\A$ is a $C^*$-algebra,
\item
$\theta:\G\rightarrow{\rm Aut}(\A)$ is a strongly continuous action by automorphisms.
\end{itemize}
\end{Definition}

The third condition means that each $\theta_x:\A\rightarrow\A$ is a $C^*$-algebra isomorphism, the map $\G\ni x\mapsto\th_x(f)\in\A$ is continuous for every $f\in\A$ and one has $\th_x\circ\th_y=\th_{xy}$ for all $x,y\in\G$\,.

\begin{Definition}
\begin{enumerate}
\item
To a $C^*$-dynamical system $(\A,\theta,\G)$ we associate the Banach $^*$-algebra structure on $L^1(\G;\A)$ (the space of all Bochner integrable functions $\G\to\A$) given by
\begin{equation*}\label{ulrich}
\p\!\Phi\!\p_{(1)}\,:=\int_{\G}\!\p\!\Phi(x)\!\p_{\A}d\m(x) \,,
\end{equation*}
\begin{equation*}\label{diamond}
(\Phi\diamond \Psi)(x):=\int_{\G}\!\Phi(y)\,\theta_y\!\left[\Psi(y^{-1}x)\right]d\m(y)\,,
\end{equation*}
\begin{equation*}\label{willibrant}
\Phi^\diamond(x):=\theta_x\!\left[\Phi(x^{-1})^*\right]\,.
\end{equation*}
\item
Then {\rm the crossed product $C^*$-algebra} $\A\!\rtimes_\theta\!\G:={\rm Env}\!\left[L^1(\G;\A)\right]$ is the enveloping $C^*$-algebra of this Banach $^*$-algebra, i.e
its completion in the universal norm
\begin{equation*}\label{uninorm}
\p\!\Phi\!\p_{\rm univ}\,:=\,\sup_{\Pi}\p\!\Pi(\Phi)\!\p_{\mathbb B(\H)}\,,
\end{equation*}
where the supremum is taken over all the $^*$-representations $\,\Pi:L^1(\G,\A)\rightarrow\mathbb B(\H)$\,. 
\end{enumerate}
\end{Definition}

The Banach space $L^1(\G;\A)$ can be identified with the projective tensor product $\A\,\overline\otimes\,L^1(\G)$\,, and $\mathcal C_{\rm c}(\G;\A)$\,, the space of all $\A$-valued continuous compactly supported function on $\G$\,, is a dense $^*$-subalgebra of $L^1(\G,\A)$ and of $\A\!\rtimes_\theta\!\G$ (cf. \cite{Wi}).

\begin{Definition}\label{peter}
Let $(\A,\theta,\G)$ be a $C^*$-dynamical system.  {\rm A covariant representation} is a triple $(r,T,\H)$ where
\begin{itemize}
\item
$\H$ is a Hilbert space,
\item
$T:\G\rightarrow\mathbb U(\H)$ is a (strongly continuous) unitary representation,
\item
$r:\A\rightarrow\mathbb B(\H)$ is a $^*$-representation,
\item
$T(x)r(f)T(x)^*=r\left[\theta_x(f)\right]$\,, \,for every $f\in\A$ and $x\in\G$\,.
\end{itemize}
\end{Definition}

\noindent
It is known that there is a one-to-one correspondence between 
\begin{itemize}
\item
covariant representations of the $C^*$-dynamical system $(\A,\theta,\G)$\,,
\item
non-degenerate $^*$-representations of the crossed product $\A\!\rtimes_\theta\!\G$\,.
\end{itemize}

We only need the direct correspondence: {\it The integrated form} of the covariant representation $(r,T,\H)$ is uniquely defined by $r\!\rtimes\!T:L^1(\G;\A)\rightarrow\mathbb B(\H)$\,, with
\begin{equation*}\label{maximilian}
(r\!\rtimes\!T)(\Phi):=\!\int_\G\!r[\Phi(x)]T(x)d\m(x)\,;
\end{equation*}
then the unique continuous extension $\,r\!\rtimes\!T:\A\!\rtimes_\theta\!\G\rightarrow\mathbb B(\H)$ is justified by the universal property of the envelopping $C^*$-algebra.

\medskip
This is the formalism one usually encounters in the references treating crossed products \cite{Pe,Wi}; in terms of pseudo-differential operators this would only cover the case $\tau(\cdot)=\e$\,, i.e. the Kohn-Nirenberg type quantization. To treat the general case of a measurable map $\tau:\G\rightarrow\G$\,, one needs the modifications
\begin{equation}\label{diamont}
(\Phi\diamond^\tau\!\Psi)(x):=\int_{\G}\theta_{\tau(x)^{-1}\tau(y)}[\Phi(y)]\,\theta_{\tau (x)^{-1}y\tau(y^{-1}x)}\!\left[\Psi(y^{-1}x)\right]d\m(y)\,,
\end{equation}
\begin{equation}\label{friedrichs}
\Phi^{\diamond^\tau}\!(x):=\theta_{\tau(x)^{-1}x\tau(x^{-1})}\!\[\Phi(x^{-1})\]^*,
\end{equation}
in the $^*$-algebra structure of $L^1(\G;\A)$ and the next modification of the integrated form of a covariant representation $(r,T,\H)$ as
\begin{equation}\label{max}
(r\!\rtimes^\tau\!T)(\Phi):=\!\int_\G r\!\[\theta_{\tau (x)}(\Phi(x))\]T(x)\,d\m(x)\,.
\end{equation}
By taking enveloping $C^*$-algebras, one gets a family $\{\A\!\rtimes_\theta^{\tau}\!\G\}_\tau$ of $C^*$-algebras indexed by all the measurable mappings $\tau:\G\rightarrow\G$\,. 

\begin{Remark}\label{labush}
{\rm In fact all these $C^*$-algebras are isomorphic: $\A\!\rtimes_\theta^{\tau'}\!\!\G\overset{\nu_{\tau\tau'}}{\longrightarrow}\A\!\rtimes_\theta^{\tau}\!\G$ is an isomorphism, uniquely determined by its action on $L^1(\G;\A)$ defined as
\begin{equation*}\label{inge}
\left(\nu_{\tau\tau'}\Phi\right)(x):=\theta_{\tau(x)^{-1}\tau'(x)}[\Phi(x)]\,.
\end{equation*}
The family of isomorphisms satisfy
\begin{equation*}\label{daniel}
\nu_{\tau_1\tau_2}\circ\nu_{\tau_2\tau_3}=\nu_{\tau_1\tau_3}\,,\quad\ \nu_{\tau\tau'}^{-1}=\nu_{\tau'\tau}\,,
\end{equation*}
and the relation $r\!\rtimes^{\tau'}\!\!T=(r\!\rtimes^{\tau}\!T)\circ\nu_{\tau\tau'}$ is easy to check. An important ingredient is the fact that $\nu_{\tau\tau'}$ leaves the space $L^1(\G;\A)$ invariant (actually it is an isometry).
}
\end{Remark}

\begin{Remark}\label{mamsatu}
{\rm For further use, let us also examine $^*$-morphisms in the setting of crossed products (cf \cite{Wi}). Assume that $(\A,\theta,\G)$ and 
$(\A',\theta',\G)$ are $C^*$-dynamical systems and $\gamma:\A\rightarrow\A'$ is {\it an equivariant $^*$-morphism}, i.e. a $^*$-morphism satisfying 
\begin{equation}\label{varza}
\gamma\circ\th_x=\th'_x\circ\gamma\,,\quad\forall\,x\in\G\,.
\end{equation}
One defines 
\begin{equation}\label{varzar}
\gamma^\rtimes:L^1(\G;\A)\rightarrow L^1(\G;\A')\,,\quad\big[\gamma^\rtimes(\Phi)\big](x):=\gamma[\Phi(x)]\,.
\end{equation}
It is easy to check that $\gamma^\rtimes$ is a $^*$-morphism of the two Banach $^*$-algebra structures and thus it extends to a $^*$-morphism 
$\gamma^\rtimes:\A\!\rtimes_\theta^{\tau}\!\G\rightarrow\A'\!\rtimes_{\theta'}^{\tau}\!\G$\,. If $\gamma$ is injective, $\gamma^\rtimes$ is also injective.}
\end{Remark}

\subsection{The Schr\"odinger representation and $\tau$-quantizations}\label{firea}

It will be convenient to assume that $\A$ is a $C^*$-subalgebra of $\mathcal L\mathcal U\mathcal C_{{\rm b}}(\G)$ (bounded, left uniformly continuous functions on $\G$) invariant under left translations and that $\[\theta_y(f)\]\!(x):=f(y^{-1}x)$\,. The maximal choice $\A=\mathcal L\mathcal U\mathcal C_{{\rm b}}(\G)$ is very convenient, but studying ``pseudo-differential operators with coefficients of a certain type, modelled by $\A$", can sometimes be useful. Applications and extensions will appear elsewhere.

\medskip
For $\A$-valued functions $\Phi$ defined on $\G$ and for elements $x,q$ of the group, we are going to use notations as $[\Phi(x)](q)=\Phi(q,x)$\,, interpreting $\Phi$ as a function of two variables. The strange order of these variables is convenient to make the connection with previous sections. We can also understand the action as given by $\theta_y(\Phi(x))(q)=\Phi(y^{-1}q,x)\,.$

Thus on the dense subset $L^1(\G;\A)\subset\A\!\rtimes^\tau_\theta\!\G$ the composition law (\ref{diamont}) becomes more explicit
\begin{equation}\label{diamonda}
(\Phi\di^\tau\!\Psi)(q,x):=\int_{\G}\!\Phi\Big(\tau(y)^{-1}\tau(x)q,y\Big)\,\Psi\Big(\tau(y^{-1}x)^{-1}y^{-1}\tau(x)q,y^{-1}x\Big)d\m(y)\,,
\end{equation}
while the involution (\ref{friedrichs}) becomes 
\begin{equation}\label{willil}
\Phi^{\diamond^\tau}\!(q,x):=\overline{\Phi\big(\tau(x^{-1})^{-1}x^{-1}\tau(x)q,x^{-1}\big)}\,.
\end{equation}

\begin{Remark}\label{asteptai}
{\rm If $\G$ admits a symmetric quantization to the left and $\tau\equiv\si_L$ is a symmetry function to the left, as in Remark \ref{schnell}, the involution boils down to $\Phi^{\diamond^{\si_L}}\!(q,x):=\overline{\Phi\big(q,x^{-1}\big)}$\,.
}
\end{Remark}

In the situation described above, we always have a natural covariant representation $(r,L,\H)$\,, called {\it the Schr\"odinger representation}, given in $\mathcal H:=L^2(\G)$ by
\begin{equation}\label{razvan}
\[L(y)v\]\!(x):=v\!\(y^{-1}x\)\,,\ \quad r(f)v:=f v\,;
\end{equation}
thus $L(y)$ is the unitary left-translation by $y^{-1}$ in $L^2(\G)$ and $r(f)$ is just the operator of multiplication by the bounded function $f$\,.
The corresponding (modified) integrated form 
\begin{equation}\label{SCHL}
{\sf Sch}_L^\tau:=r\rtimes^\tau\! L\,, 
\end{equation}
computed as in \eqref{max}, is given for $\Phi\in L^1(\G;\A)$ and $v\in L^2(\G)$ by the formula
\begin{equation}\label{rada}
\begin{aligned}
\[{\sf Sch}_L^\tau(\Phi)v\]\!(x)=&\int_\G\Phi\big(\tau(z)^{-1}x,z\big)v(z^{-1}x)\,d\m(z)\\
=&\int_\G\Phi\!\left(\tau(xy^{-1})^{-1}x,xy^{-1}\right)\!v(y)\,d\m(y)\,. 
\end{aligned}
\end{equation}
The good surprise is that if we compose ${\sf Sch}^\tau_L$ with the inverse of the partial Fourier transform
one finds again, at least formally, the left pseudo-differential representation \eqref{bilfret} and \eqref{zzaruzz}:
\begin{equation}\label{surprize}
{\sf Op}_L^\tau\!={\sf Sch}_L^\tau\circ({\sf id}\otimes{\fscr F})^{-1}={\sf Int}\circ{\sf CV}^\tau_L\circ\big({\sf id}\otimes{\fscr F}^{-1}\big)\,.
\end{equation}
It is worth comparing this expression of ${\sf Sch}_L^\tau$ in \eqref{rada}
with \eqref{radar}.

To extend the meaning of \eqref{surprize} beyond the $L^2$-theory and to take full advantage of the $C^*$-algebraic formalism, one needs to be more careful. Recall that the Fourier transform defines an injective linear contraction ${\fscr F}:L^1(\G)\rightarrow \mathscr B(\wG)$\,. We already mentioned that $L^1(\G;\A)$ can be identified with the completed projective tensor product $\A\,\overline\otimes\,L^1(\G)$\,. Then, by \cite[Ex.\! 43.2]{Tr}, one gets a linear continuous injection 
\begin{equation*}\label{absenta}
{\sf id}_\A\,\overline\otimes\,{\fscr F}:\A\,\overline\otimes\,L^1(\G)\rightarrow\A\,\overline\otimes\,\mathscr B(\wG)
\end{equation*}
and we endow the image $\big({\sf id}_\A\,\overline\otimes\,{\fscr F}\big)\big[\A\,\overline\otimes\,L^1(\G)\big]$ with the Banach $^*$-algebra structure transported from $L^1(\G;\A)\cong \A\,\overline\otimes\,L^1(\G)\,$ through $\,{\sf id}_\A\,\overline\otimes\,{\fscr F}$\,. 

Let us denote by $\mathfrak C^\tau_{\!\A}$ the envelopping $C^*$-algebra $\A\!\rtimes_\theta^{\tau}\!\G$ of the Banach $^*$-algebra $L^1(\G;\A)$ (with the $\tau$-structure indicated above). Similarly, we denote by $\mathfrak B_{\!\A}^\tau$ the envelopping $C^*$-algebra of the Banach $^*$-algebra $({\sf id}_\A\,\overline\otimes\,{\fscr F})\big[\A\,\overline\otimes\,L^1(\G)\big]$\,. By the universal property of the enveloping functor, ${\sf id}_\A\,\overline\otimes\,{\fscr F}$ extends to an isomorphism $\,\mathfrak F_\A:\mathfrak C^\tau_{\!\A}\rightarrow\mathfrak B^\tau_{\!\A}$\,. 

Now $\,{\sf Op}^\tau_L:={\sf Sch}^\tau_L\circ\mathfrak F_\A^{-1}\,$ defines a $^*$-representation of the $C^*$-algebra $\mathfrak B^\tau_{\!\A}$ in the Hilbert space $L^2(\G)$\,, which is compatible with \eqref{surprize} when both expressions make sense. It seems pointless to use different notations for the two basically identical quantizations, one defined in the $C^*$-algebraical setting, starting from the Schr\"odinger representation, and the other introduced in Section \ref{fourtamin}, built on the family \eqref{ffrieda} of unitary operators. The difference is mearly a question of domains, but each of them can be still extended or restricted (at least for particular classes of groups $\G$\,), being constructed on the versatile operations ${\sf Int}\,, {\sf CV}^\tau, {\fscr F}^{-1}$\,.

\begin{Remark}\label{asha}
{\rm Starting from \eqref{diamonda} and \eqref{willil}, one also considers the transported composition 
\begin{equation*}\label{ute}
a\#^\tau b:=\mathfrak F_\A\big[(\mathfrak F_\A^{-1}a )\di^\tau\!(\mathfrak F_\A^{-1}b)\big]
\end{equation*} 
defined to satisfy ${\sf Op}^\tau_L(a\#^\tau b)={\sf Op}^\tau_L(a)\,{\sf Op}^\tau_L(b)$ as well as the involution 
\begin{equation*}\label{nicoleta}
a^{\#^\tau}\!:=\mathfrak F_\A\[(\mathfrak F_\A^{-1}a)^{\di^\tau}\]
\end{equation*} 
verifying ${\sf Op}^\tau_L(a^{\#^\tau})={\sf Op}^\tau_L(a)^*$. 
}
\end{Remark}

\begin{Remark}\label{froddo}
{\rm As a consequence of Remark \ref{labush} (see also Remark \ref{david}) and of the properties of envelopping $C^*$-algebras, there are isomorphisms $\mu_{\tau,\tau'}:\mathfrak B^\tau_{\!\A} \rightarrow \mathfrak B^{\tau'}_{\!\A}$ leaving $\big({\sf id}_\A\,\overline\otimes\,{\fscr F}\big)\big[L^1(\G;\A)\big]$ invariant and satisfying 
\begin{equation*}\label{labish}
{\sf Op}^\tau_L={\sf Op}^{\tau'}_L\circ\mu_{\tau,\tau'}\,,\quad \tau,\tau':\G\rightarrow\G\,.
\end{equation*}
Therefore, the $C^*$-subalgebra 
\begin{equation}\label{cincin}
\mathfrak D_{\!\A}:={\sf Op}_L^\tau\big(\mathfrak B^\tau_{\!\A}\big)={\sf Sch}_L^\tau\big(\mathfrak C^\tau_{\!\A}\big)\subset\mathbb B\big[L^2(\G)\big]
\end{equation} 
is $\tau$-independent. It could be called {\it the $C^*$-algebra of left global pseudo-differential operators with coefficients in $\A$ on the admissible group $\G$\,.}
}
\end{Remark}

\begin{Proposition}\label{danda}
The $C^*$-algebra $\mathfrak D_{\!\A}$ is isomorphic to the reduced crossed product $\big(\A\!\rtimes_\theta^{\tau}\!\G\big)_{\rm red}$\,. If $\,\G$ is amenable, the representation ${\sf Op}_L^\tau:\mathfrak B^\tau_{\!\A}\rightarrow\mathfrak D_{\!\A}\subset\mathbb B\big[L^2(\G)\big]$ is faithful.
\end{Proposition}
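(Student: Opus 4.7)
My strategy is to prove the two claims separately. First I reduce to the case $\tau\equiv{\sf e}$ using the $\tau$-invariance already noted in \eqref{cincin}: Remark \ref{labush} provides isomorphisms $\nu_{\tau,\tau'}:\mathfrak C^{\tau'}_{\!\A}\to\mathfrak C^\tau_{\!\A}$ satisfying $r\rtimes^{\tau'}\!L=(r\rtimes^\tau L)\circ\nu_{\tau,\tau'}$, so the image $\mathfrak D_{\!\A}$ and its candidate isomorphic model $\big(\A\!\rtimes^\tau_\theta\!\G\big)_{\rm red}$ are both $\tau$-independent. It therefore suffices to handle the standard crossed product ($\tau\equiv{\sf e}$), in which the integrated Schr\"odinger representation \eqref{rada} takes the simple form $[{\sf Sch}_L(\Phi)v](x)=\int_\G\Phi(x,xy^{-1})v(y)\,d\m(y)$.

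The key step is to identify the (amplified) Schr\"odinger representation with a regular covariant representation. Since $\A\subset\mathcal L\mathcal U\mathcal C_{{\rm b}}(\G)$, the multiplication representation $r:\A\rightarrow\mathbb B[L^2(\G)]$ is faithful, so by definition $\big(\A\!\rtimes_\theta\!\G\big)_{\rm red}$ is the image of $\A\!\rtimes_\theta\!\G$ under the integrated form of the regular covariant pair $(\tilde r,\tilde\lambda)$ on $L^2(\G)\otimes L^2(\G)\cong L^2(\G\times\G)$, where
\begin{equation*}
\big[\tilde r(f)\xi\big](y,x)=f(yx)\xi(y,x),\qquad \big[\tilde\lambda(z)\xi\big](y,x)=\xi(z^{-1}y,x).
\end{equation*}
I would then form the amplified Schr\"odinger representation $\widetilde{\sf Sch}_L:={\sf Sch}_L\otimes{\sf id}_{L^2(\G)}$ and introduce the unitary $U$ on $L^2(\G\times\G)$ given by $(U\xi)(y,x):=\xi(yx,x)$, which is unitary by unimodularity of $\G$. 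A short pointwise calculation yields $U\big(r(f)\otimes 1\big)U^{-1}=\tilde r(f)$ and $U\big(L(z)\otimes 1\big)U^{-1}=\tilde\lambda(z)$, hence by integration $U\,\widetilde{\sf Sch}_L\,U^{-1}=\tilde r\!\rtimes\!\tilde\lambda$ on the dense subalgebra $L^1(\G;\A)$, extending by continuity to all of $\mathfrak C_{\!\A}$. Since amplification by an identity factor does not change the kernel, $\ker{\sf Sch}_L$ equals the kernel of the canonical quotient $\A\!\rtimes_\theta\!\G\twoheadrightarrow\big(\A\!\rtimes_\theta\!\G\big)_{\rm red}$, giving $\mathfrak D_{\!\A}\cong\big(\A\!\rtimes_\theta\!\G\big)_{\rm red}$.

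For the amenability statement, I would invoke the classical theorem (see e.g.\ \cite[Thm.\ 7.13]{Wi}) that amenability of $\G$ forces the canonical surjection $\A\!\rtimes_\theta^\tau\!\G\to\big(\A\!\rtimes_\theta^\tau\!\G\big)_{\rm red}$ to be an isomorphism. Combined with the first part this makes ${\sf Sch}_L^\tau:\mathfrak C^\tau_{\!\A}\to\mathfrak D_{\!\A}$ injective; and since ${\sf Op}_L^\tau={\sf Sch}_L^\tau\circ\mathfrak F_{\!\A}^{-1}$ with $\mathfrak F_{\!\A}$ a $C^*$-isomorphism (Remark \ref{froddo}), the representation ${\sf Op}_L^\tau:\mathfrak B^\tau_{\!\A}\rightarrow\mathfrak D_{\!\A}$ is faithful as well.

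The main point requiring care is the intertwining computation: both the direction of the regular covariant pair (which tensor factor carries $\tilde\lambda$) and the precise change of variables defining $U$ are convention-dependent, and unimodularity of $\G$ is essential in order for $U$ to be an isometry. Once the conventions are pinned down the calculation itself is only a few lines, but passing from the dense subalgebra $L^1(\G;\A)$ to the $C^*$-completions relies on continuity of the universal norm together with the universal property of the regular covariant representation; these are the only non-computational subtleties in the argument.
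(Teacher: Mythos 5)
Your proposal is correct and follows essentially the same route as the paper: after reducing to $\tau\equiv{\sf e}$ via $\mathfrak F_{\!\A}$ and the isomorphisms $\nu_{\tau\tau'}$, you conjugate the amplified Schr\"odinger representation into the regular representation of the crossed product by an explicit change-of-variables unitary (your $U\xi(y,x)=\xi(yx,x)$ is the mirror of the paper's $W\nu(q,x)=\nu(q,xq)$, with the tensor factors swapped), identify the image with $\big(\A\!\rtimes_\theta\!\G\big)_{\rm red}$, and invoke amenability for faithfulness. The only cosmetic difference is that you make explicit the faithfulness of $r$ and the dense-subalgebra-to-completion passage, which the paper leaves implicit.
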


\begin{proof}
Of course, it is enough to work with one of the mappings $\tau:\G\rightarrow\G$\,, for instance for $\tau(x)={\sf e}$\,. Since $\mathfrak F_\A$ is an  isomorphism, it suffices to study the $^*$-representation ${\sf Sch}_L:=r\rtimes L:\mathfrak C_\A:=\A\!\rtimes_\th\!\G\rightarrow\mathbb B\big[L^2(\G)\big]$ and its range. For this we are going to recall the left regular $^*$-representation ${\sf Left}$ of the crossed product in the Hilbert space $\mathscr H:=L^2(\G\times\G)\cong L^2\big(\G;L^2(\G)\big)$ and show that ${\sf Sch}_L$ and ${\sf Left}$ are "unitarily equivalent up to multiplicity". The range of ${\sf Left}$ in $\mathbb B\big[L^2(\G\times\G)\big]$ is, by definition, {\it the reduced crossed product} $\big(\A\!\rtimes_\theta^{\tau}\!\G\big)_{\rm red}$\,. Since ${\sf Left}$ is injective if (and only if) $\G$ is amenable \cite{Wi}, this would finish our proof.

The $^*$-representation ${\sf Left}=r'\rtimes L'$ is the integrated form of the covariant representation $(r',L',\mathscr H)$ given by
\begin{equation*}\label{nutret}
[r'(f)\nu](q,x):=f(xq)\nu(q,x)\,,\quad x,q\in\G\,,\,f\in\A\,,\,\nu\in L^2(\G\times\G)\,,
\end{equation*}
\begin{equation*}\label{nutresc}
[L'(y)\nu](q,x):=\nu(q,y^{-1}x)\,,\quad x,y,q\in\G\,,\,\nu\in L^2(\G\times\G)\,.
\end{equation*}
Then the unitary operator $W:L^2(\G\times\G)\rightarrow L^2(\G\times\G)$ defined by
\begin{equation*}\label{nutrie}
(W\nu)(q,x):=\nu(q,xq)
\end{equation*}
satisfies for all $f\in\A$ and $y\in\G$
\begin{equation*}\label{nutrient}
W^*r'(f)W={\rm id}_{L^2(\G)}\otimes r(f)\,,\quad W^*L'(y)W={\rm id}_{L^2(\G)}\otimes L(y)\,,
\end{equation*}
which readily implies the unitary equivalence at the level of $^*$-representations
\begin{equation*}\label{abel}
W^*{\sf Left}(\Phi)W={\rm id}_{L^2(\G)}\otimes{\sf Sch}^\tau_L(\Phi)\,,\quad\forall\,\Phi\in\mathfrak C_\A\,,
\end{equation*}
and we are done. 
\end{proof}

\begin{Remark}\label{anterdiza}
{\rm Explicit descriptions of the $C^*$-algebras $\mathfrak B^\tau_{\!\A}$  are difficult to achieve. Even for $\G=\mathbb R^n$ some of the elements of $\mathfrak B^\tau_{\!\A}$ are not ordinary functions on $(\R^n)^*\times\R^n$\,.
}
\end{Remark}

\begin{Remark}\label{gertrudis}
{\rm Let us denote by $\mathcal C_0(\G)$ the $C^*$-algebra of all the continuous complex-valued functions on $\G$ which converge to $0$ at infinity (they are arbitrarily small outside sufficiently large compact subsets). It is well-known \cite{Wi} that the Schr\"odinger $^*$-representation sends $\mathcal C_0(\G)\!\rtimes_\theta\G$ onto $\mathbb K\!\[L^2(\G)\]\subset\mathbb B\!\[L^2(\G)\]$\,; it is an isomorphism between $\mathcal C_0(\G)\!\rtimes_\theta\!\G$ and $\mathbb K\!\[L^2(\G)\]$ if and only if $\G$ is amenable.
This provides classes of compact global pseudo-differential operators: 
\begin{equation}\label{barbara}
\mathbb K\!\[L^2(\G)\]={\sf Op}^\tau_L\big(\mathfrak B^\tau_{\mathcal C_0(\G)}\big)={\sf Sch}^\tau_L\big[\mathcal C_0(\G)\!\rtimes_\th\! \G\big]\,,
\end{equation}
giving a characterisation of compact operators.
}
\end{Remark}

\begin{Remark}\label{chiliman}
{\rm One sees that, in the process of construction of the crossed product $C^*$-algebra, taking the completion in the envelopping norm supplies a lot of interesting new elements. One has 
\begin{equation*}\label{frang}
L^2(\G\times\G)\cong\mathscr B^2(\wG\times\G)\cong\mathbb B^2\!\left[L^2(\G)\right]\subset\mathbb K\!\left[L^2(\G)\right]\cong\mathcal C_0(\G)\!\rtimes_\theta\!\G=\overline{L^1\big(\G;\mathcal C_0(\G)\big)}
\end{equation*}
(the last expression involves the closure in the enveloping norm), while $L^2(\G\times\G)$ and $L^1\big(\G;\mathcal C_0(\G)\big)$ are incomparable as soon as $\G$ is an infinite group. There are many Hilbert-Schmidt operators whose symbols are not partial Fourier transforms of elements from the class $L^1\big(\G;\mathcal C_0(\G)\big)$\,.
}
\end{Remark}

\begin{Remark}\label{kriemhilda}
{\rm Recall that $\mathbb K\big[L^2(\G)\big]={\sf Op}^\tau_L\big(\mathfrak B^\tau_{\mathcal C_0(\G)}\big)$ is an irreducible family of operators in $L^2(\G)$\,. So if $\mathfrak B$ is a space of symbols containing $({\sf id}\otimes{\fscr F})\big[\mathcal C_{\rm c}\big(\G;\mathcal C_0(\G)\big)\big]$
or $\mathscr B^2(\widehat{\Gamma})$\,, and if ${\sf Op}^\tau_L(b)$ makes sense for every $b\in\mathfrak B$\,, then ${\sf Op}^\tau_L\!\(\mathfrak B\)$ is irreducible. This happens, for instance, if $\mathfrak B=\mathfrak B^\tau_{\!\A}$ and $\mathcal C_0(\G)\subset\A$\,. In many other situations ${\sf Op}^\tau_L\!\(\mathfrak B\)$ could be reducible. Let us set $[R(z)u](x):=u(xz)$\,; a simple computation shows that $R(z){\sf Op}^\tau_L(b)R(z^{-1})={\sf Op}^\tau_L(b_z)$\,, where $b_z(x,\xi):=b(xz,\xi)$\,. Thus, if $b$ does not depend on the first variable, ${\sf Op}^\tau_L(b)$ commutes with the right translations and irreducibility is lost for $\A:=\mathbb C$\,. The same happens if $\A$ is defined through a periodicity (invariance) condition with respect to some nontrivial closed subgroup of $\G$\,.
}
\end{Remark}

\subsection{Multiplication and convolution operators}\label{fourtinitin}

We reconsider the Schr\"odinger covariant representation $\big(r,L,L^2(\G)\big)$ of the $C^*$-dynamical system $(\A,\th,\G)$ where, as before, $\A\subset\mathcal{LUC}_{\rm b}(\G)$ is invariant under left translations. 

\medskip
Let us define ${\sf Conv}_L:L^1(\G)\rightarrow\mathbb B\!\[L^2(\G)\]$ by the formula (interpreted in weak sense)
\begin{equation}\label{gertrudie}
{\sf Conv}_L(f):=\int_\G\!f(y)L(y)d\m(y)=\int_\G\!f(y)W^\tau_L({\mathfrak 1},y)(y)d\m(y)\,.
\end{equation}
Clearly ${\sf Conv}_L(f)$ is the operator of left-convolution with $f$\,: one has ${\sf Conv}_L(f)v=f\ast v$ for every $v\in L^2(\G)$\,. 
It is easy to check the right-invariance:
\begin{equation}\label{crokodil}
{\sf Conv}_L(f)\,R(x)=R(x)\,{\sf Conv}_L(f)\,,\quad\forall\,f\in L^1(\G)\,,\,x\in\G\,.
\end{equation}
The map ${\sf Conv}_L$ extends to a $C^*$-epimorphism from the group $C^*$-algebra $C^*(\G)=\mathbb C\!\rtimes_\th\!\G$ to the reduced group $C^*$-algebra $C^*_{\rm red}(\G)=\big(\mathbb C\!\rtimes_\th\!\G\big)_{\rm red}\subset\mathbb B\!\[L^2(\G)\]$\,, which is an isomorphism if and only if $\G$ is amenable \cite{Wi}.

\medskip
Of course, the family of right-convolution operators $\big\{u\mapsto{\sf Conv}_R(f)u:=u\ast f\mid f\in L^1(\G)\big\}$ is also available and it has similar properties. The analog of \eqref{gertrudie} is in this case 
\begin{equation*}\label{draguetti}
{\sf Conv}_R(f):=\int_\G\!\check f(y)R(y)d\m(y)\,,\quad{\rm with}\quad \check f(y):=f(y^{-1})\,.
\end{equation*}

Note the commutativity property
\begin{equation*}\label{crococ}
{\sf Conv}_L(f)\,{\sf Conv}_R(f')={\sf Conv}_R(f')\,{\sf Conv}_L(f)\,
\end{equation*} 
as well as the identities
\begin{equation*}\label{croco}
{\sf Conv}_L(f)\,{\sf Conv}_L(f')={\sf Conv}_L(f\ast f')\,,\quad{\sf Conv}_R(f)\,{\sf Conv}_R(f')={\sf Conv}_R(f'\ast f)\,.
\end{equation*}

\begin{Remark}\label{blue}
{\rm More generally, one can also define ${\sf Conv}_L(\mu)$ and ${\sf Conv}_R(\mu)$ for any bounded complex Radon measure $\mu\in M^1(\G)$\,.
Let us denote by $\mathfrak L(\G):=[L(\G)]''$ the left von Neumann algebra of $\G$ and by $\mathfrak R(\G):=[R(\G)]''$ the right von Neumann algebra of $\G$\,. One has ${\sf Conv}_L\big[M^1(\G)\big]\subset\mathfrak L(\G)$ and ${\sf Conv}_R\big[M^1(\G)\big]\subset\mathfrak R(\G)$\,.}
\end{Remark}

It is easy to check that 
\begin{equation*}\label{pis}
{\fscr F}\circ{\sf Conv}_L(f)\circ{\fscr F}^{-1}={\sf Dec}_R({\fscr F f})\quad{\rm and}\quad{\fscr F}\circ{\sf Conv}_R(f)\circ{\fscr F}^{-1}={\sf Dec}_L({\fscr F f})\,,
\end{equation*}
where 
\begin{equation*}\label{rac}
{\sf Dec}_R({\fscr F f})\,,{\sf Dec}_L({\fscr F f})\in\mathscr B(\wG):=\int^\oplus_{\wG}\mathbb B(\H_\xi)\,d\wm(\xi)\subset\mathbb B\big[\mathscr B^2(\wG)\big]
\end{equation*}
are decomposable (multiplication) operators defined for every $\varphi\in\mathscr B^2(\wG)$ by
\begin{equation*}\label{rak}
[{\sf Dec}_R({\fscr F f})\varphi](\xi):=\varphi(\xi)({\fscr F f})(\xi)\,,\quad[{\sf Dec}_L({\fscr F f})\varphi](\xi):=({\fscr F f})(\xi)\varphi(\xi)\,.
\end{equation*}

\medskip
We want to compute ${\sf Op}^\tau_L(g\otimes\beta)={\sf Sch}_L^\tau\big[g\otimes({\fscr F}^{-1}\beta)\big]$\,, where $g$ is some bounded uniformly continuous function on $\G$ and the inverse Fourier transform of $\beta$ belongs to $L^1(\G)$\,. Of course we set $(g\otimes\beta)(x,\xi):=g(x)\beta(\xi)\in\mathbb B(\H_\xi)$\,. One gets the formula 
\begin{equation*}\label{crocor}
\(\[{\sf Op}^\tau_L(g\otimes\beta)\]u\)(x)=\int_\G g\big[\tau(xy^{-1})^{-1}x\big]({\fscr F}^{-1}\beta)(xy^{-1})u(y)\,d\m(y)\,,
\end{equation*}
which is not very inspiring for general $\tau$\,. But using the notation ${\sf Mult}(g):= r(g)$ (a multiplication operator in $L^2(\G)$ given in \eqref{razvan})\,, one gets the particular cases, for ${\sf Op}_L\equiv{\sf Op}_L^{\e}$:
\begin{equation*}\label{freya}
\begin{aligned}
\big(\[{\sf Op}_L(g\otimes\beta)\]u\big)(x)&=g(x)\!\int_\G ({\fscr F}^{-1}\beta)(xy^{-1})u(y)\,d\m(y)\\
&=g(x)\!\int_\G ({\fscr F}^{-1}\beta)(z)u(z^{-1}x)\,d\m(z)\,,
\end{aligned}
\end{equation*}
which can be rewritten
\begin{equation}\label{crocodil}
{\sf Op}_L(g\otimes\beta)={\sf Mult}(g)\,{\sf Conv}_L({\fscr F}^{-1}\beta)\,,
\end{equation}
and
\begin{equation*}\label{freir}
\begin{aligned}
\(\[{\sf Op}_L^{{\rm id}}(g\otimes\beta)\]u\)\!(x)&=\int_\G({\fscr F}^{-1}\beta)(xy^{-1})g(y)u(y)\,d\m(y)\\
&=\int_\G({\fscr F}^{-1}\beta)(z)(gu)(z^{-1}x)\,d\m(y)\,,
\end{aligned}
\end{equation*}
i.e.
\begin{equation}\label{krokodil}
{\sf Op}^{{\rm id}}_L(g\otimes\beta)={\sf Conv}_L({\fscr F}^{-1}\beta)\,{\sf Mult}(g)\,.
\end{equation}
Thus in the quantization ${\sf Op}_L\equiv{\sf Op}_L^{\e}$ the operators of multiplication stay at the left and those of left-convolution to the right and vice versa for the quantization ${\sf Op}_L^{\rm id}$\,.

\begin{Remark}\label{strange}
{\rm In both \eqref{crocodil} and \eqref{krokodil} {\it left} convolution operators appear. 
But using the right quantization ${\sf Op}_R^\tau$ one gets
\begin{equation*}\label{crocoror}
\(\[{\sf Op}^\tau_R(g\otimes\beta)\]u\)(x)=\int_\G g\big[x\tau(y^{-1}x)^{-1}\big]({\fscr F}^{-1}\beta)(y^{-1}x)u(y)\,d\m(y)\,,
\end{equation*}
with particular cases
\begin{equation*}\label{crocotril}
{\sf Op}_R(g\otimes\beta)={\sf Mult}(g)\,{\sf Conv}_R({\fscr F}^{-1}\beta)\,,
\end{equation*}
\begin{equation*}\label{krokotril}
{\sf Op}^{{\rm id}}_R(g\otimes\beta)={\sf Conv}_R({\fscr F}^{-1}\beta)\,{\sf Mult}(g)\,,
\end{equation*}
and this should be compared with \eqref{crocodil} and \eqref{krokodil}.
}
\end{Remark}

\begin{Remark}\label{fredegonda}
{\rm As mentioned in Remark \ref{froddo}, the represented $C^*$-algebra 
\begin{equation*}\label{heidi}
\mathfrak D_\A:={\sf Sch}^\tau_L\(\A\rtimes_\th^\tau\G\)={\sf Op}_L^\tau\big[\mathfrak F_\A\!\(\A\rtimes_\th^\tau\G\)\big]\subset\mathbb B\big[L^2(\G)\big]
\end{equation*}
is independent of $\tau$\,. Actually it coincides with the closed vector space spanned by products of the form ${\sf Mult}(g)\,{\sf Conv}_L(f)$ (respectively ${\sf Conv}_L(f)\,{\sf Mult}(g)$) with $g\in\A$ and, say,  $f\in\(L^1\cap L^2\)\!(\G)$ (or even $f\in\mathcal D(\G)$)\,. So this closed vector space is automatically a $C^*$-algebra, although this is not clear at a first sight. The remote reason is the last axiom of Definition \ref{peter}. 
}
\end{Remark}

\begin{Remark}\label{fredegondar}
{\rm In \cite{DR1, DR2}, in the case of a compact Lie group $\G$\,, precise characterisations of the convolution operators belonging to the Schatten-von Neumann classes $\,\mathbb B^p\big[L^2(\G)\big]$ are given. The main result \cite[Th. 3.7]{DR1} holds, with the same proof, for arbitrary compact groups. 

When $\G$ is not compact, the single compact convolution operator is $0={\sf Op}_L^\tau(0)={\sf Conv}_L(0)={\sf Conv}_R(0)$\,. A way to see this is to recall Remark \ref{gertrudis} and to note that the constant function $g=1$ belongs to $\mathcal C_0(\G)$ if and only if $\G$ is compact. Another, more direct, argument is as follows: If $\G$ is not compact then $R(x)$ converges weakly to $0$ when $x\to\infty$\,. Multiplication to the left by a compact operator would improve this to strong convergence. But for $u\in L^2(\G)$ and a compact ${\sf Conv}_L(f)$ one has
$$
\parallel\!{\sf Conv}_L(f)u\!\parallel_{L^2(\G)}\,=\,\parallel\!R(x){\sf Conv}_L(f)u\!\parallel_{L^2(\G)}\,=\,\parallel\!{\sf Conv}_L(f)R(x)u\!\parallel_{L^2(\G)}\underset{x\to\infty}{\longrightarrow} 0
$$
and this implies ${\sf Conv}_L(f)=0$\,. Replacing $R(\cdot)$ by $L(\cdot)$\,, a similar argument shows that the single compact right convolution operator is the null operator.
}
\end{Remark}

\subsection{Covariant families of pseudo-differential operators}\label{midol}

An important ingredient in constructing the Schr\"odinger representation has been the fact that the $C^*$-algebra $\A$ was an algebra of (bounded, uniformly continuous) functions on $\G$\,. If $\A$ is just an Abelian $C^*$-algebra endowed with the action $\rho$ of our group $\G$\,, by Gelfand theory, it is connected to a topological dynamical system $(\Omega,\varrho,\G)$\,. The locally compact space $\Omega$ is the Gelfand spectrum of $\A$ and we have the $\G$-equivariant isomorphism $\A\cong\mathcal C_0(\Omega)$ if the action $\rho_x$ of $x\in\G$ on $\mathcal C_0(\Omega)$ is given just by composition with $\varrho_{x^{-1}}$\,.
In this section we are going to prove that to such a data one associates a covariant family of pseudo-differential calculi with operator-valued symbols. For convenient bundle sections $h$ defined on $\Omega\times\wG$ one gets families $\big\{{\sf Op}^\tau_{(\o)}(h)\mid \o\in\Omega\big\}$ of ``usual" left pseudo-differential operators (the index $L$ will be skiped). By covariance, modulo unitary equivalence, they are actually indexed by the orbits of the topological dynamical system, while their spectra are indexed by the quasi-orbits in $\Omega$\,. 

As before, the locally compact group $\G$ is supposed second countable, unimodular and type I, while $\tau:\G\rightarrow\G$ is measurable. 

Since the Schr\"odinger covariant representation \eqref{razvan} no longer makes sense as it stands, we are going to construct for each point $\o\in\O$ a covariant representation $\big(r_{(\o)},L,L^2(\G)\big)$ and then let the formalism act. One sets explicitly
\begin{equation}\label{avar}
\big[r_{(\o)}(f)u\big](x):=f\big[\varrho_{x}(\o)\big]u(x)\,,\quad f\in\mathcal C_0(\Omega)\,,\,u\in L^2(\G)\,,\,x\in\G\,,
\end{equation}
\begin{equation}\label{avarr}
\big[L(y)u\big](x):=u(y^{-1}x)\,,\quad u\in L^2(\G)\,,\,x,y\in\G\,.
\end{equation}
Proceeding as in Subsection \ref{firea}, one constructs the integrated form $\,{\sf Sch}^\tau_{(\o)}\!:=r_{(\o)}\!\rtimes\!L$ associated to the covariant representation $\big(r_{(\o)},L,L^2(\G)\big)$ and then sets 
\begin{equation}\label{tristan}
{\sf Op}^\tau_{(\o)}:={\sf Sch}^\tau_{(\o)}\circ\mathfrak F_{\mathcal C_0(\O)}^{-1}\,.
\end{equation}
As in Subsection \ref{firea}, the isomorphism $\mathfrak F_\O\equiv\mathfrak F_{\mathcal C_0(\O)}$ is the extension of the Banach $^*$-algebra monomorphism
\begin{equation*}\label{georgel}
{\sf id}_{\mathcal C_0(\O)}\,\overline\otimes\,{\fscr F}:L^1\big(\G;\mathcal C_0(\O)\big)\cong \mathcal C_0(\O)\,\overline\otimes\,L^1(\G)\rightarrow\mathcal C_0(\O)\,\overline\otimes\,\mathscr B(\G)
\end{equation*}
to the enveloping $C^*$-algebra $\mathcal C_0(\O)\!\rtimes^\tau_\rho\!\G$\,; the fact that $\A=\mathcal C_0(\O)$ is more general as before is not important.
Setting $\mathfrak B^\tau_\O\equiv\mathfrak B^\tau_{\mathcal C_0(\O)}$ for the enveloping $C^*$-algebra of $\,\,\big({\sf id}_{\mathcal C_0(\O)}\overline\otimes\,{\fscr F}\big) \big[L^1\big(\G;\mathcal C_0(\O)\big)\big]$ (with the transported structure), we have the isomorphism 
\begin{equation*}\label{mamacu}
\mathfrak F_\O:\mathcal C_0(\O)\!\rtimes^\tau_\rho\!\G\rightarrow\mathfrak B^\tau_\O\,.
\end{equation*} 
One gets for every section 
\begin{equation*}\label{nepasa}
\big\{h(\o,\xi)\in\mathbb B(\H_\xi)\mid \xi\in\wG\,,\,\o\in\O\big\}
\end{equation*} 
from $\big({\sf id}_{\mathcal C_0(\O)}\overline\otimes\,{\fscr F}\big)\big[L^1\big(\G;\mathcal C_0(\O)\big)\big]$ a family of operators 
\begin{equation*}\label{harto}
\Big\{{\sf Op}^\tau_{(\o)}(h)=\big(r_{(\o)}\!\rtimes\!L\big)\big(\mathfrak F_{\mathcal C_0(\O)}h\big)\in\mathbb B\big[L^2(\G)\big]\;\big\vert\;\o\in\O\Big\}
\end{equation*} 
given explicitly (but somewhat formally) by
\begin{equation}\label{fried}
\[{\sf Op}^\tau_{(\o)}(h)u\]\!(x)=\int_\G\Big(\int_{\wG}\!{\rm Tr}_\xi\!\[\xi(xy^{-1})h\big(\varrho_{\tau(xy^{-1})^{-1}x}(\o),\xi\big)\]\!d\wm(\xi)\Big)u(y)d\m(y)\,.
\end{equation}
More generally, the family $\big\{{\sf Op}^\tau_{(\o)}(h)\mid\o\in\O\big\}$ makes sense for $h\in\mathfrak B^\tau_\Omega$\,, but it is no longer clear when the symbol $h$ can still be interpreted as a function on $\O\times\wG$\,.

\begin{Proposition}\label{pecenieg}
Let $h\in\mathfrak B^\tau_\Omega$\,. If $\,\o,\o'$ belong to the same $\varrho$-orbit, then ${\sf Op}^\tau_{(\o)}(h)$ and ${\sf Op}^\tau_{(\o')}(h)$ are unitarily equivalent.
\end{Proposition}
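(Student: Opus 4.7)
The plan is to exhibit an explicit unitary intertwiner, namely a right translation on $\G$, and verify that it intertwines the two covariant representations underlying ${\sf Op}^\tau_{(\omega)}$ and ${\sf Op}^\tau_{(\omega')}$ at the level of the Schr\"odinger representations; the conclusion then transfers through the isomorphism $\mathfrak F_\O^{-1}$.

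Concretely, by assumption there exists $z\in\G$ with $\omega'=\varrho_z(\omega)$. I would set $U:=R(z)$, the operator $[R(z)u](x):=u(xz)$, which is unitary on $L^2(\G)$ since $\G$ is unimodular. The first and only real computation is to verify the two intertwining relations
\begin{equation*}
U\,r_{(\omega)}(f)\,U^*=r_{(\omega')}(f)\,,\qquad U\,L(y)\,U^*=L(y)\,,
\end{equation*}
for every $f\in\mathcal C_0(\O)$ and $y\in\G$. The second is immediate since left and right translations on $\G$ commute. For the first, using \eqref{avar} one has $[U r_{(\omega)}(f)U^*u](x)=f[\varrho_{xz}(\omega)]u(x)=f[\varrho_x(\varrho_z(\omega))]u(x)=[r_{(\omega')}(f)u](x)$, as required.

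Once these two pointwise intertwinings are in hand, the intertwining passes through the ($\tau$-modified) integrated form \eqref{max}: for any $\Phi\in L^1(\G;\mathcal C_0(\O))$,
\begin{equation*}
U\,{\sf Sch}^\tau_{(\omega)}(\Phi)\,U^*=\int_\G U\,r_{(\omega)}[\rho_{\tau(x)}(\Phi(x))]U^*\,U L(x)U^*d\m(x)={\sf Sch}^\tau_{(\omega')}(\Phi)\,,
\end{equation*}
because the $\tau$-twist only modifies the $\mathcal C_0(\O)$-argument, for which the same intertwining holds. By the universal property of enveloping $C^*$-algebras, the identity extends from $L^1(\G;\mathcal C_0(\O))$ to all of $\mathcal C_0(\O)\!\rtimes^\tau_\rho\!\G$, so $U$ intertwines the two $^*$-representations ${\sf Sch}^\tau_{(\omega)}$ and ${\sf Sch}^\tau_{(\omega')}$ of this crossed product.

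Finally, since $h\in\mathfrak B^\tau_\O$ and ${\sf Op}^\tau_{(\omega)}={\sf Sch}^\tau_{(\omega)}\circ\mathfrak F_\O^{-1}$ by \eqref{tristan}, applying the preceding identity to $\mathfrak F_\O^{-1}(h)$ yields
\begin{equation*}
R(z)\,{\sf Op}^\tau_{(\omega)}(h)\,R(z)^*={\sf Op}^\tau_{(\omega')}(h)\,,
\end{equation*}
proving the unitary equivalence. The only potential pitfall is to handle the $\tau$-modification of the integrated form carefully, but since the right translation $R(z)$ ignores the first variable of $\Phi$ entirely and commutes with $L$, the twist plays no role, and no explicit computation with \eqref{fried} is required.
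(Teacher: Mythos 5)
Your proof is correct and follows essentially the same route as the paper: conjugation by the right translation $R(z)$, verification of the two covariance relations for $r_{(\o)}$ and $L$, passage to the ($\tau$-modified) integrated form, and transfer through $\mathfrak F_\O^{-1}$ via \eqref{tristan}. The paper's own argument is the same, merely phrased as reducing \eqref{cuman} to the intertwining of the covariant representations $\big(r_{(\o)},L,L^2(\G)\big)$ and $\big(r_{(\varrho_z(\o))},L,L^2(\G)\big)$.
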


\begin{proof}
The points $\,\o,\o'$ are on the same orbit if and only if there exists $z\in\G$ such that $\,\o'=\varrho_z(\o)$\,. In terms of the unitary right translation $[R(z)u](\cdot):=u(\cdot z)$\,, the operatorial covariance relation
\begin{equation}\label{cuman}
R(z){\sf Op}^\tau_{(\o)}(h)R(z)^*={\sf Op}^\tau_{(\varrho_z(\o))}(h)
\end{equation}
follows by an easy but formal calculation relying on \eqref{fried}. This can be upgraded to a rigorous justification by a density argument, but it is better to argue as follows:
Formula \eqref{cuman} for arbitrary $h\in\mathfrak B^\tau_\O$ is equivalent to 
\begin{equation*}\label{cruman}
R(z)\,{\sf Sch}^\tau_{(\o)}(\Phi)R(z)^*={\sf Sch}^\tau_{(\varrho_z(\o))}(\Phi)\,,\quad\forall\,\Phi\in\mathcal C_0(\O)\!\rtimes^\tau_\rho\!\G\,.
\end{equation*}
Since ${\sf Sch}^\tau_{(\o')}$ is the integrated form of the covariant representation $\big(r_{(\o')},L,L^2(\G)\big)$ indicated in \eqref{avar} and \eqref{avarr}, it is enough to prove
\begin{equation*}\label{priva}
R(z)L(x)R(z)^*=L(x)\,,\quad\forall\,x,z\in\G
\end{equation*}
and 
\begin{equation*}\label{adova}
R(z)\,r_{(\o)}(f)R(z)^*=r_{(\varrho_z(\o))}(f)\,,\quad\forall\,z\in\G\,,\,f\in\mathcal C_0(\O)\,.
\end{equation*}
The first one is trivial. The second one follows from
$$
\begin{aligned}
\big[R(z)\,r_{(\o)}(f)R(z)^*u\big](x)&=\big[r_{(\o)}(f)R(z^{-1})u\big](xz)\\
&=f\big[\varrho_{xz}(\o)\big]\big[R(z^{-1})u\big](xz)\\
&=f\big[\varrho_{x}\big(\varrho_z(\o)\big)\big]u(x)\\
&=\big[r_{(\varrho_z(\o))}(f)u\big](x)\,,
\end{aligned}
$$
completing the proof.
\end{proof}

\begin{Remark}\label{anglu}
{\rm In fact one has 
\begin{equation}\label{gepid}
{\sf Op}^\tau_{(\o)}(h)={\sf Op}^\tau_L\big(h_{(\o)}\big)\,,\quad{\rm with}\quad h_{(\o)}(x,\xi):=h\big(\varrho_{x}(\o),\xi\big)\,.
\end{equation}
This relation supplies another interpretation of the family $\big\{{\sf Op}^\tau_{(\o)}(h)\mid \o\in\Omega\big\}$\,. We can see it as being obtained by applying the left quantization procedure ${\sf Op}_L^\tau$ of the preceding sections to a family $\big\{h_{(\o)}\mid \o\in\Omega\big\}$ of symbols (classical observables) defined in $\G\times\wG$\,, associated through the action $\varrho$ to a single function $h$ on $\O\times\wG$\,.  Note that this family satisfies {\it the covariance condition} 
\begin{equation}\label{iut}
h_{\varrho_z(\o)}(x,\xi)=h_{(\o)}(xz,\xi)\,,\quad x,z\in\G\,,\,\xi\in\wG\,,\,\o\in\Omega\,.
\end{equation}
Using the reinterpretation \eqref{gepid}, the unitary equivalence \eqref{cuman} can be reformulated only in terms of the quantization ${\rm Op}^\tau_L$ as
\begin{equation*}\label{cumanian}
R(z)\,{\sf Op}^\tau_L\big(h_{(\o)}\big)R(z)^*={\sf Op}^\tau_L\big(h_{(\varrho_z(\o))}\big)\,,
\end{equation*}
which is easily proved directly using relation \eqref{iut} if $h$ is not too general.
}
\end{Remark}

We recall that {\it a quasi-orbit} for the action $\varrho$ is the closure of an orbit. If ${\sf O}_\o:=\varrho_\G(\o)$ is the orbit of the point $\o\in\O$\,, we denote by $\,{\sf Q}_\o:=\overline{{\sf O}}_\o=\overline{\varrho_\G(\o)}$\, {\it the quasi-orbit generated by} $\o$\,. As a preparation for Theorem \ref{peceneg}, we decompose the correspondance $\Phi\mapsto{\sf Sch}^\tau_\o(\Phi)$ into several parts. The starting point is the chain
\begin{equation*}\label{pair}
\mathcal C_0(\O)\overset{\gamma_\o}{\longrightarrow}\mathcal C_0({\sf Q}_{\o})\overset{\beta_\o}{\longrightarrow}\mathcal{LUC}_{\rm u}(\G)\,,
\end{equation*}
involving the restriction $^*$-morphism
$$
\gamma_{\o}:\mathcal C_0(\O)\to\mathcal C_0({\sf Q}_{\o})\,,\quad\gamma_{\o}(f):=f|_{{\sf Q}_{\o}}
$$
and the composition $^*$-morphism
$$
\beta_{\o}:\mathcal C_0({\sf Q}_{\o})\to\mathcal{LUC}_{\rm u}(\G)\,,\quad\big[\beta_{\o}(g)\big](x):=g\big[\varrho_x(\o)\big]\,.
$$
Note that $\beta_\o$ is injective, since $\varrho_\G(\o)$ is dense in ${\sf Q}_{\o}$\,.
Both these $^*$-morphisms are equivariant in the sense of Remark \ref{mamsatu} if on $\mathcal C_0(\O)$ one has the action $\rho$\,, on $\mathcal C_0({\sf Q}_{\o})$ its obvious restriction and on $\mathcal{LUC}_{\rm u}(\G)$ the action $\th$ of $\G$ by left translations, as in Subsection \ref{firea}. Correspondingly, one gets the chain
\begin{equation*}\label{bair}
\mathcal C_0(\O)^\rtimes\overset{\gamma^\rtimes_\o}{\longrightarrow}\mathcal C_0({\sf Q}_{\o})^\rtimes\overset{\beta^\rtimes_\o}{\longrightarrow}\mathcal{LUC}_{\rm u}(\G)^\rtimes\overset{{\sf Sch}_L^\tau}{\longrightarrow}\mathbb B\big[L^2(\G)\big]\,.
\end{equation*}
We indicated crossed products of the form $\mathcal B\!\rtimes^\tau\!\G$ by $\mathcal B^\rtimes$ (leaving the actions unnoticed) and the $^*$-morphism 
$\delta^\rtimes$ acting between crossed products is deduced canonically from an equivariant $^*$-morphism $\delta$ by the procedure described in Remark \ref{mamsatu}.
The arrow ${\sf Sch}^\tau_L$ is just the left Schr\"odinger representation of Subsection \ref{firea}. It is easy to check that
\begin{equation}\label{starc}
{\sf Sch}_L^\tau\circ\beta_\o^\rtimes\circ\gamma_\o^\rtimes={\sf Sch}^\tau_{(\o)}\,,
\end{equation}
which also leads to recapturing \eqref{gepid} after a partial Fourier transformation.

\medskip
Note that some points $\,\o\in{\sf Q}_{\o'}$ could generate strictly smaller quasi-orbits ${\sf Q}_{\o}\subset{\sf Q}_{\o'}$\,. On the other hand a quasi-orbit can be generated by points belonging to different orbits, so Proposition \ref{pecenieg} is not enough to prove the following result.

\begin{Theorem}\label{peceneg}
Suppose that the group $\G$ is admissible and amenable and that $h\in\mathfrak B^\tau_\O$\,.
\begin{enumerate}
\item
If $\,\o,\o'$ generate the same $\varrho$-quasi-orbit, then ${\sf Op}^\tau_{(\o)}(h)$ and ${\sf Op}^\tau_{(\o')}(h)$ have the same spectrum.
\item
If $\,(\O,\varrho,\G)$ is a minimal dynamical system then all the operators ${\sf Op}^\tau_{(\o)}(h)$ have the same spectrum.
\item
Assume that $\O$ is compact and metrizable and endowed with a Borel probability measure $\mu$ which is $\varrho$-invariant and ergodic. Then the topological support $\,{\rm supp}(\mu)$ is a $\varrho$-quasi-orbit and one has $\mu\big[\big\{\o\in\O\mid \overline{\sf O}_\o={\rm supp}(\mu)\big\}\big]=1$\,. The operators ${\sf Op}^\tau_{(\o)}(h)$ corresponding to points generating this quasi-orbit have all the same spectrum; in particular $\,{\sf sp}\big[{\sf Op}^\tau_{(\o)}(h)\big]$ is constant $\mu$-a.e. 
\end{enumerate}
\end{Theorem}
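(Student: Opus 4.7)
The plan rests on the factorisation identity \eqref{starc}. Writing $\Phi:=\mathfrak F_\O^{-1}(h)\in\mathcal C_0(\O)\!\rtimes^\tau_\rho\!\G$, we have
\[
{\sf Op}^\tau_{(\o)}(h)={\sf Sch}^\tau_{(\o)}(\Phi)={\sf Sch}^\tau_L\!\big(\beta_\o^\rtimes(\gamma_\o^\rtimes(\Phi))\big).
\]
Two injectivities will be decisive. First, $\beta_\o:\mathcal C_0({\sf Q}_\o)\to\mathcal{LUC}_{\rm b}(\G)$ is injective, since a continuous function on the quasi-orbit ${\sf Q}_\o$ that vanishes on the dense orbit ${\sf O}_\o$ is identically zero. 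By Remark \ref{mamsatu}, the induced morphism $\beta_\o^\rtimes$ on crossed products is then also injective. Second, amenability of $\G$ combined with Proposition \ref{danda} makes ${\sf Sch}^\tau_L$ faithful on $\mathcal{LUC}_{\rm b}(\G)\!\rtimes^\tau_\th\!\G$. Consequently the composition ${\sf Sch}^\tau_L\!\circ\beta_\o^\rtimes:\mathcal C_0({\sf Q}_\o)\!\rtimes^\tau_\rho\!\G\to\mathbb B[L^2(\G)]$ is an injective $^*$-morphism, hence preserves spectra.

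\medskip
For part 1, this yields
\[
{\rm sp}\bigl[{\sf Op}^\tau_{(\o)}(h)\bigr]={\rm sp}_{\mathcal C_0({\sf Q}_\o)\rtimes^\tau_\rho\G}\!\bigl[\gamma_\o^\rtimes(\Phi)\bigr].
\]
If ${\sf Q}_\o={\sf Q}_{\o'}$, the restriction morphisms $\gamma_\o$ and $\gamma_{\o'}$ coincide (same domain $\mathcal C_0(\O)$, same codomain $\mathcal C_0({\sf Q}_\o)=\mathcal C_0({\sf Q}_{\o'})$, same action), and therefore so do $\gamma_\o^\rtimes(\Phi)$ and $\gamma_{\o'}^\rtimes(\Phi)$. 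The two right hand sides agree, proving part~1. Part~2 is immediate: in a minimal dynamical system every orbit is dense, so every quasi-orbit equals $\O$, and part~1 applies to all pairs $(\o,\o')$ in $\O$.

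\medskip
For part 3, the closure and $\varrho$-invariance of ${\rm supp}(\mu)$ follow from $\varrho$-invariance of $\mu$. To see that ${\rm supp}(\mu)$ is actually a quasi-orbit and that its generators form a set of full $\mu$-measure, fix a countable basis $\{V_n\}_{n\in\N}$ of the relative topology on ${\rm supp}(\mu)$. For each $n$, the set $U_n:=\{\o\in{\rm supp}(\mu)\mid{\sf O}_\o\cap V_n\ne\emptyset\}$ is $\varrho$-invariant and contains $V_n$, so it has positive $\mu$-measure; by ergodicity it has $\mu$-measure one. The countable intersection $\bigcap_n U_n$ is then of full measure and consists precisely of those $\o$ whose orbit meets every non-empty relatively open subset of ${\rm supp}(\mu)$, i.e.\ of points with $\overline{{\sf O}_\o}={\rm supp}(\mu)$. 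In particular ${\rm supp}(\mu)$ is a quasi-orbit, and applying part~1 to all such generators gives the $\mu$-a.e.\ constancy of the spectrum.

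\medskip
The only non-routine ingredient is the reduction, via \eqref{starc} and the two injectivities, of spectral equality for ${\sf Sch}^\tau_{(\o)}(\Phi)$ to spectral equality for $\gamma_\o^\rtimes(\Phi)$ inside the quasi-orbit crossed product. This is the step where amenability is genuinely used: without it, ${\sf Sch}^\tau_L$ only captures the reduced crossed product, and the argument would need to be rephrased in terms of the reduced norm on $\mathcal C_0({\sf Q}_\o)\!\rtimes^\tau_\rho\!\G$. Everything else (the ergodic density argument, minimality $\Rightarrow$ quasi-orbit $=\O$) is standard topological dynamics.
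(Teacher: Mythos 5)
Your proof is correct and follows essentially the same route as the paper: factor ${\sf Op}^\tau_{(\o)}(h)$ through \eqref{starc}, use injectivity of $\beta_\o^\rtimes$ (density of the orbit in the quasi-orbit) together with faithfulness of ${\sf Sch}^\tau_L$ under amenability, and reduce everything to the element $\gamma_\o^\rtimes(\Phi)$ of the quasi-orbit crossed product. The only deviations are harmless: the paper proves the slightly stronger monotonicity statement (${\sf Q}_\o\subset{\sf Q}_{\o'}$ implies spectral inclusion, via the intermediate restriction morphism $\gamma_{\o'\!,\o}^\rtimes$) where you simply note that equal quasi-orbits give the same element $\gamma_\o^\rtimes(\Phi)=\gamma_{\o'}^\rtimes(\Phi)$, and in part 3 you prove the support/full-measure-of-generators fact directly by an ergodicity argument where the paper cites \cite[Lemma 3.1]{BHZ}.
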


\begin{proof}
1. Let us denote by ${\sf Q}_\o:=\overline{\varrho_\G(\o)}$ the quasi-orbit generated by $\o$ and similarly for $\o'$. We show that if ${\sf Q}_\o\subset{\sf Q}_{\o'}$ then $\,{\sf sp}\big[{\sf Op}^\tau_{(\o)}(h)\big]\subset{\sf sp}\big[{\sf Op}^\tau_{(\o')}(h)\big]$ and this clearly implies the statement by changing the role of $\o$ and $\o'$.
Actually, by \eqref{tristan}, under the stated inclusion of quasi-orbits, one needs to show that 
$\,{\sf sp}\big[{\sf Sch}^\tau_{(\o)}(\Phi)\big]\subset{\sf sp}\big[{\sf Sch}^\tau_{(\o')}(\Phi)\big]$ for every element $\Phi$ of the crossed product 
$\mathcal C_0(\O)\!\rtimes^\tau_\rho\!\G\,.$

The basic idea, trivial consequence of the definitions, is the following: If $\,\Upsilon:\CC'\rightarrow\CC$ is a $^*$-morphism between two $C^*$-algebras and $g'$ is an element of $\CC'$, then $\,{\sf sp}[\Upsilon(g')\!\mid\!\CC]\subset{\sf sp}\big[g'\!\mid\!\CC'\big]$\,, and we have equality of spectra if $\Upsilon$ is injective. The notation indicates the $C^*$-algebra in which each spectrum is computed.

In our case, by \eqref{starc}, one can write 
\begin{equation*}\label{sfirc}
{\sf Sch}^\tau_{(\o)}(\Phi)=\big[{\sf Sch}_L^\tau\!\circ\beta^\rtimes_\o\big]\big[\gamma^\rtimes_\o(\Phi)\big]\quad{\rm and}\quad{\sf Sch}^\tau_{(\o')}(\Phi)=\big[{\sf Sch}^\tau_L\circ\beta^\rtimes_{\o'}\big]\big[\gamma^\rtimes_{\o'}(\Phi)\big]\,.
\end{equation*}
Since $\G$ is amenable ${\sf Sch}^\tau_L$ is injective and, as remarked before, $\beta^\rtimes_{\o}$ and $\beta^\rtimes_{\o}$ are always injective. Thus we are left with proving that 
\begin{equation}\label{sfurc}
{\sf sp}\big[\gamma^\rtimes_{\o}(\Phi)\!\mid\!\mathcal C_0({\sf Q}_{\o})^\rtimes\big]\subset{\sf sp}\big[\gamma^\rtimes_{\o'}(\Phi)\!\mid\!\mathcal C_0({\sf Q}_{\o})^\rtimes\big]\,,
\end{equation} 
assuming the inclusion ${\sf Q}_\o\subset{\sf Q}_{\o'}$ of quasi-orbits. We use now
$$
\Upsilon\equiv\gamma_{\o'\!,\o}^\rtimes:\mathcal C_0({\sf Q}_{\o'})\!\rtimes^\tau_\rho\!\G\to\mathcal C_0({\sf Q}_{\o})\!\rtimes^\tau_\rho\!\G\,,
$$
which is obtained by applying the functorial construction of Remark \ref{mamsatu} to the covariant restriction $^*$-morphism
$$
\gamma_{\o'\!,\o}:\mathcal C_0({\sf Q}_{\o'})\to\mathcal C_0({\sf Q}_{\o})\,,\quad\gamma_{\o'\!,\o}(f):=f|_{{\sf Q}_{\o}}\,.
$$
Note that $\,\gamma_{\o}=\gamma_{\o'\!,\o}\circ\gamma_{\o'}$ (succesive restrictions), which functorially implies $\,\gamma_{\o}^\rtimes=\gamma_{\o'\!,\o}^\rtimes\circ\gamma_{\o'}^\rtimes$\,. Then $\gamma_{\o}^\rtimes(\Phi)=\gamma_{\o'\!,\o}^\rtimes\big[\gamma_{\o'}^\rtimes(\Phi)\big]$ and \eqref{sfurc} and thus  the result follows.

\medskip
2. In a minimal dynamical system, by definition, all the orbits are dense. Thus any point generates the same single quasi-orbit ${\sf Q}=\Omega$ and one applies 1.

\medskip
3. The statement concerning the properties of $\,{\rm supp}(\mu)$ is contained in \cite[Lemma 3.1]{BHZ}. Then the spectral information follows applying 1. once again.
\end{proof}

The final point of Theorem \ref{peceneg} treats ``a random Hamiltonian of pseudo-differential type". Almost everywhere constancy of the spectrum in an ergodic random setting is a familiar property proved in many other situations \cite{CL,PF}. But note that a precise statement about the family of points giving the almost sure spectrum is available above.

\section{The case of nilpotent Lie groups}\label{firtanun}

We now give the application of the introduced construction to the case of nilpotent Lie groups.
Two previous main approaches seem to exist here. The first one uses the fact that, since the exponential mapping is a 
global diffeomorphism, one can introduce classes of symbols and the symbolic calculus on the group from
the one on its Lie algebra. This allows for operators on a nilpotent Lie group $\G$ to have 
scalar-valued symbols which can be interpreted as functions on the dual $\mathfrak g'$ of its Lie algebra.
Such approach becomes effective mostly for invariant operators on general nilpotent Lie groups
\cite{Melin, Glo1, Glo2}, see also \cite{Ta} for the case of the Heisenberg group. The second approach
applies also well to noninvariant operators on $\G$ and leads to operator-valued symbols, as developed
in \cite{FR, FR1}. This is also a special case (with $\tau(\cdot)={\sf e}$) of $\tau$-quantizations developed in this paper. 

We now extend both approaches to $\tau$-quantizations with the link between them provided in Remark \ref{link}.

\subsection{Some more Fourier transformations}\label{anun}

Let us suppose that $\G$ is a nilpotent Lie group with unit $\e$ and Haar measure ${\sf m}$\,; it will also be assumed connected and simply connected. Such a group is unimodular, second countable and type I, so it fits in our setting and all the previous constructions and statements hold.

Let $\mathfrak g$ be the Lie algebra of $\G$ and $\mathfrak g'$ its dual. If $Y\in\mathfrak g$ and $X'\in\mathfrak g'$ we set $\<Y\!\!\mid\!\!X'\>:=X'(Y)$\,. We shall develop further the theory in this nilpotent setting, but only to the extent the next two basic properties are used:
\begin{enumerate}
\item
the exponential map $\exp:\mathfrak g\rightarrow\G$ is a diffeomorphism, with inverse $\log:\G\rightarrow\mathfrak g$\,, \cite[Th. 1.2.1]{CG};
\item
under $\exp$ the Haar measure on $\G$ corresponds to the Haar measure $dX$ on $\g$ (normalised accordingly), cf \cite[Th. 1.2.10]{CG};
\end{enumerate}

It follows from the properties above that $L^p(\G)$ is isomorphic to $L^p(\mathfrak g)$\,. Actually, for each $p\in[1,\infty]$\,, one has a surjective linear isometry
\begin{equation*}\label{gupta}
 L^p(\G)\overset{{\rm Exp}}{\longrightarrow} L^p(\mathfrak g)\,,\quad {\rm Exp}(u):=u\circ\exp
 \end{equation*}
 with inverse
\begin{equation*}\label{dasgupta}
 L^p(\mathfrak g)\overset{{\rm Log}}{\longrightarrow} L^p(\G)\,,\quad {\rm Log}({\bf u}):={\bf u}\circ\log\,.
 \end{equation*} 

There is a unitary Fourier transformation $\,\mathcal F:L^2(\mathfrak g)\rightarrow L^2(\g')$ associated to the duality $\<\cdot\!\mid\!\cdot\>:\g\times\g'\rightarrow\mathbb R$\,. It is defined by
\begin{equation*}\label{clata}
(\mathcal F \mathbf u)(X'):=\int_{\g}e^{-i\<X\mid X'\>} \mathbf u(X)dX\,,
\end{equation*}
with inverse given (for a suitable normalization of $dX'$) by
\begin{equation*}\label{clatta}
(\mathcal F^{-1}\mathbf u')(X):=\int_{\g'}\!e^{i\<X\mid X'\>} \mathbf u'(X')dX'.
\end{equation*}

Now composing with the mappings ${\rm Exp}$ and ${\rm Log}$ one gets unitary Fourier transformations
\begin{equation*}\label{fericire}
\mathbf F:=\mathcal F\circ{\rm Exp}:L^2(\G)\rightarrow L^2(\g')\,,\quad \mathbf F^{-1}:={\rm Log}\circ\mathcal F^{-1}:L^2(\g')\rightarrow L^2(\G)\,,
\end{equation*}
the second one being the inverse of the first. They are defined essentially by
\begin{equation*}\label{qlata}
(\mathbf F u)(X')=\int_{\g}e^{-i\<X\mid X'\>}u(\exp X)dX=\int_\G e^{-i\<\log x\mid X'\>} u(x)d\m(x)\,,
\end{equation*}
\begin{equation*}\label{qlatta}
(\mathbf F^{-1}\mathbf u')(x)=\int_{\g'}\!e^{i\<\log x\mid X'\>}\mathbf u'(X')dX'.
\end{equation*}

Recalling Plancherel's Theorem for unimodular second countable type I groups, one gets finally a commuting diagram of unitary transformations
$$
\begin{diagram}
\node{L^2(\G)} \arrow{e,t}{{\rm Exp}} \arrow{s,l}{\fscr F}\arrow{se,r}{\mathbf F}\node{L^2(\g)}\arrow{s,r}{{\mathcal F}}\\ 
\node{\mathscr B^2(\wG)} \arrow{e,b}{{\mathfrak I}} \node{L^2(\g')}
\end{diagram}
$$
The lower horizontal arrow is defined as $\,\mathfrak I:=\mathbf F\circ{\fscr F}^{-1}=\mathcal F\circ{\rm Exp}\circ{\fscr F}^{-1}$
and is given explicitly on $\mathscr B^1(\wG)\cap\mathscr B^2(\wG)$ by
\begin{equation*}\label{feri}
(\mathfrak I \phi)(X')=\int_\G\int_{\wG}  e^{-i\<\log x\mid X'\>}{\rm Tr}_\xi\big[ \phi(\xi)\xi(x)\big]d\m(x)d\wm(\xi)\,.
\end{equation*}

\begin{Remark}\label{nula}
{\rm If $\G=\mathbb R^n$ it is possible, by suitable interpretations, to identify $\G\sim\wG$ with $\g$ and with $\g'$ (as vector spaces) and then the three Fourier transformations ${\fscr F}\,,\mathcal F$ and $\mathbf F$ will concide and $\mathfrak I$ will become the identity.}
\end{Remark}

\subsection{A quantization by scalar symbols on nilpotent Lie groups}\label{tanun}

To get pseudo-differential operators one could start, as in Subsection \ref{firea}, with a $C^*$-dynamical system $(\A,\th,\G)$ where $\A$ is a $C^*$-algebra of bounded left-uniformly continuous functions on $\G$ which is invariant under the action $\theta$ by left translations. We compose the left Schr\"odinger representation \eqref{rada} with the inverse of the
partial Fourier transform
\begin{equation*}\label{fanel}
{\sf id}\otimes\mathbf F:(L^1\cap L^2)(\G;\A)\rightarrow \A\otimes L^2(\g')\,,
\end{equation*}
finding the pseudo-differential representation 
\begin{equation}\label{dia}
\mathbf{Op}^\tau_L:={\sf Sch}_L^\tau\circ({\sf id}\otimes\mathbf F^{-1})={\sf Int}\circ{\sf CV}_L^\tau\!\circ\big({\sf id}\otimes{\mathbf F}^{-1}\big)
\end{equation}
which can afterwards be extended to the relevant enveloping $C^*$-algebra.
 One gets 
\begin{equation}\label{dulcisor}
\big[\mathbf{Op}^\tau_L({\bf s})u\big]\!(x)=\int_\G\!\int_{\g'}\!e^{i\<\log(xy^{-1})\mid X'\>}{\bf s}\big(\tau(xy^{-1})^{-1}x,X'\big)u(y)\,d\m(y)dX'\,,
\end{equation}
so $\mathbf{Op}^\tau_L({\bf s})$ is an integral operator with kernel $\,\mathbf{Ker}^\tau_{({\bf s})}:\G\times\G\rightarrow\mathbb C$ given by
\begin{equation*}\label{acrisor}
\mathbf{Ker}_{({\bf s})}^\tau(x,y)=\int_{\g'}\!e^{i\<\log(xy^{-1})\mid X'\>}{\bf s}\!\left(\tau(xy^{-1})^{-1}x,X'\right)dX'\,.
\end{equation*}
Examining this kernel, or using directly \eqref{dia}, one sees that \eqref{dulcisor} also defines a unitary mapping 
\begin{equation*}\label{abel}
\mathbf{Op}_L^\tau\!:L^2(\g'\times\G)\rightarrow\mathbb B^2\!\[L^2(\G)\].
\end{equation*}

Actually there is a Weyl system on which the construction of pseudo-differential operators with symbols ${\bf s}:\g'\times\G\rightarrow\mathbb C$ can be based:

\begin{Definition}\label{sigmund}
For $(x,X')\in\G\times\g'$ one defines a unitary operator $\mathbf W^\tau_L(x,X')$ in $L^2(\G)$ by
\begin{equation*}\label{friedar}
\begin{aligned}
\[\mathbf W^\tau_L(x,X')u\]\!(z):&=e^{i\<\log\[\tau(x)^{-1}z\]\mid X'\>}u(x^{-1}z)\\
&=e^{i\<\log\[\tau (x)^{-1}z\]\mid X'\>}[L(x)u](z)\,.
\end{aligned}
\end{equation*}
\end{Definition}

By direct computations, one shows the following

\begin{Lemma}\label{calcul}
Let us denote by $Q$ the operator of multiplication by the variable in $L^2(\G)$\,. For any pairs $(x,X'),(y,Y')\in\G\times\g'$ one has
\begin{equation*}\label{fulppe}
\mathbf W^\tau_L(x,X')\,\mathbf W^\tau_L(y,Y')=\Upsilon^\tau\big[(x,X'),(y,Y');Q\big]\,\mathbf W^\tau_L(xy,X'+Y')\,,
\end{equation*}
where $\Upsilon^\tau\big[(x,X'),(y,Y');Q\big]$ is the operator of multiplication by the function
\begin{equation*}\label{sireata}
\begin{aligned}
z\mapsto\Upsilon^\tau\big[(x,X'),(y,Y');z\big]&=\exp\Big\{i\Big[\<\,\log\!\big[\tau (x)^{-1}z\big]-\log\!\big[\tau(xy)^{-1}z\big]\mid X'\,\>-\\
&-\<\,\log\!\big[\tau(xy)^{-1}z\big]-\log\!\big[\tau(y^{-1})x^{-1}z\big]\mid Y'\,\>\Big]\Big\}\,.
\end{aligned}
\end{equation*}
\end{Lemma}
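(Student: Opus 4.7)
\medskip
\noindent\textbf{Proof proposal.} The lemma is a direct computational verification, so the plan is simply to apply the two operators in sequence, collect the resulting phase factors, and compare with the expression for $\mathbf W^\tau_L(xy,X'+Y')$. No conceptual input beyond the definition in Definition~\ref{sigmund} is needed; the only delicate point is bookkeeping with the arguments of $\tau$ and $\log$.

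First I would apply $\mathbf W^\tau_L(y,Y')$ to a vector $u\in L^2(\G)$ and then $\mathbf W^\tau_L(x,X')$ to the result. Using the definition twice gives, for every $z\in\G$,
\begin{equation*}
\big[\mathbf W^\tau_L(x,X')\mathbf W^\tau_L(y,Y')u\big](z)
= e^{i\<\log[\tau(x)^{-1}z]\mid X'\>}\,e^{i\<\log[\tau(y)^{-1}x^{-1}z]\mid Y'\>}\,u(y^{-1}x^{-1}z),
\end{equation*}
where the second exponent comes from evaluating $\mathbf W^\tau_L(y,Y')u$ at the point $x^{-1}z$.

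Next I would rewrite $u(y^{-1}x^{-1}z)=u\bigl((xy)^{-1}z\bigr)$ and insert the factor $e^{i\<\log[\tau(xy)^{-1}z]\mid X'+Y'\>}e^{-i\<\log[\tau(xy)^{-1}z]\mid X'+Y'\>}$, so that the product can be read as $\mathbf W^\tau_L(xy,X'+Y')u$ evaluated at $z$, multiplied by a residual scalar phase depending on $z$. Splitting that residual phase into its $X'$-part and its $Y'$-part gives precisely
\begin{equation*}
\exp\Big\{i\Big[\<\log[\tau(x)^{-1}z]-\log[\tau(xy)^{-1}z]\mid X'\>+\<\log[\tau(y)^{-1}x^{-1}z]-\log[\tau(xy)^{-1}z]\mid Y'\>\Big]\Big\},
\end{equation*}
which matches the expression for $\Upsilon^\tau[(x,X'),(y,Y');z]$ displayed in the statement (after rewriting the $Y'$-part as $-\<\log[\tau(xy)^{-1}z]-\log[\tau(y)^{-1}x^{-1}z]\mid Y'\>$). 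Since the resulting $z$-dependent factor multiplies $u$ pointwise, it defines the multiplication operator $\Upsilon^\tau[(x,X'),(y,Y');Q]$, and the asserted identity follows.

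I do not anticipate any genuine obstacle: unitarity of each $\mathbf W^\tau_L$ is clear from Definition~\ref{sigmund} since $|e^{i\<\,\cdot\,\mid X'\>}|=1$ and $u\mapsto u(x^{-1}\cdot)$ is unitary by unimodularity, and the whole lemma is purely algebraic. The only mild subtlety worth noting is that $\Upsilon^\tau$ is in general not a scalar but a multiplication operator depending on $Q$, so $\mathbf W^\tau_L$ is a projective representation of $\G\times\mathfrak g'$ with an \emph{operator-valued} multiplier; this is the analogue, in the nilpotent setting and with the quantization parameter $\tau$, of the observation made in Remark~\ref{stupoare} for the general Weyl system $W^\tau$.
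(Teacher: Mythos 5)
Your proof is correct and is exactly the direct computation the paper has in mind (the paper itself offers nothing more than ``by direct computations''). One point deserves attention, though: the phase produced by the second factor is $\<\log[\tau(y)^{-1}x^{-1}z]\mid Y'\>$, involving the group inverse of $\tau(y)$, whereas the printed statement of the lemma has $\tau(y^{-1})x^{-1}z$, i.e. $\tau$ evaluated at $y^{-1}$; these agree only for special choices of $\tau$ (for instance $\tau(\cdot)=\e$, $\tau={\rm id}_\G$, or $\tau(w)=\exp[t\log w]$, in particular the midpoint map $\si$), not for a general measurable $\tau$. Since Definition~\ref{sigmund} unambiguously yields $\tau(y)^{-1}$, your formula is the correct one, so instead of asserting that your expression ``matches'' the displayed $\Upsilon^\tau$ you should state explicitly that the statement's $\tau(y^{-1})$ should read $\tau(y)^{-1}$ (a typo in the paper); as written, your claim of a literal match glosses over this discrepancy.
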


\begin{Remark}\label{strasnic}
{\rm The family $\mathcal C(\G;\T)$ of all continuous functions on $\G$ with values in the torus is a Polish group and the  mapping $\,\Upsilon:(\G\times\g')\times(\G\times\g')\rightarrow \mathcal C(\G;\T)$ can be seen as a $2$-cocycle. We are not going to pursue here the cohomological meaning and usefulness of these facts.}
\end{Remark}

In terms of the Weyl system $\big\{\mathbf W^\tau_L(x,X')\mid (x,X')\in\G\times\mathfrak g'\big\}$ one can write 
\begin{equation}\label{stephann}
\mathbf{Op}^\tau_L({\bf s}):=\int_{\G}\!\int_{\g'}\,\widetilde{\bf s}(X',x)\mathbf W^\tau_L(x,X')\,d\m(x)dX'\,;
\end{equation}
we used the notation $\,\widetilde{\bf s}:=({\bf F}\otimes{\bf F}^{-1}){\bf s}$\,. 
The technical details are similar but simpler than those in Subsection \ref{teletin} and are left to the reader.

\begin{Remark}\label{link}
{\rm One also considers the composition $\sharp_\tau$ defined to satisfy the equality
$\mathbf{Op}^\tau_L({\bf r}\,\sharp_\tau\,{\bf s})=\mathbf{Op}^\tau_L({\bf r})\,\mathbf{Op}^\tau_L({\bf s})\,,$ 
as well as the involution $^{\sharp_\tau}$ verifying $\mathbf{Op}^\tau_L({\bf s}^{\sharp_\tau})=\mathbf{Op}^\tau_L({\bf s})^*.$ 
Then $\big(L^2(\g'\times\G),\sharp_\tau,^{\sharp_\tau}\big)$ will be a $^*$-algebra. It is isomorphic to the $^*$-algebra $\big(\mathscr B^2(\wG\times\G),\#_\tau,^{\#_\tau}\big)$ defined in Subsection \ref{fraterin}.
Actually one has the following commutative diagram of isomorphisms:
$$
\begin{diagram}
\node{L^2(\G)\otimes L^2(\G)} \arrow{se,r}{\sf Sch_L^\tau}\arrow{e,t}{{\sf id}\otimes{\fscr F}} \arrow{s,l}{{\sf id}\otimes\mathbf F}\node{L^2(\G)\otimes\mathscr B^2(\wG)}\arrow{s,r}{{\sf Op}^\tau_L}\\ 
\node{L^2(\G)\otimes L^2(\mathfrak g')} \arrow{e,b}{{\mathbf{Op}^\tau_L}} \node{\mathbb B^2[L^2(\G)]}
\end{diagram}
$$
One justifies this diagram by comparing \eqref{dia} with \eqref{surprize}.
The conclusion of this diagram is that for simply connected nilpotent Lie groups the ``operator-valued pseudo-differential calculus" ${\sf Op}^\tau_L$ with symbols defined on $\G\times\wG$ can be obtained from the ``scalar-valued pseudo-differential calculus" $\mathbf{Op}^\tau_L$ (which provides a quantization on the cotangent bundle $\G\times\mathfrak g'\cong T'(\G)$) just by composing at the level of symbols with the isomorphism 
$({\sf id}\otimes{\mathbf F})\circ({\sf id}\otimes{\fscr F})^{-1}={\sf id}\otimes\big(\mathbf F\circ{\fscr F}^{-1}\big)$\,.}
\end{Remark}

\begin{Remark}\label{streasina}
{\rm In Prop. \ref{zponentialaa} we have shown that a connected simply connected nilpotent Lie group $\G$ admits a symmetric quantization, corresponding to the map $\tau=\si$ given by \eqref{clantaa} globally defined. With this choice one also has $\,{\bf s}^{\sharp_{\si}}\!(x,X')=\overline{{\bf s}(x,X')}\,$ for every $(x,X')\in\G\times\mathfrak g'$\,.
}
\end{Remark}

\begin{Remark}\label{nilp-right}
{\rm A right quantization ${\mathbf{Op}^\tau_R}$ with scalar symbols is also possible; for completeness, we list the 
main quantization formula
\begin{equation}\label{dia-r}
\mathbf{Op}^\tau_R:={\sf Sch}_R^\tau\circ({\sf id}\otimes\mathbf F^{-1})\equiv{\sf Int}\circ{\sf CV}_R^\tau\!\circ\big({\sf id}\otimes{\mathbf F}^{-1}\big)\,,
\end{equation}
where ${\sf CV}_R^\tau$ is the change of variables given by the composition with the mapping
\begin{equation}\label{religie-r}
{\rm cv}^\tau\equiv{\rm cv}^\tau_R:\G\times\G\rightarrow\G\times\G\,,\quad {\rm cv}_R^\tau(x,y):=\big(x\tau(y^{-1}x)^{-1}\!,y^{-1}x\big)\,,
\end{equation}
see also \eqref{religie}. 
Here, ${\sf Sch}_R^\tau:={\sf Int}\circ{\sf CV}_R^\tau\,$ also allows for an integrated interpretation similar to 
\eqref{SCHL}.
More explicitly, ${\mathbf{Op}^\tau_R}$ can be written as
\begin{equation}\label{dulcisor-r}
\big[\mathbf{Op}^\tau_R({\bf s})u\big]\!(x)=\int_\G\!\int_{\g'}\!e^{i\<\log(y^{-1}x)\mid X'\>}{\bf s}\big(x\tau(y^{-1}x)^{-1},X'\big)u(y)\,d\m(y)dX'\,,
\end{equation}
so $\mathbf{Op}^\tau_R({\bf s})$ is an integral operator with kernel $\,\mathbf{Ker}^\tau_{({\bf s}),R}:\G\times\G\rightarrow\mathbb C$ given by
\begin{equation*}\label{acrisor-r}
\mathbf{Ker}_{({\bf s}),R}^\tau(x,y)=\int_{\g'}\!e^{i\<\log(y^{-1}x)\mid X'\>}{\bf s}\!\left(x\tau(y^{-1}x)^{-1},X'\right)dX'\,.
\end{equation*}
Consequently, we have the commutative diagram of isomorphisms of $H^*$-algebras
$$
\begin{diagram}
\node{L^2(\G)\otimes L^2(\G)} \arrow{se,r}{\sf Sch_R^\tau}\arrow{e,t}{{\sf id}\otimes{\fscr F}} \arrow{s,l}{{\sf id}\otimes\mathbf F}\node{L^2(\G)\otimes\mathscr B^2(\wG)}\arrow{s,r}{{\sf Op}^\tau_R}\\ 
\node{L^2(\G)\otimes L^2(\mathfrak g')} \arrow{e,b}{{\mathbf{Op}^\tau_R}} \node{\mathbb B^2[L^2(\G)]}
\end{diagram}
$$
where all the listed mappings in this diagram are unitary, and where ${{\sf Op}^\tau}={{\sf Op}^\tau_R}$ is the $\tau$-quantization
formula \eqref{bilfred} that we have started with, and so
$$
{{\sf Op}^\tau}={{\sf Op}^\tau_R}=\mathbf{Op}^\tau_R\circ \[{\sf id}\otimes\big(\mathbf F\circ{\fscr F}^{-1}\big)\].
$$
}
\end{Remark}


Marius M\u antoiu:
  \endgraf
  Departamento de Matem\'aticas, Universidad de Chile
  \endgraf
  Casilla 653, Las Palmeras 3425, Nunoa, Santiago,
  \endgraf
  Chile
    \endgraf
  {\it E-mail address} {\rm mantoiu@uchile.cl}

\medskip

  Michael Ruzhansky:
  \endgraf
  Department of Mathematics, 
  Imperial College London
  \endgraf
  180 Queen's Gate, London SW7 2AZ,
  \endgraf
  United Kingdom
  \endgraf
  {\it E-mail address} {\rm m.ruzhansky@imperial.ac.uk}

\end{document}